\theoremstyle{plain} 
\newtheorem{theorem}{Theorem}[section]
\newtheorem*{theorem*}{Theorem}
\newtheorem{lemma}[theorem]{Lemma}
\newtheorem{corollary}[theorem]{Corollary}          
\newtheorem{proposition}[theorem]{Proposition}              
\theoremstyle{definition}  
\newtheorem{definition}[theorem]{Definition}
\newtheorem{notation}[theorem]{Notation}
\newtheorem{construction}[theorem]{Construction}
\newtheorem*{axiom*}{Axiom}
\newtheorem*{basic-data*}{Basic Data}
\newtheorem*{notation*}{Notation}
\theoremstyle{remark}
\newtheorem{remark}[theorem]{Remark}
\newtheorem*{remark*}{Remark}
\newtheorem{example}[theorem]{Example}
\newtheorem{conjecture}[theorem]{Conjecture}
\numberwithin{equation}{theorem}
\newcommand{\A}{\mathcal{A}}
\newcommand{\B}{\mathcal{B}}
\newcommand{\C}{\mathcal{C}}
\newcommand{\D}{\mathcal{D}}
\newcommand{\E}{\mathcal{E}}
\newcommand{\G}{\mathbb{G}}
\newcommand{\LH}{\mathrm{L}^{\!\!\mathrm{H}}}
\newcommand{\LL}{\mathrm{L}}
\newcommand{\M}{\mathcal{M}}
\newcommand{\NH}{\mathrm{N}^{\mathrm{H}}}
\newcommand{\Q}{\mathcal{Q}}
\newcommand{\R}{\mathcal{R}}
\newcommand{\Z}{\mathbb{Z}}
\newcommand{\coloneq}{\mathrel{\mathop:}=}
\DeclareMathOperator{\Aut}{Aut}
\DeclareMathOperator{\cat}{Cat}
\DeclareMathOperator{\CSS}{CSS}
\DeclareMathOperator{\Fun}{Fun}
\DeclareMathOperator{\uHom}{\underline{Hom}}
\DeclareMathOperator{\id}{id}
\DeclareMathOperator{\Seg}{Seg}
\DeclareMathOperator{\set}{Set}
\DeclareMathOperator{\thy}{Thy}
\DeclareMathOperator{\map}{Map}
\DeclareMathOperator{\pre}{\mathcal{P}}
\DeclareMathOperator{\gaunt}{Gaunt}
\DeclareMathOperator{\corr}{Corr}
\DeclareMathOperator{\precat}{PreCat}
\DeclareMathOperator*{\colim}{colim}
\newcommand{\op}{\mathrm{op}}
\newcommand{\ob}{\mathrm{ob}}
\definecolor{CSPcolor}{rgb}{0.3,0.3,0.75}	
\definecolor{CBcolor}{rgb}{0.5,0.0,0.5}		
\definecolor{Removecolor}{rgb}{0.7,0.3,0.0}		
\title{On the unicity of the theory of higher categories}
\author{Clark Barwick and Christopher Schommer-Pries}
\thanks{The second author was supported by NSF fellowship DMS-0902808.}
\begin{document}

\begin{abstract} We axiomatise the theory of $(\infty,n)$-categories. We prove that the space of theories of $(\infty,n)$-categories is a $B(\Z/2)^n$. We prove that Rezk's complete Segal $\Theta_n$ spaces, Simpson and Tamsamani's Segal $n$-categories, the first author's $n$-fold complete Segal spaces, Kan and the first author's $n$-relative categories, and complete Segal space objects in any model of $(\infty, n-1)$-categories all satisfy our axioms. Consequently, these theories are all equivalent in a manner that is unique up to the action of $(\Z/2)^n$.
\end{abstract}


\maketitle

\tableofcontents

\section{Introduction}

Any model for the theory of $(\infty,n)$-categories must, at a minimum, form an \emph{$\infty$-category} $\C$.\footnote{By this we mean a \emph{quasicategory} in the sense of Michael Boardman and Rainer Vogt, André Joyal, and Jacob Lurie; we freely use the language and technology of Lurie's books \cite{HTT,Lurie-HA}.} Such an $\infty$-category must contain the \emph{gaunt $n$-categories} (Definition \ref{def:gaunt}) as a full subcategory -- these are strict $n$-categories with no nontrivial isomorphisms at any level. In particular, $\C$ contains the \emph{$k$-cells} $C_k$ for $0\leq k\leq n$. These are the strict $n$-categories with the universal property that the set of $k$-morphisms of a strict $n$-category $D$ is the set of functors $C_k\to D$.

In order for the objects of $\C$ to be considered as $(\infty,n)$-categories, they must be built from cells together with composition operations. These composition operations are governed by pasting diagrams, possibly quite general ones. The largest conceivable collection of these pasting diagrams is the class of gaunt $n$-categories itself, so we encode these properties via a pair of axioms:
\begin{enumerate}[{C.}1]
\item Strong generation. Every object of $\C$ is the canonical colimit (in the $\infty$-categorical sense) of the diagram of all the gaunt $n$-categories that map to it.
\item Weak generation. Every object of $\C$ admits a `cell decomposition' -- i.e., it is \emph{some} colimit of cells (again in the $\infty$-categorical sense).
\end{enumerate}

\emph{Correspondences} are configurations of $(\infty,n)$-categories parametrized by cells. These must be well-behaved:
\begin{enumerate}[{C.}3]
\item Internal Homs for correspondences. For any $0\leq k\leq n$, the $\infty$-category of objects of $\C$ over the $k$-cell $C_k$ has internal Homs.
\end{enumerate}

Our pasting diagrams are constructed by repeatedly applying certain gluing operations. These operations must remain colimits when viewed in $\C$. To ensure this, we identify (\ref{Note:FundPushouts}) a finite list $S_{00}$ of pushouts of gaunt $n$-categories, and we introduce another axiom:
\begin{enumerate}[{C.}4]
\item Fundamental pushouts. The image of the diagrams $S_{00}$ are pushouts in $\C$.
\end{enumerate}

Finally, the $\infty$-category $\C$ must be minimal (in a rather weak sense) with these features:
\begin{enumerate}[{C.}5]
\item Versality. If $\D$ is an $\infty$-category that contains the gaunt $n$-categories as a full subcategory and satisfies the axioms above, then there is a left adjoint $K\colon\C\to\D$ and a natural transformation $\eta$ between the restriction of $K$ to the gaunt $n$-categories and the inclusion of the gaunt $n$-categories into $\D$ such that $\eta$ is an equivalence on cells.
\end{enumerate}
An $\infty$-category $\C$ that contains a copy of $\gaunt_n$ and satisfies these axioms is called a \emph{theory of $(\infty,n)$-categories}. In this paper
\begin{itemize}
\item we prove that there is a \emph{unique} theory of $(\infty,n)$-categories, up to equivalence;
\item we prove that, up to the formation of opposites, there is a contractible space of equivalences of the theory of $(\infty,n)$-categories; and
\item we prove that all the best-known purported models of $(\infty,n)$-categories satisfy these axioms and are therefore equivalent in an essentially unique manner.
\end{itemize}

In more detail, the main theorem is:
\begin{theorem}[Unicity] The moduli space $\thy_{(\infty,n)}$ of theories of $(\infty,n)$-categories is a $B(\Z/2)^n$. 
\end{theorem}

Any theory of $(\infty,n)$-categories has internal Homs, and is thus canonically enriched in itself. Results of David Gepner and Rune Haugseng \cite{MR3345192,MR3402334} show that categories enriched in $(\infty,n)$-categories are a model of $(\infty,n+1)$-categories. Thus the unicity theorem for the $\infty$-category of $(\infty,n)$-categories implies the unicity of the $(\infty,n+1)$-category of $(\infty,n)$-categories.

\subsection*{Other axiomatizations of higher categories}
Carlos Simpson \cite[Conjectures 2 and 3]{Simpson01} conjectured a similar unicity result for the theory of $n$-categories.
Simpson suggests ten axioms (Properties 1--10), which are extremely different from those here.
Nevertheless, the kind of unicity that Simpson proposed (and even the idea that one could axiomatize the homotopy theory of higher categories itself) was of course a direct inspiration for our work here.

Bertrand To\"en \cite{Toen} later proved a Unicity Theorem of the kind above for the theory of $(\infty,1)$-categories.
His framework provides seven axioms.
The basic data is that of a homotopy theory $\M$ containing a cosimiplicial \emph{interval object}.
A subset of his axioms (A2, A6, A7) imply that this interval object can be used to define a right adjoint $N$ from $\M$ to the homotopy theory $\CSS$ of complete Segal spaces.
The axioms (A6) is that $N$ is conservative, and the rest of the axioms are used to show that the left adjoint of $N$ is fully faithful, which shows this is an equivalence.
Since $\CSS$ satisfies our axioms for $n=1$ (see \ref{thm:CSSnisathy}), one knows a posteriori that To\"en's axioms and ours specify the same homotopy theory.

One may ask whether this is clear a priori.
It seems not: there doesn't seem to be any simple mechanism by which one could translate To\"en's axioms into ours or vice versa.
While our axioms do have the one point in common that we each require the existence of a well-behaved (presentable) homotopy theory and internal Homs, the similarities end here.
Even the basic data we are axiomatizing is not the same: while To\"en's axioms are precisely adapted to the comparison with $\CSS$, our axioms remain agnostic about the ``shapes'' of the basic objects that are used to generate models of higher categories.

\subsection*{Plan}
This paper is divided into three parts. The first part concerns various aspects of strict $n$-category theory, most particularly including the theory of gaunt $n$-categories. It does not make use of any $\infty$-category theory.

The second part concerns the axiomatization. We first introduce our axioms. Then we show that $\thy_{(\infty,n)}$ is nonempty by explicitly constructing a theory of $(\infty,n)$-categories that satisfies our axioms. We show that any other theory of $(\infty,n)$-categories is equivalent to this given theory -- $\thy_{(\infty,n)}$ is connected. We then compute the based loopspace at the point we constructed -- that is, the space of autoequivalences of the model of $(\infty,n)$-categories. There are $n$ obvious involutions, which are given by forming the opposite at each categorical level; it turns out that up to a contractible space of identifications, these are \emph{all} of the autoequivalences. 


In the third and final part of this paper we prove that most of the purported models of $(\infty,n)$-categories in the literature satisfy our axioms. These include:
\begin{enumerate}
\item Charles Rezk's complete Segal $\Theta_n$-spaces,
\item the $n$-fold complete Segal spaces of the first-named author,
\item André Hirschowitz and Simpson's Segal $n$-categories,
\item the $n$-relative categories of the first-named author and Dan Kan,
\item categories enriched in any internal model category whose underlying homotopy theory is a homotopy theory of $(\infty,n)$-categories,
\item when $n=1$, Boardman and Vogt's quasicategories,
\item when $n=1$, Lurie's marked simplicial sets, and
\item when $n=2$, Lurie's scaled simplicial sets,
\end{enumerate}
Consequently they are all equivalent to our model, in a manner that is unique up to the formation of the opposites at the various levels. This also confirms that any \emph{model categories} that these $\infty$-categories underlie are Quillen equivalent. In fact, Quillen equivalences between model categories of $(\infty,n)$-categories are easily recognized (Proposition \ref{prop:cellsdetectquillenequiv}): a Quillen adjunction between two model categories of $(\infty,n)$-categories is a Quillen equivalence if and only if it preserves the cells up to weak equivalence. This implies that many of the known Quillen functors relating various models are in fact Quillen equivalences. 

\subsection*{Acknowledgements}
We are grateful to Charles Rezk, who noticed an error in a very early version of this paper.
We also thank the referee, who made a number of helpful suggestions to make our arguments more legible.


\part{Preliminaries on gaunt $n$-categories}


\section{Strict $n$-categories} \label{Sect:strict}

\begin{definition} A small \emph{strict $0$-category} is a set.
Proceeding recursively, for any positive integer $n$, a small \emph{strict $n$-category} is a small category enriched in small $(n-1)$-categories.
A {\em functor} between strict $n$-categories will mean an enriched functor.
We denote by $\cat_{n}$ the category of small strict $n$-categories and functors.  
\end{definition}

\begin{remark} For the rest of this paper we will hold the convention that, unless otherwise stated, all strict $n$-categories are small.
\end{remark}

\begin{definition} A set can be regarded as a $1$-category with only identity morphisms, and this defines a fully faithful functor
\[\cat_0\hookrightarrow \cat_{1}\]
that respects products.
Passing to enriched categories then yields a sequence of fully faithful functors
\[\cat_0 \hookrightarrow \cat_{1}\hookrightarrow \cdots \hookrightarrow\cat_{(n-1)} \hookrightarrow \cat_{n}\hookrightarrow \cdots.\]

We will tacitly treat these functor as \emph{inclusions} in order to treat strict $k$-categories as examples of strict $n$-categories when $0\leq k\leq n$.
In particular, a strict $n$-categories in the image of $\cat_0$ under this inclusions will be called {\em discrete}.
\end{definition}

\begin{remark} It is well known that the fully faithful inclusion $i:\cat_k\hookrightarrow\cat_n$ admits a right adjoint $j_k$.
The right adjoint $j_k: \cat_n \to \cat_k$ carries a strict $n$-category $C$ the maximal $k$-category $j_kC$ contained therein.
\end{remark}

\begin{example} \label{Ex:Strictncats} The following are some important examples of strict $n$-categories:
\begin{enumerate}[(\ref{Ex:Strictncats}.1)]
\item The empty $n$-category $\emptyset$ is the empty set, regarded as an $n$-category.
Later it will be convenient to write $\partial C_{0}\coloneq\emptyset$.
\item The \emph{$0$-cell} $C_0$ is the singleton set, viewed as a strict $n$-category.
This is also the terminal strict $n$-category.
\item The $1$-category $E$ is the ``walking isomorphism,'' that is, the unique contractible groupoid that contains exactly two objects.
\item The \emph{$k$-cell} $C_k$ is the strict $k$-category defined inductively as follows: the set of object of $C_k$ is the set $\{\bot,\top\}$, and one has
\begin{equation*}
\hom_{C_k}(x,y)\coloneq\begin{cases}
C_0&\textrm{if }x=y;\\
C_{k-1}&\textrm{if }x=\bot\textrm{ and }y=\top;\\
\emptyset&\textrm{otherwise.}
\end{cases}
\end{equation*}
There is a unique composition law making this a strict $k$-category, and therefore a strict $n$-category for $n\geq k$. 
\item The $(k-1)$-category $\partial C_k\coloneq j_{k-1}C_k$ can be described as the $(k-1)$-category of  ``walking parallel $(k-1)$-morphisms''.
\item A finite ordinal $S$ gives rise to a $1$-category $\Delta^S$, whose objects are elements of $S$ in which there is a unique morphism $s \to s'$ if and only if $s \leq s'$.
The simplex category of nonempty finite ordinals will be denoted $\Delta$, as usual.
\end{enumerate} 
\end{example}

\begin{notation} We may generalize the fourth example in the following manner.
Suppose $X$ a strict $n$-category.
We obtain a strict $(n+1)$-category $\sigma X$, the \emph{suspension} of $X$, as follows.
The set of objects of $\sigma X$ is the set $\{\top,\bot\}$, and one defines
\begin{equation*}
\hom_{\sigma X}(x,y)\coloneq\begin{cases}
C_0&\textrm{if }x=y;\\
X&\textrm{if }x=\bot\textrm{ and }y=\top;\\
\emptyset&\textrm{otherwise.}
\end{cases}
\end{equation*}
There is a unique composition law that makes this into a strict $n$-category.

Observe that the $k$-fold suspension of the zero cell $C_0$ is now nothing more than the $k$-cell $\sigma^k (C_0) = C_k$.
Furthermore, the suspension functor preserves both pullback and pushout squares. 
Consequently, we have an isomorphism
\[ \sigma(\emptyset)\cong C_0 \sqcup C_0 \cong \partial C_1, \]
and therefore by induction we have
\[ \sigma^k(\emptyset) \cong \sigma^{k-1}(C_0 \cup^{\emptyset} C_0) \cong C_{k-1} \cup^{\partial C_{k-1}} C_{k-1} \cong \partial C_{k}. \]
The canonical inclusion $\partial C_k \hookrightarrow C_{k-1}$ arises as the $k$-fold suspension  of the unique functor $C_0 \sqcup C_0 \to C_0$.
\end{notation}

The following proposition is well known.
\begin{proposition} \label{prop:cellsgenerateStrictNCat} The cells ($C_i$, $0 \leq i \leq n$) generate $\cat_n$ under colimits; that is, the smallest full subcategory of $\cat_n$ containing the cells and closed under colimits is all of $\cat_n$. \qed
\end{proposition}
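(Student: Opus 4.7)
The plan is induction on $n$, with the main content being a density argument: I will first show that the subcategory closed under colimits and containing the cells also contains all ``globular pasting diagrams,'' and then show that every strict $n$-category is canonically a colimit of such pasting diagrams mapping into it.

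The base case $n=0$ is immediate: $\cat_0=\set$ and $C_0$ is a point, so every set $S$ is the coproduct $\coprod_{s\in S}C_0$. For the inductive step, let $\mathcal{D}_n\subseteq\cat_n$ denote the smallest full subcategory containing the cells $C_0,\ldots,C_n$ and closed under colimits. A secondary induction on $k$, based on the formula $\partial C_k\cong C_{k-1}\cup^{\partial C_{k-1}}C_{k-1}$ recorded in the excerpt and the base case $\partial C_0=\emptyset$ (the empty colimit of cells), shows that every boundary $\partial C_k$ lies in $\mathcal{D}_n$. Consequently every \emph{globular pasting diagram}---any iterated pushout of cells along boundary inclusions---lies in $\mathcal{D}_n$ as well.

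The crux is now to express every $X\in\cat_n$ canonically as a colimit of pasting diagrams over $X$. Let $\Theta_n\subseteq\cat_n$ denote the full subcategory spanned by globular pasting diagrams, and form the comma category $(\Theta_n/X)$ together with its tautological functor $F\colon(\Theta_n/X)\to\cat_n$, $(P\to X)\mapsto P$. The canonical cocone under $F$ with vertex $X$ induces a comparison map $\colim F\to X$, and the plan is to show it is an isomorphism by verifying that for every $Y\in\cat_n$ the induced map
\[
\Hom_{\cat_n}(X,Y)\longrightarrow \lim_{(P\to X)^{\op}}\Hom_{\cat_n}(P,Y)
\]
is a bijection. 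This reduces to the assertion that a strict $n$-category $X$ is fully determined by its sets of $k$-cells $\Hom_{\cat_n}(C_k,X)$, the source/target/identity structure encoded by the morphisms between cells, and the composition structure encoded by maps out of the pushouts $C_k\cup^{C_{k-1}}C_k$, each of which carries a distinguished ``composite'' $k$-cell.

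The main obstacle is formalizing this density claim. Cells alone recover only the underlying globular set of $X$; composition becomes visible only once one also allows the pasting-diagram pushouts, and one must carefully match a strict $n$-category structure (as unpacked earlier in the section via category structures $(*_i,s_i,t_i)$ and the globularity condition) against cocone data on $(\Theta_n/X)$. Once this is done, we obtain $X\cong\colim F$, and since $F$ lands in $\mathcal{D}_n$, the induction closes. The reason the statement is labelled ``standard'' is that the density of globular pasting diagrams in $\cat_n$---equivalently, of Joyal's cell category $\Theta_n$---is a classical fact about strict $n$-categories; the only additional step above is the elementary reduction from pasting diagrams to cells provided by the $\partial C_k$-observation.
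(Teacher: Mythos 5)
The paper offers no proof of this proposition at all --- it is simply declared ``standard'' --- so there is nothing to compare your argument against line by line; what you have written is a reasonable fleshing-out of the standard argument. Your route (reduce to the density of the globular pasting diagrams, i.e.\ of Joyal's $\Theta_n$, and then decompose each pasting diagram into cells) is exactly the classical one, and the density statement you defer to is Berger's theorem, which the paper itself invokes later as \cite[Th.~3.7]{MR2331244} when it introduces $\Theta_n$. Two small corrections. First, the objects of $\Theta_n$ are \emph{globular sums}: iterated pushouts of cells glued along source/target inclusions of \emph{lower-dimensional cells} $C_j\hookrightarrow C_k$, not along the boundaries $\partial C_k$; your computation that $\partial C_k$ is a colimit of cells is true but is not the decomposition you need. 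The decomposition that closes the induction is the one the paper records, $i([m];o_1,\dots,o_m)=\sigma i(o_1)\cup^{C_0}\cdots\cup^{C_0}\sigma i(o_m)$, combined with the facts that $\sigma$ preserves pushouts and sends cells to cells. Second, the density of $\Theta_n$ is genuinely the entire content of the proposition (the cells alone are not dense, since maps out of cells see only the underlying globular set and not composition), so your ``main obstacle'' paragraph is not a formality but the theorem itself; it is fine to cite it as classical, as the paper implicitly does, but your sketch of matching cocone data on $(\Theta_n/X)$ against the structure $(*_i,s_i,t_i)$ would need the full strength of Berger's argument to be made rigorous.
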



\section{Gaunt $n$-categories}\label{sect:gaunt}

\begin{definition}\label{def:gaunt} A strict $n$-category $X$ is \emph{gaunt} if for any $1\leq k \leq n$, the $n$-category $X$ is \emph{local} with respect to the natural functor
\[\sigma^{k-1}E \to \sigma^{k-1}(C_0) = C_{k-1};\]
that is, the induced map
\begin{equation*}
\cat_n(C_{k-1},X) \to \cat_n(\sigma^{k-1}E, X)
\end{equation*}
is a bijection.
Equivalently, a strict $n$-category $X$ in gaunt just in case, for any $1\leq k\leq n$, any invertible $k$-morphism is an identity.

We write $\gaunt_n\subset\cat_n$ for the full subcategory spanned by the gaunt $n$-categories.
\end{definition}

\begin{remark} \label{rmk:maxsubisgaunt} Observe that the suspension of a gaunt $n$-category is again gaunt. Note also that if a strict $n$-category $X$ is gaunt, then for any $0\leq k\leq n$, so is the strict $k$-category $j_k X$.
\end{remark}

\begin{remark} Rezk observed \cite[\S~10]{Rezk} that the $1$-category $E$ may be exhibited in $\cat_1$ as a pushout of more elementary $n$-categories:
\begin{equation*}
E\cong\Delta^3\cup^{(\Delta^{\{0,2\}}\sqcup\Delta^{\{1,3\}})}(\Delta^0\sqcup\Delta^0).
\end{equation*}
Consequently, a strict $n$-category $X$ is gaunt if and only if for each $k \geq 0$ the following natural map is a bijection:
\begin{equation*}
\Fun(C_k, X) \to \Fun(\sigma^k(\Delta^3), X ) \times_{\Fun(\sigma^k(\Delta^{\{0,2\}}\sqcup\Delta^{\{1,3\}}), X)} \Fun( \sigma^k(\Delta^0\sqcup\Delta^0) , X).
\end{equation*} 
\end{remark}

The following is an easy consequence of the fact that $\cat_n$ is a presentable category.
\begin{proposition}\label{Cor:GauntLocalization} The inclusion
\[\gaunt_n\hookrightarrow\cat_n\]
admits a left adjoint $L^G$ that exhibits $\gaunt_n$ as a localization of $\cat_{n}$. \qed
\end{proposition}
\noindent In particular, $\gaunt_n$ is a presentable category. In fact, we can be more precise.
\begin{lemma} \label{lma:gauntisfp} The category $\gaunt_n$ is locally finitely presentable.
\begin{proof} Since $\cat_n$ is locally finitely presentable, it suffices to show that the inclusion $\gaunt_n \hookrightarrow \cat_{n}$ commutes with filtered colimits.
To this end, suppose $\Lambda$ a filtered category, and suppose $D\colon\Lambda\to\cat_n$ a diagram such that for any object $\alpha\in\Lambda$, the $n$-category $D_{\alpha}$ is gaunt.
We claim that the colimit $D=\colim_{\alpha\in\Lambda}D_{\alpha}$ (formed in $\cat_n$) is gaunt as well.
This claim now follows readily from the fact that both $C_k$ and $\sigma^k(E)$ are compact objects in $\cat_n$.
\end{proof}
\end{lemma}

\begin{remark} \label{rmk:gauntisIndoffpgaunt} The previous lemma now implies that the category $\gaunt_n$ can be identified with the category of Ind-objects of the full subcategory $\gaunt_n^{\omega}\subset\gaunt_n$ spanned by the compact objects of $\gaunt_n$.
That is \cite[Corollary 2.1.9$'$]{Makkai-Pare}, for any category $\mathcal{D}$ that admits all filtered colimits, if $\Fun^{\omega}(\gaunt_n,\mathcal{D})$ denotes the full subcategory of $\Fun(\gaunt_n,\mathcal{D})$ spanned by those functors that preserve filtered colimits, then the restriction functor
\begin{equation*}
\Fun^{\omega}(\gaunt_n,\mathcal{D})\to\Fun(\gaunt_n^{\omega},\mathcal{D})
\end{equation*}
is an equivalence.
\end{remark}

\begin{corollary} \label{cor:GauntGenByCells} Suppose $0\leq k\leq n$. Then the smallest full subcategory of $\gaunt_n$ that is closed under colimits and contains the cells $C_r$ for $r\leq k$ is $\gaunt_k$. 
\begin{proof} The inclusion of gaunt $k$-categories commutes with colimits (as it admits a right adjoint, see Rk.~\ref{rmk:maxsubisgaunt}), whence it is enough to consider the case $k=n$.
This now follows readily from Corollary \ref{Cor:GauntLocalization} and Proposition \ref{prop:cellsgenerateStrictNCat}.
\end{proof}
\end{corollary}


\section[Endomorphisms and automorphisms of the category of Gaunt n-categories]{Endomorphisms and Automorphisms of $\gaunt_n$} We now demonstrate that the full subcategory of $\Fun(\gaunt_n,\gaunt_n)$ spanned by the autoequivalences is discrete, and in particular it is the set $(\Z/2)^n$. We will also give conditions ensuring an endomorphism is an equivalence. 

\begin{definition} The \emph{globular category} $\G_n$ consists of the full subcategory of $\gaunt_n$ consisting of the $i$-cells $C_i$ for $i \leq n$.

An {\em $n$-globular set} is a presheaf of sets on $\G_n$.
The {\em $k$-cells} $X_k$ of a globular set is the set obtained by evaluating the presheaf $X$ on $C_k$.
\end{definition}

\begin{example} Any strict $n$-category $X$ gives rise to an $n$-globular set, which we also denote by $X$, such that $X_k\coloneq\cat_n(C_k,X)$. In other words, $X_k$ is the set of \emph{$k$-morphisms} of $X$.
\end{example}

\begin{remark} The globular sets considered here are sometimes called {\em reflexive globular sets}, in order to emphasize the fact that our globular category $\G_n$ includes {\em degeneracies} $C_k \to C_{k-1}$. The non-reflexive globular category $\G_n^\textrm{nr}$ consists of the subcategory of $\G_n$ with the same objects but only with the morphisms which are injective on $n$-morphisms. As a category it is generated by object $C_k$ $0 \leq k \leq n$ with morphisms
	\begin{equation*}
		s_k,t_k: C_{k-1} \to C_k
	\end{equation*}
satisfying $s_kt_{k-1} = t_kt_{k-1}$ and $s_ks_{k-1} = t_ks_{k-1}$. We will have only cursory use for non-reflexive globular sets in this paper. 
\end{remark}

\begin{remark} In this language, an alternative, noninductive definition of strict $n$-category is possible: a strict $n$-category $X$ is an $n$-globular set together with a family of operations $n \geq k\geq j$:
\[\ast_j:X_k\times_{X_{j-1}}X_k\to X_k,\]
which are associative, unital, and suitably compatible.
\end{remark}

\begin{lemma} \label{lma:uniqueauto} There is a unique natural transformation from the identity functor on $\gaunt_n$ to itself.
\begin{proof} Such a natural transformation consists of component maps (i.e., functors) $ \eta_X\colon X \to X$ for each gaunt $n$-category $X$.
We will show that $\eta_X = \id_X$ for all $X$.
The functor $\eta_X$ induces, for each $0 \leq k \leq n$, a map on sets of $k$-cells,
\[(\eta_X)_k\colon X_k \to X_k.\]
Since a functor is completely determined by the map on $k$-cells for each $k$, it is enough to show that each $(\eta_X)_k$ is the identity.
By naturality of $\eta$ it is enough to show that the single functor $\eta_{C_n} = \id_{C_n}$.

One my now show that $\eta_{C_k} = \id_{C_k}$ by inducting on $k$.
When $k=0$ the claim is obvious.
Now the inductive hypothesis asserts $\eta_{C_k}$ is a functor which restricts to the identity functor on $\partial C_k$.
There is only one functor with this property, namely $\eta_{C_k} = \id_{C_k}$.
\end{proof}  
\end{lemma}

\begin{construction} For any category $C$ enriched in a symmetric monoidal category $V$, one may of course form the opposite enriched category $X^{\op}$.
This is an involution on the category of $V$-enriched categories.

Inducting this yields a free action $\rho$ of $(\Z/2)^n$ on $\cat_n$.
We will show in a moment that in fact this action produces an equivalence between $(\Z/2)^n$ and the monoidal full subacategory of $\Fun(\gaunt_n,\gaunt_n)$ spanned by the autoequivalences.

For now, let us restrict this action: we note that $\rho$ restricts to an action on the globular category $\G_n$.
\end{construction}

\begin{proposition}\label{prop:aut_glob_cat} Every autoequivalence of $\G_n$ is isomorphic to $\rho(g)$ for some element $g\in (\Z/2)^n$.
\begin{proof} First we observe that the $k$-cell $C_k \in \G_n$ is the unique object such that, up to isomorphism, there exists precisely $k$ other objects of $\G_n$ which occur as proper retracts (namely all the cells $C_i$ with $0 \leq i < k$).
Consequently, every autoequivalence $F$ of $\G_n$ must fix the objects. 
  
Next we observe that for each $i<j$, there exists precisely one epimorphism $C_j \twoheadrightarrow C_i$.
Since epimorphisms are preserved by any equivalence of categories, this unique epimorphism is also preserved by $F$.
Similarly, for each $0\leq i < n$, there exist precisely two monomorphisms $C_i \hookrightarrow C_n$; these are either preserved by $F$ or else they are permuted.
Thus every autoequivalence determines an element $\gamma(F) \in (\Z/2)^n$ such that $\gamma(F)_i=0$ just in case the pair of monomorphisms $C_i \hookrightarrow C_n$ is preserved by $F$, and $\gamma(F)_i=1$ just in case the pair of monomorphisms $C_i \hookrightarrow C_n$ is permuted by $F$.
Note that of course $\gamma(\rho(g))=g$.

To conclude the proof, we observe that the symbol $\gamma(F)$ determines $F$. Indeed, every morphisms $C_i \to C_j$ in $\G_n$ admits a factorization
\begin{equation*}
C_i \twoheadrightarrow C_k \hookrightarrow C_n \twoheadrightarrow C_j
\end{equation*}
for some $C_k \in \G_n$. 
\end{proof}
\end{proposition}

\begin{lemma} \label{lma:CellsPreservedByEquiv} Let $F$ be an autoequivalence of the category $\gaunt_n$. Then $F$ restricts to an equivalence between $\G_n$ and its essential image in $\gaunt_n$; that is, $F(C_k) \cong C_k$ for all $0 \leq k \leq n$.
\begin{proof} The proper retracts of $C_n$ are precisely the cells $C_k$ for $0 \leq k < n$, and, as before, each $C_k$ is distinguished as the unique such retract such that, up to isomorphism, there exists precisely $k$ other objects which occur as further proper retracts (the cells $C_s$ for $s<k$).
Thus it is enough to show that $F(C_n) \cong C_n$ for the $n$-cell alone, as this implies the analogous statement $F(C_k) \cong C_k$ for all $0 \leq k \leq n$.
  
Recall that a \emph{generator} of a category $\C$ is an object $X$ such that the corepresentable functor $\C(X, -): \C \to \set$ is faithful \cite[pg.~127]{MR1712872}.
The collection of generators is preserved under any autoequivalence.
The $n$-cell $C_n$ is a generator for $\cat_n$ and hence also $\gaunt_n$, however the $k$-cells $C_k$ for $k < n$ are not generators.
Thus no proper retract of $C_n$ is a generator.
We claim that in fact $C_n$ is the unique generator such that every proper retract is \emph{not} a generator.
If this characterization holds, then any autoequivalence necessarily preserves the $n$-cell up to automorphism and the lemma follows.  
  
In fact we will prove a stronger statement: we claim that the $n$-cell $C_n$ is a retract of every generator of $\gaunt_n$.
Now consider the gaunt $n$-category $\partial C_{n+1}$. This may be written as
\begin{equation*}
\partial C_{n+1} = C_n \cup^{\partial C_n} C_n.
\end{equation*}
There are exactly two non-identity $n$-morphisms in $\partial C_{n+1}$; call them $a,b:C_n\to\partial C_{n+1}$.
Observe that these two functors differ only on the unique nontrivial $n$-morphism of $C_n$. 
 The unique non-trivial $n$-morphism of $C_n$, viewed as a map $C_n \to C_n$, corresponds to the element $\id\in\gaunt_n(C_n,C_n)$.   
  
Suppose now that $X$ is a generator of $\gaunt_n$.
Then there must exist a functor $X \to C_n$ such that the induced map $X_n\to\gaunt_n(C_n,C_n)$ contains the element $\id\in\gaunt_n(C_n,C_n)$ in its image, 
 for otherwise $\C(X,-)$ would not be able to distinguish $a$ and $b$, contradicting the fact that $X$ is a generator.
Thus there exists an $n$-morphism $f$ of $X$ which maps via this functor to $\id\in\gaunt_n(C_n,C_n)$.
Corresponding to $f$ is a section $C_n \to X$ that carries the unique nontrivial $n$-morphism of $C_n$ to $f$. This exhibits $C_n$ as a retract of $X$, as desired.
\end{proof}
\end{lemma}

\begin{remark} We thank Dimitri Ara who pointed out an error in an earlier version of the above lemma.
An alternative proof of this lemma has appeared in work of Dimitri Ara, Moritz Groth, and Javier Guti\'errez \cite{1312.4994}.
\end{remark}

The composition law $\ast_{j}$ (for $k$-morphisms, $n\geq k \geq j$) is corepresented by a map 
\begin{equation*}
w_j^k\colon C_{k}\to C_{k}\cup^{C_{j-1}}C_{k}.
\end{equation*}
Let $E\colon \gaunt_n \to \gaunt_n$ be an endofunctor. We will say that \emph{$E$ commutes with compositional pushouts} if for all $k \geq j \geq 1$ the natural map is a isomorphism: \begin{equation*}
	E(C_{k}\cup^{C_{j-1}}C_{k}) \cong E(C_{k})\cup^{E(C_{j-1})}E(C_{k}).
\end{equation*}
Note that it is sufficient to consider only the case $k=n$. The other cases, being retracts of the $n=k$ case, follow automatically. 

\begin{lemma}\label{lem:Endoofgaunt}
	Any endofunctor $E$ of the category $\gaunt_n$ that commutes with compositional pushouts and restricts to an automorphism of the category $\G_n$ is an autoequivalence and isomorphic to a functor of the form $\rho(g)$ for some $g\in(\Z/2)^n$.
\begin{proof}
	By Proposition \ref{prop:aut_glob_cat} the restriction of $E$ to $\G_n$ is necessarily of the form $\rho(g)$ for some $g\in (\Z/2)^n$. It suffices to prove that $E\circ\rho(g)\simeq\rho(g)\circ\rho(g)$, which is in turn simply the identity, so without loss of generality we may assume $\rho(g) = \id$, that is, that $E$ restricts to the identity functor on $\G_n$.
	
Now in this case, the isomorphisms
\begin{equation*}
\gaunt_n(C_k,X)\cong\gaunt_n(F(C_k),F(X))=\gaunt_n(C_k,F(X))
\end{equation*}
are natural in both $C_k$ and $X$, whence one obtains a natural isomorphism $\gamma\colon U\cong U\circ F$, where $U\colon\gaunt_{n}\to\Fun(\G_{n}^{\op},\set)$ is the forgetful functor from gaunt $n$-categories to globular sets.
It thus remains to show that this natural isomorphism is compatible with the compositions $\ast_j$.

For this, consider
\begin{equation*}
w_j^k\colon C_{k}\to C_{k}\cup^{C_{j-1}}C_{k},
\end{equation*}
 the morphism that corepresents the composition law $\ast_{j}$, as above. Since $E$ preserves compositional pushouts, one obtains a commutative diagram
\begin{equation*}
\begin{tikzpicture} 
\matrix(m)[matrix of math nodes, 
row sep=4ex, column sep=4ex, 
text height=1.5ex, text depth=0.25ex] 
{C_k&C_{k}\cup^{C_{j-1}}C_{k}\\
&E(C_{k})\cup^{E(C_{j-1})}E(C_{k})\\
E(C_k)&E(C_{k}\cup^{C_{j-1}}C_{k}).\\}; 
\path[>=stealth,->,font=\scriptsize] 
(m-1-1) edge node[above]{$w_j^k$} (m-1-2) 
edge[-,double distance=1.5pt] (m-3-1) 
(m-1-2) edge[-,double distance=1.5pt] (m-2-2) 
(m-2-2) edge node[right]{$\cong$} (m-3-2)
(m-3-1) edge node[below]{$E(w_j^k)$} (m-3-2); 
\end{tikzpicture}
\end{equation*}
Hence for any gaunt $n$-category $X$, we obtain a commutative diagram
\begin{equation*}
\begin{tikzpicture} 
\matrix(m)[matrix of math nodes, 
row sep=4ex, column sep=4ex, 
text height=1.5ex, text depth=0.25ex] 
{\gaunt_n(C_{k}\cup^{C_{j-1}}C_{k},X)&\gaunt_n(C_k,X)\\
\gaunt_n(E(C_{k}\cup^{C_{j-1}}C_{k}),E(X))&\\
\gaunt_n(E(C_{k})\cup^{E(C_{j-1})}E(C_{k}),E(X))&\gaunt_n(F(C_{k}),F(X))\\
\gaunt_n(C_{k}\cup^{C_{j-1}}C_{k},E(X))&\gaunt_n(C_k,E(X))\\}; 
\path[>=stealth,->,font=\scriptsize] 
(m-1-1) edge node[above]{$\ast_j$} (m-1-2) 
edge node[left]{$\cong$} (m-2-1) 
(m-1-2) edge node[right]{$\cong$} (m-3-2)
(m-2-1) edge node[left]{$\cong$} (m-3-1) 
(m-3-1) edge[-,double distance=1.5pt] (m-4-1)
(m-3-2) edge[-,double distance=1.5pt] (m-4-2)
(m-4-1) edge node[below]{$\ast_j$} (m-4-2); 
\end{tikzpicture}
\end{equation*}
in which the top and bottom morphisms are exactly the composition functors. Hence the natural isomorphism $\gamma$ is compatible with compositions, whence it lifts to a natural isomorphism $\id\cong E$, as desired.
\end{proof}	
\end{lemma}

\begin{remark}\label{rmk:gauntomegaendo}
	The above lemma is also valid for $\gaunt_n^\omega$ in place of $\gaunt_n$. 
\end{remark}

\begin{corollary} \label{Cor:autofgaunt} Any autoequivalence of the category $\gaunt_n$ is isomorphic to a functor of the form $\rho(g)$ for some $g\in(\Z/2)^n$.
\begin{proof} Any autoequivalence preserves all colimits, in particular the compositional pushouts. Moreover by Lemma~\ref{lma:CellsPreservedByEquiv}, every autoequivalence $F$ restricts to an autoequivalence of $\G_n$, and hence the assumptions of the previous lemma are met. 
	
\end{proof}
\end{corollary}

\begin{theorem}\label{rk:autofgaunt} The full subcategory
\[\Aut(\gaunt_n)\subset\Fun(\gaunt_n,\gaunt_n)\]
of the category spanned by the autoequivalences is equivalent to the discrete set $(\mathbb{Z}/2)^{n}$.
\begin{proof} Indeed, it follows from Corollary~\ref{Cor:autofgaunt} that it is essentially surjective, and it follows from Lemma \ref{lma:uniqueauto} that the action functor $\rho:(\Z/2)^n\to\Aut(\gaunt_n)$ is fully faithful.
\end{proof}
\end{theorem}
\noindent Combining these results with Rm.~\ref{rmk:gauntisIndoffpgaunt} we also obtain:
\begin{corollary}\label{cor:Aut_kappa_Gaunt_n} The full subcategory
\[\Aut(\gaunt_n^\omega)\subset\Fun(\gaunt_n^\omega,\gaunt_n^\omega)\]
spanned by the autoequivalences is equivalent to the discrete set $(\mathbb{Z}/2)^{n}$.
\end{corollary}


\section{Correspondences and the category $\gaunt_n$}\label{sect:Corr}

The category $\cat_n$ is cartesian closed, which means that for each strict $n$-category $X$, the endo-functor $(-) \times X$ admits a right adjoint, $\uHom(X,-)$. The strict $n$-categories $\uHom(X,Y)$ make $\cat_n$ enriched in itself, and thus it can be regarded as a \emph{large} strict $(n+1)$-category. 

The category $\gaunt_n$, as a full subobject of $\cat_n$, is likewise a large strict $(n+1)$-category. Moreover the $n$-category $\uHom(X,Y)$ is gaunt whenever $Y$ is gaunt, making $\gaunt_n$ itself cartesian closed. In fact, if we replace $\gaunt_n$ by a skeleton, then it is a large \emph{gaunt} $(n+1)$-category, though we will not need this observation. 

If $X$ is a strict $n$-category (possibly large) then we may consider the slice category $(\gaunt_n/X)$ of gaunt $n$-categories equipped with a functor to $X$. This is naturally enriched in $\cat_n$ as well and hence also a (large) $(n+1)$-category. If $p_0:A_0 \to X$ and $p_1: A_1 \to X$ are in $(\gaunt_n/X)$, then the $\cat_n$-enriched hom from $p_0$ to $p_1$ is given by the following pull-back:
\begin{center}
\begin{tikzpicture}
		\node (LT) at (0, 1.5) {$\uHom(p_0, p_1)$};
		\node (LB) at (0, 0) {$\uHom(A_0, A_1)$};
		\node (RT) at (3, 1.5) {$\mathrm{pt}$};
		\node (RB) at (3, 0) {$\uHom(A_0, X)$};
		\draw [->] (LT) -- node [left] {$ $} (LB);
		\draw [->] (LT) -- node [above] {$ $} (RT);
		\draw [->] (RT) -- node [right] {$p_0$} (RB);
		\draw [->] (LB) -- node [below] {$(p_1)_*$} (RB);
		\node at (0.5, 1) {$\ulcorner$};
\end{tikzpicture}
\end{center}

\begin{definition} A \emph{correspondence} (or \emph{$k$-correspondence}, for clarity) of gaunt $n$-categories is an object of the overcategory $(\gaunt_n/C_k)$.
In other words, a $k$-correspondence of gaunt $n$-categories is a gaunt $n$-category $M$ along with a functor $M \to C_k$. 
\end{definition}
 
\begin{remark}
By Lemma \ref{lma:gauntisfp} $\gaunt_n$ is locally finitely presentable. It follows (see \cite[Corollary 2.44,~2.47]{MR1294136}) that each of the categories of $k$-correspondence is also locally finitely presentable. 	

	 Given two $k$-correspondences $M\to C_k$ and $N\to C_k$, we may form the \emph{product correspondence} $M\times_{C_k}N$. This is the product in the category $(\gaunt_n/C_k)$ of $k$-correspondences.
\end{remark}

The terminology and connection to the classical theory of correspondences is made more clear by introducing the following category.
\begin{definition} Define $\corr_n^0\coloneq\gaunt_n$, and for $k>0$, define $\corr_n^k$ recursively as follows. The objects are triples $(X_0,X_1,F)$ consisting of two gaunt $n$-categories $X_0$ and $X_1$ and a functor
\[F\colon X_0^\op\times X_1\to\corr_{n-1}^{k-1}.\]
(Here $\op = \rho(1,0, \dots, 0)$ is the opposite obtained by reversing just the 1-morphisms of $X_0$.) A morphism $(X_0,X_1,F)\to (Y_0,Y_1,G)$ is a triple $(f_0,f_1,\alpha)$ consisting of functors $f_0\colon X_0\to Y_0$ and $f_1\colon X_1\to Y_1$ and a natural transformation
\[\alpha\colon F\to G\circ(f_0^{\op}\times f_1).\]
\end{definition}

\begin{lemma} There is a natural equivalence of categories
\[\phi_n^k\colon(\gaunt_n/C_k) \simeq \corr_n^k.\] 
\end{lemma}

\begin{proof} The functor $\phi_n^0$ is simply the identity. We now define $\phi_n^k$ recursively. Suppose that $k>0$, and assume that the equivalence $\phi_{n-1}^{k-1}$ has been defined.

Let us define the functor $\phi_n^k\colon(\gaunt_n/C_k)\to \corr_n^k$. For any object $p\colon X \to C_k$ of $(\gaunt_n/C_k)$, let $\phi_n^k(p)$ be the triple $(X_0,X_1,F)$, where $X_0$ and $X_1$ are the fibers of $p$ over $0$ and $1$, respectively, and the functor $F\colon X_0^\op\times X_1\to\corr_{n-1}^{k-1}$ is the composite $\phi_{n-1}^{k-1}\circ F'$, where the functor
\[F'\colon X_0^\op\times X_1\to(\gaunt_{n-1}/C_{k-1})\]
is defined as follows:
\begin{itemize}
\item For any objects $(x_0,x_1)\in X_0^\op\times X_1$, let $F(x_0,x_1)$ be the $(n-1)$-category $X(x_0,x_1)$, equipped with the functor $X(x_0,x_1)\to C_k(0,1)=C_{k-1}$ induced by $p$.
\item For any objects $(x_0,x_1)$ and $(y_0,y_1)$ of $X_0^\op\times X_1$, the functor
\[X_0(y_0,x_0)\times X_1(x_1,y_1)\to\Fun_{/C_{k-1}}(X(x_0,x_1),X(y_0,y_1))\]
is simply composition.
\end{itemize}
For any commutative triangle
\begin{equation*}
\begin{tikzpicture}[baseline]
\matrix(m)[matrix of math nodes,
row sep=4ex, column sep=4ex,
text height=1.5ex, text depth=0.25ex]
{X && Y\\
&C_k& \\ };
\path[>=stealth,->,font=\scriptsize]
(m-1-1) edge node[above]{$f$} (m-1-3)
edge[inner sep=0.5pt] node[below left]{$p$} (m-2-2)
(m-1-3) edge[inner sep=0.5pt] node[below right]{$q$} (m-2-2);
\end{tikzpicture}
\end{equation*}
of gaunt $n$-categories, we define
\[\phi_n^k(f)\coloneq(f_0,f_1,\alpha)\colon\phi_n^k(p)=(X_0,X_1,F'\circ\phi_n^k)\to(Y_0,Y_1,G'\circ\phi_n^k)=\phi_n^k(q),\]
where $f_0$ and $f_1$ are the restrictions of $f$ to the fibers, and $\alpha$ is the composite $\phi_{n-1}^{k-1}\ast\alpha'$, in which the natural transformation $\alpha'$ is the one whose components are given by the functor $X(x_0,x_1)\to Y(f(x_0),f(x_1))$ induced by $f$.
	
We now construct a quasi-inverse $\psi_n^k\colon\corr_n^k\to(\gaunt_n/C_k)$ to $\phi_n^k$. Again, when $k=0$, we let $\psi_n^0$ be the identity, and we proceed recursively. We assume $k>0$ and that the quasi-inverse $\psi_{n-1}^{k-1}$ to $\phi_{n-1}^{k-1}$ has been defined.

For any object $(X_0, X_1, F) \in \corr_n^k$, define a gaunt $n$-category $U(X_0, X_1, F)$ with object set $\ob X_0 \sqcup \ob X_1$ and
\begin{equation*}
U(X_0, X_1, F)(a,b)\coloneq\begin{cases}
X_0(a,b) & \textrm{ if } a,b \in X_0 \\
X_1(a,b) & \textrm{ if } a,b \in X_1 \\
\psi_{n-1}^{k-1}(F(a,b))& \textrm{ if } a \in X_0,b \in X_1 \\
\varnothing & \textrm{else}
\end{cases}
\end{equation*}
The composition in $U(X_0, X_1, F)$ is the obvious one, and it is clear that this defines a functor $U\colon\corr_n^k\to\gaunt_n$. We now apply this functor to the terminal object of $\corr_n^k$, namely the triple $(C_0,C_0,\phi_{n-1}^{k-1}(C_{k-1}))$. Since $U(C_0,C_0,\phi_{n-1}^{k-1}(C_{k-1}))=C_k$, it follows that $U$ factors through a functor $\psi_n^k\colon\corr_n^k\to(\gaunt_n/C_k)$.

It is now a simple matter to observe that $\psi_n^k$ is indeed quasi-inverse to $\phi_n^k$.
\end{proof}

\begin{remark} Unwinding the functor $\phi_n^k$ in the argument above, one finds that the product in the category $\corr_n^k$ may be written recursively in the following manner:
\begin{equation*}
	(X_0, X_1, F) \times (Y_0, Y_1, G) \cong (X_0 \times Y_0, X_1 \times Y_1, F \otimes G)
\end{equation*}
where $F \otimes G$ is defined as the composite:
\[(X_0 \times Y_0)^\op \times (X_1 \times Y_1) \cong (X_0^\op \times X_1) \times (Y_0^\op \times Y_1)\stackrel{(F,G)}{\longrightarrow} \corr_{n-1}^{k-1}\times\corr_{n-1}^{k-1}\stackrel{\times}{\longrightarrow} \corr_{n-1}^{k-1}.\]
\end{remark}

\begin{lemma}\label{lm:corrinthom} The category $(\gaunt_n/C_k)$ of $k$-correspondences is cartesian closed.
\begin{proof} The claim is that for any $k$-correspondence $N\to C_k$, the functor
\[-\times_{C_k}N\colon(\gaunt_n/C_k)\to(\gaunt_n/C_k)\]
admits a right adjoint $\uHom_{C_k}(N,-)$. 

Since $\gaunt_n/C_k$ is locally finitely presentable, it is cocomplete and admits a strong generator \cite[Th.~1.20]{MR1294136} and is co-wellpowered \cite[Th.~1.58]{MR1294136}. Thus by the special adjoint functor theorem \cite[Sect.~V.8]{MR1712872} the functor
\[-\times_{C_k}N\colon(\gaunt_n/C_k)\to(\gaunt_n/C_k)\]
admits a right adjoint precisely if it commutes with colimits.

When $k=0$, this follows from the fact that $\gaunt_n$ itself is cartesian closed.

For $k>0$, suppose $(\gaunt_{n-1}/C_{k-1})$ is cartesian closed. To prove that the category $(\gaunt_n/C_k)$ is cartesian closed, we require a description of colimits in terms of the equivalent category $\corr_n^k$. For any small category $\Lambda$ and any diagram $X\colon\Lambda\to\corr_n^k$ with
\[X_{\lambda}=(X_{\lambda,0},X_{\lambda,1},F_{\lambda}),\]
it is easy to see that the colimit is given by the triple $(X_0,X_1,F)$ where
\[X_0=\colim_{\lambda\in\Lambda}X_{\lambda,0}\text{\quad and\quad}X_1=\colim_{\lambda\in\Lambda}X_{\lambda,1},\]
and $F\colon X_0^{\op}\times X_1\to\corr_{n-1}^{k-1}$ is the enriched left Kan extension of
\[\colim_{\lambda\in\Lambda}F_{\lambda}\colon\colim_{\lambda\in\Lambda}(X_{\lambda,0}^{\op}\times X_{\lambda,1})\to\corr_{n-1}^{k-1}\]
along the diagonal
\[\colim_{\lambda\in\Lambda}(X_{\lambda,0}^{\op}\times X_{\lambda,1})\to X_0^{\op}\times X_1.\]

Now for any object $Y=(Y_0,Y_1,G)\in\corr_n^k$, we wish to compare $(\colim_{\lambda\in\Lambda}X_{\lambda})\times Y$ and $\colim_{\lambda\in\Lambda}(X_{\lambda}\times Y)$. In light of our descriptions of products in $\corr_n^k$, we see that the former is $(X_0 \times Y_0, X_1 \times Y_1, F \otimes G)$, and the latter is the colimit of the diagram $Z\colon\Lambda\to\corr_n^k$ that carries $\lambda$ to
\[(X_{\lambda,0} \times Y_1, X_{\lambda,1} \times Y_1, F_{\lambda} \otimes G).\]
Note that, since $\gaunt_n$ is cartesian closed, one has
\[\colim_{\lambda\in\Lambda}(X_{\lambda,0}\times Y_0)\cong X_0\times Y_0\text{\quad and\quad}\colim_{\lambda\in\Lambda}(X_{\lambda,1}\times Y_1)\cong X_1\times Y_1;\]
hence our description of colimits in $\corr_n^k$ exhibits the colimit of $Z$ as $(X_0 \times Y_0, X_1 \times Y_1, (F \otimes G)')$, where $(F\otimes G)'$ is the enriched left Kan extension of
\[\colim_{\lambda\in\Lambda}(F_{\lambda}\otimes G)\colon\colim_{\lambda\in\Lambda}((X_{\lambda,0}\times Y_0)^{\op}\times (X_{\lambda,1}\times Y_1))\to\corr_{n-1}^{k-1}\]
along the diagonal
\[\colim_{\lambda\in\Lambda}((X_{\lambda,0}\times Y_0)^{\op}\times (X_{\lambda,1}\times Y_1))\to (X_0\times Y_0)^{\op}\times(X_1\times Y_1).\]
Our induction hypothesis is that $\corr_{n-1}^{k-1}$ is cartesian closed; so this enriched left Kan extension can be identified with the composition of the enriched left Kan extension of
\[\colim_{\lambda\in\Lambda}(F_{\lambda},G)\colon\colim_{\lambda\in\Lambda}(X_{\lambda,0}^{\op}\times X_{\lambda,1})\times(Y_0^{\op}\times Y_1)\to\corr_{n-1}^{k-1}\times\corr_{n-1}^{k-1}\]
along the diagonal
\[\colim_{\lambda\in\Lambda}(F_{\lambda},G)\colon\colim_{\lambda\in\Lambda}(X_{\lambda,0}^{\op}\times X_{\lambda,1})\times(Y_0^{\op}\times Y_1)\to (X_0^{\op}\times X_1)\times(Y_0^{\op}\times Y_1).\]
But now this left Kan extension is simply the product of $G$ with the left Kan extension that defines $F$. In other words, we have an isomorphism $(F\otimes G)'\cong F\otimes G$, whence the proof is complete.
\end{proof}
\end{lemma}


\section[The categories Theta and Upsilon]{The categories $\Theta_n$ and $\Upsilon_n$}\label{sect:Upsilon}

For many purposes, it is unwieldy to contemplate all gaunt $n$-categories (or even all compact gaunt $n$-categories). The critical structural features of $n$-categories are already captured by far smaller categories. One such smaller category is Joyal's category $\Theta_n$ of `$n$-disks' (Definition \ref{dfn:wreathproduct}):
\begin{definition}[{\cite[Definition 3.1]{MR2331244}}]\label{dfn:wreathproduct}
	Let $C$ be a small category. The {\em wreath product} $\Delta \wr C$ is the category 
	\begin{itemize}
		\item whose objects consist of tuples $([n]; c_1, \dots, c_n)$ where $[n] \in \Delta$ and $c_i \in C$, and
		\item whose morphisms from $([m]; a_1, \dots, a_m)$ to $([n]; b_1, \dots, b_n)$ consist of tuples $(\phi; \phi_{ij})$, where $\phi: [m] \to [n]$, and $\phi_{ij}: a_i \to b_j$ where $0 < i \leq m$, and $\phi(i-1) < j \leq \phi(i)$.
	\end{itemize}

The category $\Theta_n$ is now defined inductively as a wreath product: $\Theta_1 = \Delta$, and $\Theta_n = \Delta \wr \Theta_{n-1}$. In particular this gives rise to embeddings $\sigma: \Theta_{n-1} \to \Theta_n$, given by $\sigma(o) = ([1]; o)$, and $\iota: \Delta \to \Theta_n$ given by $\iota([n]) = ([n]; ([0]), \dots, ([0]))$.
\end{definition}

There is a fully-faithful embedding $i\colon\Theta_n \hookrightarrow \gaunt_n$ as a dense subcategory \cite[Th.~3.7]{MR2331244}. The image under $i$ of $([m]; a_1, \dots, a_m)$ may be described inductively as the following colimit:
\begin{equation*}
	i([m]; o_1, \dots, o_m) = \sigma( i(o_1)) \cup^{C_0} \sigma( i(o_2)) \cup^{C_0} \cdots  \cup^{C_0} \sigma( i(o_m)). 
\end{equation*}
This colimit, taken in $\gaunt_n$, is a series of pushouts in which $C_0$ is embedded into $\sigma( i(o_k))$ via $\top$ and into $\sigma(i(o_{k+1}))$ via $\bot$ as described after Ex.~\ref{Ex:Strictncats}. There is no possible confusion by the meaning of $\sigma$, as $i(\sigma(o)) = \sigma( i(o))$ for all $o \in \Theta_{n-1}$.

Since we will be concerned with the study of correspondences, it is convenient to enlarge $\Theta_n$ to contain products of correspondences:
\begin{definition} \label{Def:Upsilon} The category $\Upsilon_n$ is the smallest full subcategory of $\gaunt_n$ that contains $\Theta_n$ and is closed under products of correspondences, $(M,N)\mapsto M\times_{C_k}N$.
\end{definition}

\begin{remark}\label{rmk:fiberprodofcells} We now examine the fiber products of cells in detail.
We aim to express these fiber products as simple iterated colimits of cells.
Let $\varphi: C_i \to C_j$ and $\psi: C_k \to C_j$ be a pair of functors ($i,j,k \geq 0$).
A map of cells $\varphi: C_i \to C_j$ either factors as a composite $C_i \to C_0 \to C_j$ or is a suspension $\varphi = \sigma(\xi)$ of some map $\xi:C_{i-1} \to C_{j-1}$.

We thus begin by contemplating the case in which $\varphi$ is \emph{not} the suspension of a map of lower dimensional cells.
In this case we have a diagram of pullback squares
\begin{center}
\begin{tikzpicture}
\node (LT) at (0, 1.5) {$C_i \times F$};
\node (LB) at (0, 0) {$C_i$};
\node (MT) at (2, 1.5) {$F$};
\node (MB) at (2, 0) {$C_0$};
\node (RT) at (4, 1.5) {$C_k$};
\node (RB) at (4, 0) {$C_j$};
\draw [->] (LT) -- node [left] {$$} (LB);
\draw [->] (LT) -- node [above] {$$} (MT);
\draw [->] (MT) -- node [above] {$$} (RT);
\draw [->] (MT) -- node [right] {$$} (MB);
\draw [->] (RT) -- node [right] {$\psi$} (RB);
\draw [->] (MB) -- node [below] {$$} (RB);
\draw [->] (LB) -- node [below] {$$} (MB);
\node at (0.5, 1) {$\ulcorner$};
\node at (2.5, 1) {$\ulcorner$};
\end{tikzpicture}
\end{center}
Here $F$ is the fiber of $\psi: C_k \to C_j$ over the unique object in the image of $\varphi$.
There are four possibilities:
\begin{enumerate}[(A)]
\item The image of $\psi$ may be disjoint from the image of $\varphi$, in which case $F = \partial C_0 = \emptyset$.
Hence $F$ and also $C_i \times F$ are the empty colimit of cells.
\item The fiber may be a zero cell, $F=C_0$, in which case $C_i \times F \cong C_i$ is trivially a colimit of cells.
\item The fiber may be the $k$-cell $F \cong C_k$, but we have $i=0$.
In this case $C_i \times F \cong F \cong C_k$ is again trivially a colimit of cells.
\item The fiber may be an $k$-cell $F \cong C_k$, and we have $i \geq 1$.
In this case we have (cf. \cite[Proposition 4.9]{Rezk})
\begin{equation*} 
C_i\times C_k\cong(C_i\cup^{C_0}C_k)\cup^{\sigma(C_{i-1}\times C_{k-1})}(C_k\cup^{C_0}C_i)
\end{equation*}
where for each pushout $C_x \cup^{C_0} C_y$, the object $C_0$ is included into the final object of $C_x$ and the initial object of $C_y$.
\end{enumerate}

As the suspension functor $\sigma$ commutes with pullback squares, a general pullback of cells is the suspension of one of the types just considered. Moreover, as the suspension functor also commutes with pushout squares, the above considerations give a recipe for writing any fiber product of cells as an iterated pushout of cells. This will be made precise in Lemma \ref{lma:fiberproductsareColimits}. 
\end{remark}

\begin{notation} The inclusion $\Upsilon_n\hookrightarrow\gaunt_n$ induces a fully faithful nerve functor
\begin{equation*}
\nu:\gaunt_n\hookrightarrow\Fun(\Upsilon_n^{\op},\set).
\end{equation*}
In particular, we may regard gaunt $n$-categories as particular presheaves of sets on the category $\Upsilon_n$ (precisely which presheaves will be determined in Corollary \ref{cor:0-TruncatedAreGaunt}). Note that the nerve functor commutes with all limits, hence in particular fiber products. 
\end{notation}

\begin{notation} \label{Note:FundPushouts} Let $S_{00}$ consist of the union $A\cup B\cup C\cup D$ of the following four finite sets of maps of presheaves on $\Upsilon_n$:
\begin{equation*}
A\coloneq\left\{ \nu C_{i-1} \cup^{\nu (\partial C_{i-1})}   \nu C_{i-1} \to \nu (\partial C_{i}) \; | \; 0 \leq i \leq n-1 \right\}
\end{equation*}
(when $i=0$, we interpret this as the empty presheaf mapping to the nerve of the empty $n$-category),
\begin{equation*}
B\coloneq\left\{ \nu C_j \cup^{\nu C_i} \nu C_j \to \nu(C_j \cup^{C_i} C_j) \; | \; 0 \leq i < j \leq n \right\},
\end{equation*}
\begin{equation*}
\begin{split}
C\coloneq\Big\{\nu( C_{i + j} \cup^{C_i} C_{i+k}) \cup^{\nu \sigma^{i+1}(C_{j-1} \times C_{k-1})}\nu(C_{i + k} &\cup^{C_i} C_{i +j}) \to \nu(C_{i + j} \times_{C_i} C_{i + k})\\
&\Big|\;0\leq i\leq n\textrm{, }0< j,k\leq n-i \Big\},
\end{split}
\end{equation*}
and, lastly,
\begin{equation*}
D\coloneq\left\{ \nu \sigma^k(\Delta^3) \cup^{\nu \sigma^k(\Delta^{\{0,2\}}\sqcup\Delta^{\{1,3\}})} \nu \sigma^k(\Delta^0\sqcup\Delta^0) \to \nu C_k \; | \; 0 \leq k \leq n\right\}.
\end{equation*}

Now let $S_0$ be the smallest class of morphisms $U \to V$ in $\Fun( \Upsilon_n^{\op}, \set)$ that (a) is closed under isomorphism, (b) contains $S_{00}$, and (c) is closed under the operation $-\times_{C_k}N$ for any functor $V\to C_k$ and any $k$-correspondence $N\to C_k$ with $N\in\Upsilon_n$.
\end{notation}

\begin{lemma} \label{lma:gauntIsLocal} Suppose $X$ a gaunt $n$-category. Then the presheaf $\nu X:\Upsilon_n^{\op}\to\set$ is local with respect to the morphisms of $S_0$. 
\begin{proof} Forming each of the pushouts of $S_{00}$ in $\gaunt_n$ yields an equivalence, so $X$ is local with respect to $S_{00}$.

Now let $S_0'\subseteq S_0$ denote the class of morphisms $f:U\to V$ in $S_0$ such that $\nu X$ is local with respect to $f$ for any gaunt $X$. We have observed that $S_0'$ contains $S_{00}$. It is also visibly closed under isomorphism.

We complete the proof by showing that $S'_0$ is closed under the operation $-\times_{C_i}N$ for any $N\in\Upsilon_n$.
Indeed, suppose $U \to V$ a morphism of $S_0'$.
We claim that for any morphism $V\to C_k$ and any functor $N\to C_k$, the map
\[\Upsilon_n(V\times_{C_k}N,X)\to\Upsilon_n(U\times_{C_k}N,X)\]
is a bijection.
For each $W\in\Upsilon_n$, we have 
\begin{align*}
\gaunt_n(W\times_{C_i}N, X)&\cong(\gaunt_{n}/C_k)(W\times_{C_k}N,X\times C_k)\\
&\cong(\gaunt_{n}/C_k)(W,\uHom_{C_k}(N,X\times C_k)),
\end{align*}
where $\uHom_{C_k}(N,-)$ denotes the right adjoint of Lemma \ref{lm:corrinthom}.
The claim now follows from the observation that as $\uHom_{C_i}(N, X \times C_i)$ is gaunt, it is local with respect to $U \to V$.
\end{proof}
\end{lemma}

\begin{lemma} \label{lma:fiberproductsareColimits} Let
\[\C\coloneq S_{00}^{-1} \Fun( \Upsilon_n^{\op}, \set)\]
denote the full subcategory of presheaves of sets which are local with respect to the the morphisms of $S_{00}$.
Let $\varphi: C_i \to C_j$ and $\psi: C_k \to C_j$ be an arbitrary pair of maps ($i,j,k \geq 0$).
Then $\nu(C_i \times_{C_j} C_k)$ is contained in the smallest full subcategory of $\C$ that contains the nerves of cells and is closed under the formation of colimits.  
\begin{proof} Recall that $\nu$ commutes with limits.
Let $m$ ($\leq i,j,k$) be the largest integer such that $\varphi = \sigma^m(g)$ and $\psi = \sigma^m(f)$ are both $m$-fold suspensions of maps, $g: C_{i-m} \to C_{j-m}$ and $f: C_{k-m} \to C_{j-m}$.

Suppose, without loss of generality, that $\varphi$ is not an $(m+1)$-fold suspension of a map.
We thus have an $m$-suspension of the situation considered in Rk. \ref{rmk:fiberprodofcells}; that is, we have a diagram of pullback squares
\begin{center}
\begin{tikzpicture}
\node (LT) at (0, 1.5) {$\sigma^m(C_{i-m} \times_{C_0} F)$};
\node (LB) at (0, 0) {$C_i = \sigma^m(C_{i-m})$};
\node (MT) at (4, 1.5) {$\sigma^m(F)$};
\node (MB) at (4, 0) {$C_m = \sigma^m(C_0)$};
\node (RT) at (7, 1.5) {$C_k$};
\node (RB) at (7, 0) {$C_j,$};
\draw [->] (LT) -- (LB);
\draw [->] (LT) --  (MT);
\draw [->] (MT) --  (RT);
\draw [->] (MT) --  (MB);
\draw [->] (RT) -- node [right] {$\psi = \sigma^m(f)$} (RB);
\draw [right hook->] (MB) -- node [below] {$\sigma^m(g)$} (RB);
\draw [->] (LB) -- node [below] {$\sigma^m(!)$} (MB);
\node at (0.5, 1) {$\ulcorner$};
\node at (4.5, 1) {$\ulcorner$};
\end{tikzpicture}
\end{center}
where as above $F$ denotes the fiber of $f: C_{k-m} \to C_{j-m}$ over the image of $g$.
So let us consider each of the cases A-D of Rk. \ref{rmk:fiberprodofcells} in turn.
\begin{enumerate}[(A)]
\item If $F = \emptyset$, then 
\begin{equation*}
C_i\times_{C_j}C_k\cong\sigma^m(C_{i-m}\times_{C_0}F)\cong\sigma^m(\emptyset)\cong\partial C_m.
\end{equation*}
In this case, the morphisms of $A\subset S_{00}$ provide an iterative construction of $\nu\partial C_m$ as a colimit in $S_{00}^{-1}\Fun(\Upsilon_n^{\op},\set)$ of cells.
\item Next, if $F\cong C_0$, then  
\begin{equation*}
C_i\times_{C_j}C_k\cong\sigma^m(C_{i-m}\times_{C_0}F)\cong\sigma^m(C_{i-m})\cong C_i
\end{equation*}
is already a cell.
\item Similarly, if $F\cong C_{k-m}$, but $i=m$, then
\begin{equation*}
C_i\times_{C_j}C_k\cong\sigma^m(C_{0}\times_{C_0}F)\cong\sigma^m(C_{k-m})\cong C_k
\end{equation*}
is again already a cell.
\item Finally, let us suppose that $F\cong C_{\ell}$ with $i=m+p$ and $k=m+\ell$ for $p>0$. In this case we have,
\begin{equation*}
C_i\times_{C_j}C_k\cong C_{m+p}\times_{C_m} C_{m+\ell}
\end{equation*}
is precisely the fiber product considered in the set $C\subset S_{00}$.
One readily observes that morphisms of $B$ and $C$ provide an inductive construction of this fiber product as an iterated colimit of cells in $\C$.\qedhere
\end{enumerate}
\end{proof}
\end{lemma}


\part{The moduli space of theories of $(\infty,n)$-categories}

\section{The Unicity Theorem}\label{sect:axioms}

We now introduce our axioms for the theory of $(\infty,n)$-categories.

\begin{basic-data*}
	We assume that $\C$ is a presentable $\infty$-category equipped with a fully faithful functor $f\colon\gaunt_n^{\omega}\hookrightarrow\C$.
\end{basic-data*}

The first axiom states that every object of $\C$ can be written as a colimit of gaunt $n$-categories in a canonical manner; this is called \emph{strong generation.}

\begin{axiom*}[\hypertarget{axiom:C.1}{C.1: Strong generation}] The functor $f$ is \emph{dense}, or, equivalently in the language of \cite[4.4.2]{G}, it \emph{strongly generates} $ \C $.
	That is, the left Kan extension of $f$ along itself is the identity functor on $\C$.
\end{axiom*}

Equivalently, any object $X\in\C$ is the canonical colimit of the gaunt $n$-categories mapping to it:
\[
\colim_{H\in\gaunt_n/X}H\simeq X.
\]
This is equivalent to the condition that the functor $ f $ induces an localization $ \pre(\gaunt_n) \to \C $;
that is, $\C $ can be written as $W^{-1}\pre(\gaunt_n)$ for some class of maps $W$ of small generation (\cite[Remark 20.4.1.5]{SAG}, \cite[Proposition 5.5.4.16]{HTT}).

The next axiom states that every object of $\C$ can be written as a colimit of objects of $\G_n$, but not necessarily in this canonical manner.

\begin{axiom*}[\hypertarget{axiom:C.2}{C.2: Weak generation}] If $\E\subseteq\C$ is a full subcategory that contains the image $f(\G_n)$ and is closed under colimits, then $\E=\C$.
\end{axiom*}

\begin{remark}\label{rmk:CellsDetect}
	In any $\infty$-category $\C$ which satisfies Axiom C.2 the cells detect equivalences. That is $f: X \to Y$ is an equivalence in $\C$ if and only if it induces  equivalences 
	$\map(C_k, X) \to \map(C_k, Y)$
for all $0 \leq k \leq n$. This is clear, since for such a map the full subcategory of those  $H \in \C$ such that $\map(H, X) \to \map(H,Y)$ is an equivalence is stable under colimits and contains the cells, and is thus all of $\C$. 
\end{remark}
%
%
%
%

We also demand that $\C$ admit internal Homs for correspondences.
\begin{axiom*}[\hypertarget{axiom:C.3}{C.3: Internal Homs for correspondences}] For any morphism $\eta\colon X \to f(C_i)$ of $\C$, the fiber product functor
\begin{equation*}
\eta^*\colon \C_{/f(C_i)} \to \C_{/X}
\end{equation*}
preserves colimits.
\end{axiom*}
\noindent The Adjoint Functor Theorem \cite[Corollary 5.5.2.9]{HTT} implies that this is equivalent to the existence of internal homs for the categories of correspondences $\C_{/f(C_i)}$.

Equivalently, Axiom C.3 states that every morphism to $ C_k $ is, in the language of Ayala--Francis \cite{AyalaFrancis}, an exponentiable fibration.
(We are grateful to the referee for suggesting this observation.)

We introduce a special collection of maps of $\C$. Denote $S_{00}$ consist of the union $A\cup B\cup C\cup D$ of the following four finite sets of maps of $\C$:
\begin{equation*}
A\coloneq\left\{f(C_{i-1})\cup^{f(\partial C_{i-1})}f(C_{i-1})\to f(\partial C_{i}) \; | \; 0 \leq i \leq n-1 \right\}
\end{equation*}
(when $i=0$, we interpret this as the initial object of $\C$ mapping to the image under $f$ of the empty $n$-category),
\begin{equation*}
B\coloneq\left\{f(C_j)\cup^{f(C_i)}f(C_j)\to f(C_j \cup^{C_i} C_j) \; | \; 0 \leq i < j \leq n \right\},
\end{equation*}
\begin{equation*}
\begin{split}
C\coloneq\Big\{f(C_{i+j}\cup^{C_i}C_{i+k})\cup^{f(\sigma^{i+1}(C_{j-1} \times C_{k-1}))}f(C_{i+k}&\cup^{C_i}C_{i+j})\to f(C_{i + j} \times_{C_i} C_{i + k})\\
&\Big|\;0\leq i\leq n\textrm{, }0< j,k\leq n-i \Big\},
\end{split}
\end{equation*}
and, lastly,
\begin{equation*}
D\coloneq\left\{f(\sigma^k([3]))\cup^{f(\sigma^k({\{0,2\}}\sqcup{\{1,3\}}))}f(\sigma^k([0]\sqcup[0]))\to f(C_k) \; | \; 0 \leq k \leq n\right\}.
\end{equation*}

\begin{axiom*}[\hypertarget{axiom:C.4}{C.4: Fundamental pushouts}] Each of the finite number of maps comprising $S_{00}$ is an equivalence.	
\end{axiom*}

Finally, we will require that $\C$ be \emph{versal} with Axioms C.1--4, so that any other presentable $\infty$-category satisfying Axioms C.1--4 admits some left adjoint from $\C$:
\begin{axiom*}[\hypertarget{axiom:C.5}{C.5: Versality}] For any $\infty$-category $\D$ and any fully faithful functor $g\colon\gaunt_n^{\omega}\hookrightarrow\D$ satisfying Axioms C.1--4, there exist a left adjoint $K\colon \C \to \D$ and a natural transformation $\xi\colon K\circ f \to g$ such that $\xi|_{\G_n}\colon K\circ f|_{\G_n} \to g|_{\G_n}$ is an equivalence.
\end{axiom*}

\begin{definition} A pair $(\C,f)$ satisfying these axioms (C.1--5) will be said to be a \emph{theory of $(\infty,n)$-categories.} We define a subcategory $\thy_{(\infty,n)}$ of the $\infty$-category of $\infty$-categories: the objects are $\infty$-categories $\C$ that underlie a theory $(\C,f)$ of $(\infty,n)$-categories, and the morphisms are equivalences of these $\infty$-categories. 
\end{definition}

The $\infty$-category $\thy_{(\infty,n)}$ is an $\infty$-groupoid and thus a homotopy type; it is the \emph{moduli space of theories of $(\infty,n)$-categories}. Our main theorem is a computation of this homotopy type.
\begin{theorem}[Unicity]\label{theorem:unicity} One has
\[
\thy_{(\infty,n)}\simeq B(\mathbb{Z}/2)^n.
\]
\end{theorem}

The proof will occupy the next few sections, and is organized as follows. 
\begin{enumerate}
	\item In Section~\ref{sec:colossal_model} we introduce a particular $\infty$-category and verify that it satisfies Axioms C.1-5. This shows that $\thy_{(\infty,n)}$ is non-empty, and we denote our chosen model by $\cat_{(\infty,n)}$. For Axioms C.1-4 this means simply varifying the corresponding properties of our model. For Axiom C.5, Versality, we suppose another $\infty$-category $\D$ satisfying Axioms C.1-4 and must then show how those axioms guerentee the existence of the desired functor and transformation out of $\cat_{(\infty,n)}$. In fact a stronger versality property holds: $\D$ need only satisfy C.1, C.3, and C.4.
	\item In Section~\ref{sec:connected} we show that the space $\thy_{(\infty,n)}$ is connected, i.e., any two $\infty$-categories satisfying C.1-5 are equivalent. The proof relies essentially on our $\infty$-categories satisfying Axioms C.1, C.2, and C.5. We note, however, that Axiom C.5 is quantified over the other four axioms, and so this step in fact relies on all five axioms.
	\item Having shown $\thy_{(\infty,n)}$ is non-empty and connected, it remains to compute its loopspace. This is done in Section~\ref{sec:loopspace} by directly computing the automorphism space of our prefered model. 
\end{enumerate}


\section{A colossal model of $(\infty,n)$-categories} \label{sec:colossal_model}

We will now construct a theory of $(\infty,n)$-categories -- i.e., an $\infty$-category that satisfies \hyperlink{axiom:C.1}{Axioms (C.1--5)}.
The axioms of Strong Generation, Internal Homs for correspondences, and Fundamental pushouts together suggest the following definition.

\begin{definition}
	Let $T_0$ be the smallest set of morphisms of $\pre(\gaunt_n^{\omega})$ that
	\begin{itemize}
		\item contains the morphisms of Notation \ref{Note:FundPushouts} (i.e., the morphisms that represent the fundamental pushouts of \hyperlink{axiom:C.3}{Axiom (C.3)}), and
		\item $T_0$ is stable under the operation $ H \times_{C_i} (-) $ for $ H \in \gaunt_n^{\omega}$.
	\end{itemize}
	Let $T$ be the saturated class of morphisms generated by $T_0$.

	Define the $\infty$-category of \emph{$(\infty,n)$-precategories} as the localization
	\[
		\precat_{(\infty,n)} \coloneq T^{-1}\pre(\gaunt_n^{\omega}) .
	\]
\end{definition}

One might begin to feel optimistic that perhaps $ \precat_{(\infty,n)} $ already satisfies our axioms;
indeed, it is easy to see that this $\infty$-category satisfies \hyperlink{axiom:C.1}{Axiom (C.1)} and \hyperlink{axiom:C.4}{Axiom (C.4)}.
As it happens, the closure of $T_0$ under fiber products over cells will ensure that it satisfies \hyperlink{axiom:C.3}{Axiom (C.3)} as well.
Nevertheless, it does not satisfy \hyperlink{axiom:C.2}{Axiom (C.2)}.
We can address this problem directly:
\begin{definition}
	The $\infty$-category $ \cat_{(\infty,n)}$ is the smallest subcategory that contains the cells $C_i$ (for $0\leq i\leq n$) and is closed under colimits.
\end{definition}

Unfortunately, by enforcing \hyperlink{axiom:C.2}{Axiom (C.2)}, we have lost our trivial proof of \hyperlink{axiom:C.1}{Axiom (C.1)}: the inclusion
\[
	\cat_{(\infty,n)} \hookrightarrow \precat_{(\infty,n)}
\]
preserves all colimits, so it admits a \emph{right} adjoint, but
it does not follow directly from this that our category is a localization of presheaves on $\gaunt_n^{\omega}$.
To guarantee this, we need a \emph{further} right adjoint.

To construct these adjoints, we employ our category $\Upsilon_n$ of Definition \ref{Def:Upsilon}.
\begin{notation} \label{Not:S_o-quasicat}
	We consider the $\infty$-category $\pre(\Upsilon_n)$ of presheaves on the category $\Upsilon_n$ of Definition \ref{Def:Upsilon} and the Yoneda embedding
	\begin{equation*}
		f\colon\Upsilon_n\hookrightarrow\tau_{\leq 0}\pre(\Upsilon_n)\hookrightarrow\pre(\Upsilon_n).
	\end{equation*}
	Let $S_{00}$ denote the image of the finite set of morphisms of the same name as defined in Notation \ref{Note:FundPushouts}, which also represent the morphisms that appeared in (C.3). 
	Let $S_0$ be the smallest class of morphisms of $\pre(\Upsilon_n)$ that is stable under equivalence, contains $S_{00}$, and is stable under the operation $X \times_{C_i}(-)$ for $X \in \Upsilon_n$. One may check that $S_0$ has countably many isomorphism classes of maps and agrees with the essential image of the class $S_0$ introduced in Notation \ref{Note:FundPushouts}. Let $S$ be the strongly saturated class of morphisms of $\pre(\Upsilon_n)$ generated by the class $S_0$.
	Let us study the localization $ S^{-1}\pre(\Upsilon_n)$.
	
	The inclusion $ j \colon \Upsilon_n \hookrightarrow \gaunt_n $ induces a functor $ j^{\ast} \colon \pre(\gaunt_n^{\omega}) \to \pre(\Upsilon_n) $, which admits a left adjoint $j_!$ (given by left Kan extension) and a right adjoint $j_{\ast} $ (given by right Kan extension).
	Since $j_!$ and $j^{\ast}$ each preserve those presheaves represented by objects of $\Upsilon_n$ as well as all colimits, it follows that
	\[
		j_!(S) \subseteq T \quad \text{and} \quad j^{\ast}(T) \subseteq S .
	\]
	Consequently,
	\[
		j^{\ast}(\precat_{(\infty,n)}) \subseteq S^{-1}\pre(\Upsilon_n) \quad \text{and} \quad j_{\ast}(S^{-1}\pre(\Upsilon_n)) \subseteq \precat_{(\infty,n)} .
	\]
	And so $j^{\ast} \colon \precat_{(\infty,n)} \to S^{-1}\pre(\Upsilon_n) $ admits a left adjoint $L_Tj_!$ (where $L_T$ is the localization $\pre(\gaunt_n^{\omega}) \to T^{-1}\pre(\gaunt_n^{\omega}) = \precat_{(\infty,n)} $) and a right adjoint $j_{\ast}$.

\end{notation}

\begin{lemma} \label{lem:fully-faithfulj}
	The restriction of the functor $ j^{\ast} $ to $ \cat_{(\infty,n)}$ is fully-faithful.
\end{lemma}

\begin{proof}
	Since the cells are contained in $\Upsilon_n$, it follows that $j_!j^*C_i \simeq C_i$. 
	Suppose that $Y \in \precat_{(\infty,n)}$; then the unit $Y \to j_{\ast}j^{\ast}Y$ induces an equivalence
		\[
			 \map(C_i, Y) \simeq \map(j_! j^* C_i, Y) \simeq  \map(C_i, j_{\ast}j^{\ast}Y) 
		\]
		for any cell $C_i$.
		
	Now consider the smallest subcategory of $ \cat_{(\infty,n)}$ consisting of objects $X$ such that the unit map induces an equivalence $\map(X,Y) \simeq \map(X,  j_{\ast}j^{\ast}Y) $	for all $Y \in \precat_{(\infty,n)}$. As we have seen this subcategory contains the cells. It is also closed under colimits since if we write $X \simeq \colim_{\alpha} X_{\alpha}$, where all the $X_{\alpha}$ are in this subcategory, then
	\[
		\map(X,Y) \simeq \lim_{\alpha} \map(X_{\alpha}, Y) \simeq \lim_{\alpha} \map(X_{\alpha}, j_{\ast}j^{\ast}Y) \simeq \map(X,j_{\ast}j^{\ast}Y).
	\]
	It follows that this subcategory is all of $ \cat_{(\infty,n)}$, and thus $j^{\ast}$ induces an equivalence $\map(X,Y) \simeq \map(j^{\ast}X,j^{\ast}Y)$.
\end{proof}

Before we continue to show that the restriction of $j^*$ to $ \cat_{(\infty,n)}$ is essentially surjective, we will first show that $S^{-1}\pre(\Upsilon_n)$ satisfies \hyperlink{axiom:C.3}{Axiom (C.3)}. We will formulate this as a proposition.

\begin{proposition} \label{prop:Axiom4reduction}
	Let $\R$ be a small category, and let $i\colon \R \to \gaunt_n^\omega $ be a functor.
	Let $K$ be a strongly saturated class of morphisms in $\pre(\R)$ of small generation.
	Denote by $i^*\colon\pre(\gaunt_n^\omega)\to\pre(\R)$ the precomposition with the functor $i$.
	Consider $\C \coloneq U^{-1}\pre(\R)$ along with the restriction $f$ of $i^{\ast}$ to the representable objects;
	then the pair $(\C,f)$ satisfies \hyperlink{axiom:C.3}{Axiom (C.3)} if and only if $ K $ enjoys the following condition.
\end{proposition}

\begin{enumerate}[{\hspace{\parindent}(C.}1{-bis)}]\addtocounter{enumi}{2}
	\item \hypertarget{axiom:C.3-bis}{}
		There is a subset $K_0\subset K$ that generates $K$ as a strongly saturated class for which the following condition holds.
		For any object $W$ of $\R$, any functor $i(W)\to C_k$ of $\gaunt_n$, any morphism $U\to V$ of $K_0$, and any morphism $V\to C_k$ of $\pre(\R)$, the induced morphism
		\begin{equation*}
			U\times_{C_k}i(W)\to V\times_{C_k}i(W)
		\end{equation*}
		lies in $K$.
\end{enumerate} 

\begin{proof}
	For any $K$-local object $X$ of $\pre(\R)$, a morphism $Y\to X$ represents an object of $(K^{-1}\pre(\R))_{/X}$ if and only if, for any morphism $U\to V$ of $K_0$, the square
\begin{center}
 \begin{tikzpicture}
 	\node (LT) at (0, 1) {$\map(V,Y)$};
 	\node (LB) at (0, 0) {$\map(V,X)$};
 	\node (RT) at (3, 1) {$\map(U,Y)$};
 	\node (RB) at (3, 0) {$\map(U,X)$.};
 	\draw [->] (LT) -- node [left] {$ $} (LB);
 	\draw [->] (LT) -- node [above] {$ $} (RT);
 	\draw [->] (RT) -- node [right] {$ $} (RB);
 	\draw [->] (LB) -- node [below] {$ $} (RB);
 \end{tikzpicture}
\end{center}
is homotopy cartesian, since the horizontal map at the bottom is an equivalence. For this, it suffices to show that the induced map on homotopy fibers over any vertex of $\map(V,X)$ is an equivalence. Unpacking this, we obtain the condition that for any morphism $V\to X$, the map
\begin{equation*}
\map_{/X}(V,Y)\to \map_{/X}(U,Y)
\end{equation*}
is a weak equivalence. We therefore deduce that $(K^{-1}\pre(\R))_{/X}$ may be exhibited as a localization $K_X^{-1}(\pre(\R)_{/X})$, where $K_X$ is the strongly saturated class generated by the set of diagrams of the form
\begin{center}
 \begin{tikzpicture}
 	\node (LT) at (0, 1) {$U$};
 	\node (B) at (1, 0) {$X$};
 	\node (RT) at (2, 1) {$V$};
 	\draw [->] (LT) -- node [left] {$ $} (B);
 	\draw [->] (LT) -- node [above] {$\phi$} (RT);
 	\draw [->] (RT) -- node [right] {$ $} (B);
 \end{tikzpicture}
\end{center}
in which $\phi\in K_0$.

Now suppose $\eta\colon Z\to C_k$ a morphism of $K^{-1}\pre(\R)$. Since colimits are universal in $\pre(\R)$ \cite[\S~6.1.1]{HTT}, the functor
\begin{equation*}
\pre(\R)_{/C_k}\to\pre(\R)_{/Z}
\end{equation*}
given by pullback along $\eta$ preserves all colimits, and the universal property of localizations guarantees that the composite
\begin{equation*}
\pre(\R)_{/C_k}\to\pre(\R)_{/Z}\to K_Z^{-1}(\pre(\R)_{/Z})\simeq(K^{-1}\pre(\R))_{/Z}
\end{equation*}
descends to a colimit-preserving functor
\begin{equation*}
(K^{-1}\pre(\R))_{/C_k}\simeq K_{C_k}^{-1}(\pre(\R)_{/C_k})\to K_Z^{-1}(\pre(\R)_{/Z})\simeq(K^{-1}\pre(\R))_{/Z}
\end{equation*}
(which then must also be given by the pullback along $\eta$) if and only if, for any diagram
\begin{center}
 \begin{tikzpicture}
 	\node (LT) at (0, 1) {$U$};
 	\node (B) at (1, 0) {$C_k$};
 	\node (RT) at (2, 1) {$V$};
 	\draw [->] (LT) -- node [left] {$ $} (B);
 	\draw [->] (LT) -- node [above] {$\phi$} (RT);
 	\draw [->] (RT) -- node [right] {$ $} (B);
 \end{tikzpicture}
\end{center}
in which $0\leq k\leq n$ and $\phi\in K_0$, the induced morphism $U\times_{C_k}Z\to V\times_{C_k}Z$ lies in $K$.

It is clear that it suffices to check this only for nondegenerate morphisms $V\to C_k$.
It now remains only to show that it suffices to check this for objects $Z$ among the essential image of $\R$.
This follows from the fact that the class $K$ is strongly saturated and the fact that $\R$ generates $\pre(\R)$ under colimits.
\end{proof}

The collection $\R = \gaunt_n^{\omega}$, $i=\id$, $K=T$, and $K_0=T_0$, satisfies \hyperlink{axiom:C.3-bis}{Axiom (C.3-bis)} by construction, whence  we may deduce that the pair consisting of the $\infty$-category $\precat_{(\infty,n)}$ and the Yoneda embedding $\gaunt_n^{\omega} \hookrightarrow \precat_{(\infty,n)}$ therefore satisfies \hyperlink{axiom:C.3}{Axiom (C.3)}.

Moreover the collection $\R = \Upsilon_n$, $i=j$, $K=S$, and $K_0=S_0$, also satisfies \hyperlink{axiom:C.3-bis}{Axiom (C.3-bis)} by construction, whence $( S^{-1}\pre(\Upsilon_n), g)$ also satisfies  \hyperlink{axiom:C.3}{Axiom (C.3)}, where $g$ is the composite of the Yoneda embedding and $j^*$.

\begin{lemma} \label{lma:YonnedaFactors}
	The Yoneda embedding $\Upsilon_n \to \pre(\Upsilon_n)$ factors through a fully-faithful inclusion
	\begin{equation*}
\Upsilon_n \hookrightarrow \tau_{\leq 0}\pre(\Upsilon_n).
\end{equation*}
	This induces a fully-faithful nerve functor
	\begin{equation*}
g: \gaunt_n \hookrightarrow \tau_{\leq 0}S^{-1} \pre(\Upsilon_n).
\end{equation*}
\end{lemma}

\begin{proof}
	The 0-truncated objects of $\pre(\Upsilon_n)$ are precisely those presheaves of spaces taking values in the 0-truncated spaces, i.e., functors $\Upsilon_n^{\op}\to\set$. The 0-truncated objects of $\cat_{(\infty,n)} = S^{-1} \pre(\Upsilon_n)$ consist of precisely those 0-truncated objects of $\pre(\Upsilon_n)$ which are $S$-local. By Lemma \ref{lma:gauntIsLocal}, the nerve of every gaunt $n$-category is $S$-local, and so the result follows.
\end{proof}

The restriction of $g$ to $\gaunt_n^{\omega}$ is the composition of $j^{\ast}$ with the Yoneda embedding $\gaunt_n^{\omega} \hookrightarrow \pre(\gaunt_n^{\omega}) $. This fully faithful functor will provide the ``Basic Data'' for our axiomatization.

\begin{proposition} \label{prop-restriction-equiv}
	The functor $ j^{\ast} $ restricts to an equivalence $ \cat_{(\infty,n)} \simeq S^{-1}\pre(\Upsilon_n)$.
\end{proposition}

\begin{proof}
	Lemma~\ref{lem:fully-faithfulj} shows that the restriction of $j^{\ast}$ to $\cat_{(\infty,n)}$ is fully faithful.
		Now to prove that $j^{\ast}$ is essentially surjective when restricted to $\cat_{(\infty,n)}$, it suffices to prove that $S^{-1}\pre(\Upsilon_n)$ is generated under colimits by the cells.
	Since every object is a colimit of representables, it suffices to prove that $\Upsilon_n$ itself is generated under colimits in $S^{-1}\pre(\Upsilon_n)$ by the cells.
	To prove this, we filter $\Upsilon_n$ in the following manner.

	Let $\Upsilon_n^{(0)} = \G_n$  be the globular category of cells. For any $k\geq 1$,
	define $\Upsilon_n^{(k)}$ to be the full subcategory of $\Upsilon_n$ spanned by the set
	\begin{equation*}
		\left\{ X \in \Upsilon_n \; \middle| \; 
		\begin{aligned}
			& \textrm{there exists a colimit diagram }f\colon K^\rhd \to S^{-1}\pre(\Upsilon_n) \\
			& \textrm{such that }f(+\infty) \simeq X\textrm{ and }f(K)\subset\Upsilon_n^{(k-1)} 
		\end{aligned}
		 \right\}.
	\end{equation*}
	That is, $\Upsilon_n^{(k)}\subset\Upsilon_n$ consists of colimits, formed in $S^{-1}\pre(\Upsilon_n)$, of diagrams of objects of $\Upsilon_n^{(k-1)}$. 

	We claim that the collection $\{ \Upsilon_n^{(k)} \}$ forms an exhaustive filtration of $\Upsilon_n$, so that we have $\cup_k \Upsilon_n^{(k)} = \Upsilon_n$. 
	First we observe that the strongly saturated class $S$ contains the map 
	\begin{equation*}
		  \sigma( i(o_1)) \cup^{C_0} \sigma( i(o_2)) \cup^{C_0} \cdots  \cup^{C_0} \sigma( i(o_m)) \to i([m]; o_1, \dots, o_m)
	\end{equation*}
	and thus, by induction, the union $\cup_k \Upsilon_n^{(k)}$ contains $\Theta_n$. 
	
	It now suffices to show that this union is closed under fiber products over cells.
	Since colimits commute with fiber products over cells (both in $\gaunt_n$ and $S^{-1}\pre(\Upsilon_n)$) it is sufficient to show that $C_j \times_{C_i} C_k$ is contained in the union for all $i,j, k \leq n$.
	The fiber products of cells were analyzed in detail in Remark \ref{rmk:fiberprodofcells} and the proof of Lemma \ref{lma:fiberproductsareColimits}, where it was shown that they can all be obtained from the cells by a finite number of the colimits provided by $S_{00}$.
	These are colimits in $\cat_{(\infty,n)}$, whence the result follows.
\end{proof}

\begin{corollary}
	The right adjoint $ R \colon \precat_{(\infty,n)} \to \cat_{(\infty,n)} $ to the inclusion is identified with $j^{\ast}$ under the equivalence above.
	In particular, it admits both a left adjoint $L_Tj_!$ and a right adjoint $j_{\ast}$.
\end{corollary}

\begin{proof}
	Let $X \in \precat_{(\infty,n)}$ be an object, and consider the map $RX \to X$.
	The claim is that $j^*RX \to j^*X$ is an equivalence.
	Since the cells generate $S^{-1}\pre(\Upsilon_n)$ under colimits, it's enough to observe that $\map(C_i, RX) \simeq (RX)(C_i) \to X(C_i) \simeq Map(C_i, X)$ is an equivalence, for any cell $C_i$.
\end{proof}

\begin{corollary}
	The $\infty$-category $\cat_{(\infty,n)}$ is a localization of $\pre(\gaunt_n^{\omega})$, and so $(\cat_{(\infty,n)},g)$ satisfies \hyperlink{axiom:C.1}{Axiom (C.1)}.
\end{corollary}

By definition, $\cat_{(\infty,n)}$ is generated under colimits by the cells;
in other words, $(\cat_{(\infty,n)},g)$ satisfies \hyperlink{axiom:C.2}{Axiom (C.2)}.
 By Prop.~\ref{prop:Axiom4reduction} and Prop.~\ref{prop-restriction-equiv}, $(\cat_{(\infty,n)},g)$ satisfies \hyperlink{axiom:C.3}{Axiom (C.3)}. 
Also by construction, the pair $(\cat_{(\infty,n)},g)$ satisfies \hyperlink{axiom:C.4}{Axiom (C.4)}.
Finally, we now aim to prove that the pair $(\cat_{(\infty,n)},g)$ satisfies the final versality axiom.
Here is the key point.
\begin{proposition}
	Let $(\C,f)$ be a pair consisting of a presentable $\infty$-category $\C$ and a fully faithful functor $f \colon \gaunt_n^{\omega} \hookrightarrow \C $ for which Axioms \hyperlink{axiom:C.1}{(C.1)}, \hyperlink{axiom:C.3}{(C.3)}, and \hyperlink{axiom:C.4}{(C.4)} hold.
	Then there is a left adjoint $ K \colon \cat_{(\infty, n)} \to \C $ and a natural transformation $ \eta \colon K g \to f$ that the restriction $\eta|\G_n$ is an equivalence.
\end{proposition}

\begin{proof}
	By \hyperlink{axiom:C.1}{Axiom (C.1)}, the left Kan extension of $f$ along the Yoneda embedding is a localization $ F \colon \pre(\gaunt_n^{\omega}) \to \C $;
	the right adjoint is a fully faithful functor $ G \colon \C \hookrightarrow \pre(\gaunt_n^{\omega})$.
	Write $W$ for the class of morphisms of $\pre(\gaunt_n^{\omega})$ that are carried to equivalences of $\C$ by $F$, so that $\C \simeq W^{-1}\pre(\gaunt_n^{\omega})$.
	The class $W$ is strongly saturated, and by \cite[Proposition 5.5.4.16]{HTT}, it is of small generation.

	By \hyperlink{axiom:C.4}{Axiom (C.4)}, the class $W$ contains the morphisms of Notation \ref{Note:FundPushouts}.
	We claim further that $T_0 \subseteq W$.
	To prove this, it suffices to show that $W$ is stable under the operation $H\times_{C_i} (-) $ for any $H\in \gaunt_n^{\omega}$.
	
	So let $W' \subseteq W$ be the subset consisting of those morphisms $ \phi \colon X \to Y $ of $W$ such that for any morphism $ Y \to C_i $ and any morphism $H \to C_i$ of $\gaunt_n^{\omega}$, the pullback $H \times_{C_i}\phi \colon H\times_{C_i} X \to H\times_{C_i} Y$ also lies in $W$.
	By the universality of colimits in $\pre(\gaunt_n^{\omega})$, it follows that $W'$ is closed under colimits.
	From Proposition \ref{prop:Axiom4reduction} for $\R = \gaunt_n^{\omega}$, $i=\id$, and $U=W$, we deduce that since $(\C, f)$ satisfies \hyperlink{axiom:C.3}{Axiom (C.3)}, there is a subset $W_0 \subseteq W $ that generates $W$ under colimits and is stable under the operation $H\times_{C_i} (-) $ for any $H\in \gaunt_n^{\omega}$.
	Hence $W_0 \subseteq W'$, and so $W'=W$.

	Since $T_0 \subseteq W$ (and thus $T \subseteq W$), it follows that $F$ factors through a left adjoint $\precat_{(\infty,n)} \to \C$, which by a small abuse we will also call $F$.
	Composing this left adjoint with the fully faithful left adjoint $j_! \colon \cat_{(\infty,n)} \hookrightarrow \precat_{(\infty,n)} $, we obtain our desired left adjoint $K \coloneq F j_!$.

	To construct the desired natural transformation $\eta$, compose the counit $j_!j^* \to \id$ with $F$ to obtain $K j^* \to F$, and then restrict along Yoneda to obtain $ \eta \colon K g \to f $.
	By definition, $\eta$ is an equivalence when restricted to $\Upsilon_n$, and thus \emph{a fortiori} when restricted to $\G_n$.
\end{proof}

\begin{corollary}
	The pair $(\cat_{(\infty,n)},g)$ is a theory of $(\infty,n)$-categories, and so $\thy_{(\infty,n)}$ is nonempty.
\end{corollary}

\section{The connectedness of the space of theories} \label{sec:connected} \noindent In this section, we will prove:
\begin{theorem}[Versal is Universal]\label{theorem:thyisconnected} The moduli space of theories of $(\infty,n)$-categories $\thy_{(\infty,n)}$ is connected.
\end{theorem}

\noindent First we introduce a lemma.

\begin{lemma} \label{lma:stronggenretract} Suppose $\D$ a small quasicategory, and suppose $\C$ a locally small quasicategory that admits small colimits.
Suppose $g\colon \D\to \C$ a dense functor. For any quasicategory $\E$ admitting all small colimits, and let $\Fun^{\LL}(\C, \E)\subset\Fun(\C,\E)$ denote the full sub-quasicategory consisting of those functors that preserve small colimits.
Then the functor $\Fun(\C,\E)\to\Fun(\D,\E)$ induced by $g$ restricts to a fully faithful functor
\begin{equation*}
\Fun^{\LL}(\C,\E)\to\Fun(\D,\E).
\end{equation*}
\end{lemma}

\begin{proof} The $\infty$-category $\C$ is a localization of $\pre(\D)$, whence we obtain a fully faithful embedding $\Fun^{\LL}(\C, \E) \to \Fun^{\LL}(\pre(\D), \E)$.
Now by \cite[5.1.5.6]{HTT}, left Kan extension induces an equivalence $\Fun(\D,\E) \simeq \Fun^{\LL}(\pre(\D), \E)$.
See \cite[5.5.4.20]{HTT}. 
\end{proof}

\begin{proof}[Proof of Theorem~\ref{theorem:thyisconnected}]
	Suppose that $(\C,f)$ and $(\D,g)$ are each theories of $(\infty,n)$-categories; that is, they each satisfy axioms C.1-5.
		 By the versality axiom C.5 we have left adjoints
	\begin{equation*}
		L_1\colon \C \to \D \qquad \textrm{and} \qquad L_2\colon \D \to \C
	\end{equation*}
	and natural transformations $\eta_1\colon L_1 \circ f \to g$ and $\eta_2\colon L_2 \circ g \to f$ such that $\eta_i|_{\G_n}$ is an equivalence. Then the theorem follows provided  that we demonstrate that both $L_1 \circ L_2$ and $L_2 \circ L_1$ are autoequivalences. We will show this for $L_2 \circ L_1$. The argument for $L_1 \circ L_2$ is identical. 

Thus $E\coloneq L_2 \circ L_1: \C \to \C$ is a colimit preserving endofunctor along with a natural transformation $\eta_2\circ L_2(\eta_1)\colon E\circ f\to f$. Since $f$ is dense, Lemma~\ref{lma:stronggenretract} ensures that there is a natural transformation $\eta\colon E\to \id$ whose composition with $f$ is $\eta_2\circ L_2(\eta_1)$. To see that $\eta$ is an equivalence, let $\E\subseteq\C$ be the full subcategory spanned by those $X$ such that $\eta_X$ is an equivalence. Since $E$ preserves colimits, $\E$ is stable under colimits, and since $\eta_2\circ L_2(\eta_1)|_{\G_n}$ is an equivalence, it follows from (C.2) that $\E=\C$.	
\end{proof}


\section{The loopspace of the space of theories} \label{sec:loopspace} To complete the proof of Theorem \ref{theorem:unicity}, we now compute the loopspace of $\thy_{(\infty,n)}$ -- i.e., the space of autoequivalences of $\cat_{(\infty,n)}$. In this section we will prove:
\begin{theorem}\label{thm:SpaceOfTheoriesIsBZ/2} The full subcategory $\Aut(\cat_{(\infty,n)})\subset\Fun(\cat_{(\infty,n)},\cat_{(\infty,n)})$ spanned by the autoequivalences is the discrete set $(\Z/2)^n$.
\end{theorem}

We begin by analysing the subcategory $\tau_{\leq 0}\cat_{(\infty,n)}$ of $0$-truncated objects of $\cat_{(\infty,n)}$. We have already seen (Lemma \ref{lma:YonnedaFactors}) that $\gaunt_n$ embeds as a full subcategory of $\tau_{\leq0}\cat_{(\infty,n)}$. We now show that this embedding is an equivalence.

\begin{lemma} \label{cor:0-TruncatedAreGaunt}
	There is an identification $\tau_{\leq 0} \cat_{(\infty,n)} \simeq \gaunt_n$. In particular a presheaf of sets on $\Upsilon_n$ is isomorphic to the nerve of a gaunt $n$-category if and only if it is $S$-local. 
\end{lemma}

\begin{proof} The nerve of a gaunt $n$-category is $S$-local (cf. Lemma \ref{lma:gauntIsLocal}). Conversely, for any $X \in \tau_{\leq 0 }\cat_{(\infty,n)} \subseteq \Fun(\Upsilon_n^{\op}, \set)$, we may restrict to $\G_n$ to obtain a globular set $H_X$. For $0\leq j<i\leq n$, apply $X$ to the unique nondegenerate $i$-cell $\mu\colon C_i \to C_i \cup^{C_j} C_i$ connecting the initial and terminal vertices; this gives rise to the various compositions
\begin{equation*}
X(C_i)\times_{X(C_j)}X(C_i)\cong X(C_i\cup^{C_j}C_i)\to X(C_i).
\end{equation*}
By examining the maps
\begin{equation*}
X(C_i)\times_{X(C_j)}X(C_i)\times_{X(C_j)}X(C_i)\cong X(C_i\cup^{C_j}C_i\cup^{C_j}C_i)\to X(C_i)
\end{equation*}
corresponding to the unique nondegenerate $i$-cell
\begin{equation*}
C_i \to C_i \cup^{C_j} C_i\cup^{C_j} C_i
\end{equation*}
connecting the initial and terminal vertices, we find that these compositions are associative, and by examining the maps $X(C_j)\to X(C_i)$ induced by the nondegenerate cell $C_i\to C_j$, we find that these compositions are unital. From this we deduce that $H_X$ forms a strict $n$-category. Finally, since $X$ is local with respect to $K_k\to C_k$, it follows that $H_X$ is gaunt. Now map $A \to X$, with $A \in \Upsilon_n$ induces a map $A \to \nu H_X$, and hence we have a map $X \to \nu H_X$ in $\tau_{\leq 0 }\cat_{(\infty,n)}$. By construction this is a cellular equivalence, whence $X \simeq \nu H_X$.
\end{proof}

\begin{proposition} \label{prop:AutoGroup}
	Let $\C$ be a presentable $\infty$-category for which there exists an equivalence $\tau_{\leq0}\C\simeq\gaunt_n$. Assume that $(\tau_{\leq 0} \C)^{\omega}$ is dense in $\C$. Then the $\infty$-category $\Aut(\C)$ is equivalent to the (discrete) group $(\Z/2)^{n}$.
\end{proposition}

\begin{proof}
    We observe that the existence of an equivalence $\tau_{\leq0}\C\simeq\gaunt_n$, Lemma \ref{lma:uniqueauto}, Lemma \ref{lem:Endoofgaunt}, and 
	Corollary \ref{cor:Aut_kappa_Gaunt_n}
	guarantee that $\Aut((\tau_{\leq 0}\C)^\omega)$ in $\Fun((\tau_{\leq 0}\C)^\omega,(\tau_{\leq 0}\C)^\omega)$ is equivalent to the discrete group $(\mathbb{Z}/2)^{\times n}$. It therefore suffices to exhibit an equivalence of $\infty$-categories $\Aut(\C)\simeq\Aut((\tau_{\leq0}\C)^\omega)$.

    Clearly $\Aut(\C)$ is contained in the full subcategory $\Fun^{\LL}(\C,\C)\subset\Fun(\C,\C)$ spanned by those functors that preserve small colimits. Since $(\tau_{\leq 0}C)^\omega$ is dense in $\C$, it follows from Lemma~\ref{lma:stronggenretract} that the inclusion $(\tau_{\leq0}C)^\omega\hookrightarrow C$ induces a fully faithful functor
    \begin{equation*}
\Fun^{\LL}(\C,\C)\hookrightarrow \Fun((\tau_{\leq 0}\C)^\omega,\C).
\end{equation*}
    Moreover, any autoequivalence of $\C$ restricts to an autoequivalence of $\tau_{\leq0}\C$ and hence an autoequivalence of $(\tau_{\leq 0} \C)^\omega$. Thus restriction furnishes us with a fully faithful functor from $\Aut(\C)$ to $\Aut((\tau_{\leq0}\C)^\omega)\simeq (\Z/2)^{n}$.
    
    It remains to show that the restriction functor is essentially surjective. For this, suppose $(\tau_{\leq0}\C)^\omega\to(\tau_{\leq0}\C)^\omega$ an autoequivalence. One may form the left Kan extension $\Phi\colon \C\to \C$ of the composite
\begin{equation*}
\phi\colon(\tau_{\leq0}\C)^\omega\to(\tau_{\leq0}\C)^\omega \hookrightarrow \C
\end{equation*}
    along the inclusion $(\tau_{\leq 0}\C)^\omega\hookrightarrow \C$. One sees immediately that $\Phi$ is an equivalence, and moreover its restriction to $(\tau_{\leq0}\C)^\omega$ coincides with $\phi$.
\end{proof}

Corollary \ref{cor:0-TruncatedAreGaunt} provides an identification $\tau_{\leq 0} \cat_{(\infty,n)} \simeq \gaunt_n$, and so Theorem \ref{thm:SpaceOfTheoriesIsBZ/2} now follows from Proposition \ref{prop:AutoGroup}.


\part{A recognition principle for categorical presentations}\label{part:examples}


\section{Presentations of $(\infty,n)$-categories} \label{sect:presheaves} The best-known examples of theories of $(\infty,n)$-categories are given by presentations in terms of generators and relations. In order to show that these examples also satisfy our axioms, we can compare them directly to our colossal model $\cat_{(\infty,n)}$ of $(\infty,n)$-categories. The main point is that $\Upsilon_n$ is so large that any `reasonable' set of generators can be compared to it. 

\begin{notation} Suppose $\R$ an ordinary category, and suppose $i\colon\R\to\Upsilon_n$ a functor. Suppose $T_0$ a set of morphisms of $\pre(\R)$, and write $T$ for the strongly saturated class of morphisms of $\pre(\R)$ it generates.
\end{notation}

\begin{theorem} \label{Thm:RecognitionForPresheaves}
	Suppose that the following conditions are satisfied.
	\begin{enumerate}[{\hspace{\parindent}(R.}1{)}]
		\item $i^*(S_0) \subseteq T$.
		\item $i_!(T_0) \subseteq S$.
		\item Any counit $R \to i^* i_!(R) = i^*(i(R))$ is in $T$ for any $R \in \R$.
		\item For each $0 \leq k \leq n$, there exists an object $R_k\in\R$ such that $i(R_k) \cong C_k \in \Upsilon_n$.
	\end{enumerate}
	Then $i^*\colon \cat_{(\infty,n)} \to T^{-1} \pre(\R)$ is an equivalence, and $T^{-1} \pre(\R)$ is a theory of $(\infty,n)$-categories. 
\end{theorem}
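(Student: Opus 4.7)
The plan is to establish that the adjunction $i_! \dashv i^*$ between presheaf quasicategories descends to an adjoint equivalence $\bar{i}_! \dashv \bar{i}^*$ between $T^{-1} \pre(\R)$ and $\cat_{(\infty,n)}$. Since axioms (C.1)--(C.4) are invariant under equivalence of quasicategories, the theorem then follows from Th.~\ref{Thm:ExistsUniversal}: the required fully faithful functor $\Upsilon_n \hookrightarrow \tau_{\leq 0} T^{-1}\pre(\R)$ is obtained by transporting the canonical embedding $\Upsilon_n \hookrightarrow \tau_{\leq 0} \cat_{(\infty,n)}$ along the equivalence induced by $\bar{i}^*$ on $0$-truncated objects.

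The descent itself is routine: $i^*$ has a right adjoint, hence preserves colimits, so (R.1) together with the strong saturation of $T$ gives $i^*(S) \subseteq T$; dually, $i_!$ is a left adjoint, and (R.2) gives $i_!(T) \subseteq S$. To check the descended unit is an equivalence, fix a $T$-local $Y \in T^{-1}\pre(\R)$ and factor the relevant composite as
\begin{equation*}
	Y \to i^*i_!Y \to i^*L_S i_!Y.
\end{equation*}
Every $Y \in \pre(\R)$ is a colimit of representables, and since both $i_!$ and $i^*$ preserve colimits, the first map is the colimit of the units at those representables. Each of those lies in $T$ by (R.3), so the colimit lies in $T$ by strong saturation. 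The second map is $i^*$ applied to the $S$-localization morphism $i_!Y \to L_Si_!Y$, which is a $T$-equivalence since $i^*$ carries $S$-equivalences to $T$-equivalences by (R.1).

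Since the unit is an equivalence, $\bar{i}_!$ is fully faithful, and the triangle identities reduce the question of whether the counit is an equivalence to the conservativity of $\bar{i}^*$. Suppose $f\colon X \to Y$ in $\cat_{(\infty,n)}$ is such that $\bar{i}^* f$ is a $T$-equivalence. For each $0 \leq k \leq n$, pick $R_k \in \R$ with $i(R_k) \cong C_k$ by (R.4) and map in from the representable $R_k$: because $\bar{i}^*X = i^*X$ is $T$-local, Yoneda identifies the resulting equivalence of mapping spaces with $X(C_k) \to Y(C_k)$, that is, with $\map_{\cat_{(\infty,n)}}(C_k, X) \to \map_{\cat_{(\infty,n)}}(C_k, Y)$. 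By Cor.~\ref{cor:CellsDetect}, $f$ is an equivalence in $\cat_{(\infty,n)}$. The main obstacle is precisely this conservativity step, which relies crucially on (R.4) together with the cell-detection principle established earlier.
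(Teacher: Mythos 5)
Your proposal is correct and follows essentially the same route as the paper's proof: (R.1)--(R.2) descend the adjunction to the localizations, (R.3) together with the closure of $T$ under colimits shows the unit of $L^S i_!\dashv i^*$ is invertible, and (R.4) plus Cor.~\ref{cor:CellsDetect} handles the other direction. The only (harmless) divergence is that where the paper invokes the auxiliary right adjoint $i_*$ and computes $\map(C_k, i_*i^*X)\simeq\map(C_k,X)$, you instead prove that $\bar{i}^*$ is conservative and appeal to the triangle identity --- the two arguments rest on the identical mapping-space computation, and your colimit-of-units treatment of the unit map is if anything a little more explicit than the paper's Kan-extension argument.
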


\begin{proof} Condition (R.1) implies both that $i_*$ carries $T$-local objects to $S$-local objects and that we obtain an adjunction:
	\begin{equation*}
		L^{T} \circ i^*\colon S^{-1} \pre(\Upsilon_n)  \rightleftarrows T^{-1} \pre(\R)\colon i_*. 
	\end{equation*}
	Similarly, condition (R.2) implies that $i^*$ carries $S$-local objects to $T$-local objects and that we obtain a second adjunction:
	\begin{equation*}
		L^{S} \circ  i_!\colon T^{-1} \pre(\R)  \rightleftarrows S^{-1} \pre(\Upsilon_n)\colon i^* .
	\end{equation*}
	Since $i^*$ sends $S$-local objects to $T$-local objects, $i^* \simeq L^{T} \circ i^*$ when restricted to the $S$-local objects of $\pre(\Upsilon_n)$. Thus $i^*\colon \pre(\Upsilon_n) \to \pre(\R)$ restricts to a functor
	\begin{equation*}
		i^*\colon \cat_{(\infty,n)} = S^{-1} \pre(\Upsilon_n) \to T^{-1} \pre(\R),
	\end{equation*}
that admits a left adjoint $L^S \circ i_!$ and a right adjoint $i_*$.

		Notice that $i_!(R) \cong i(R)$ in $\pre(\Upsilon_n)$, where we have identified $\R$ and $\Upsilon_n$ with their images under the Yoneda embedding in, respectively, $\pre(\R)$ and $\pre(\Upsilon_n)$. Thus by (R.3) the counit map applied to $r \in \R$,
\begin{equation*}
	R \to i^* \circ L^S \circ i_!(R) \cong i^* \circ L^S i(R)  \cong i^* i(R)
\end{equation*}	
 becomes an equivalence in $T^{-1} \pre(\R)$ (the last equality follows from Lemma \ref{lma:YonnedaFactors}, as the image of $i$ consists of $S$-local objects). 
The endofunctor $i^* \circ L^S \circ i_!\colon T^{-1} \pre(\R) \to T^{-1} \pre(\R)$ is a composite of left adjoints, hence commutes with colimits. Therefore, as $\R $ is dense in $T^{-1} \pre(\R)$, the functor $i^* \circ L^S \circ i_!$ is determined by its restriction to $\R$. It is equivalent the left Kan extension of its restriction to $\R$. Consequently $i^* \circ L^S \circ i_!$ is equivalent to the identity functor. 

On the other hand, for each $X \in \cat_{(\infty,n)}$, consider the other counit map $X \to i_* i^*X$. 
For each $k$, we have natural equivalences,
\begin{align*}
	\map(C_k, i_* i^* X) &\simeq \map( i(R_k), i_* i^* X) \\
	&  \simeq \map( i^* i(R_k), i^*X)  \\
	& \simeq \map(R_k, i^* X) \\
	& \simeq \map(i(R_k), X) \simeq \map(C_k, X),
\end{align*}
which follow from (R.3), (R.4), the identity $i_* (R_k) \cong i(R_k)$, and the fact that $i^*X$ is $T$-local. By Remark \ref{rmk:CellsDetect} this implies that the counit $X \to i_* i^*X$ is an equivalence. Thus $i^*$ is a functor with both a left and right inverse, hence is itself an equivalence of $\infty$-categories. 
\end{proof}

\begin{remark} Note that if the functor $i\colon \R \to \Upsilon_n$ is fully-faithful, then condition (R.3) is automatic. Note also that (R.3) and (R.4) together imply that the presheaves $i^*(C_k)$ on $\R$ are each $T$-equivalent to representables $R_k$.
\end{remark}

Condition (R.1) appears to be the most difficult to verify in practice. Heuristically, it states that $T$ contains \emph{enough morphisms}. To verify it, it will be convenient to subdivide it into two conditions.
\begin{lemma} \label{lma:NewR1}
	Condition (R.1) is implied by the conjunction of the following.
\begin{enumerate}[{\hspace{\parindent}(R.1-bis(}a{))}]
\item $i^*(S_{00})\subset T$.
\item For any morphism $U'\to V'$ of $T_0$, and for any morphisms $V'\to i^*(C_i)$ and $H\to C_i$ with $H \in \Upsilon_n$, the pullback
\begin{equation*}
U'\times_{i^*(C_i)}i^*H\to V'\times_{i^*C_i}i^*H
\end{equation*}
lies in $T$.
\end{enumerate}
\end{lemma}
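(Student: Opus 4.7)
The plan is to induct on the construction of the class $S_0$ described in Not.~\ref{Not:S_o-quasicat}: starting from $S_{00}$, one closes under isomorphism and under the pullback operations $X \times_{C_i}(-)$ indexed by $X \in \Upsilon_n$. Closure under isomorphism causes no trouble because $T$ is strongly saturated (hence closed under isomorphism), and the base case $i^*(S_{00}) \subseteq T$ is precisely hypothesis (R.1-bis(a)). So everything reduces to showing that if $f\in S_0$ satisfies $i^*f\in T$, then for every $X\in \Upsilon_n$ equipped with a map $X\to C_i$ and every factorization of the codomain of $f$ through $C_i$, the morphism $i^*(X\times_{C_i} f)$ again lies in $T$.

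For this, I would use that $i^*$ is merely restriction along $i$, so it preserves all limits (in particular pullbacks); consequently
\begin{equation*}
	i^*(X \times_{C_i} f) \;\cong\; i^*X \times_{i^*C_i} i^*f.
\end{equation*}
The inductive step therefore boils down to the following assertion: pullback along any morphism of the form $i^*X \to i^*C_i$ (with $X\in\Upsilon_n$) sends morphisms of $T$, equipped with a factorization of the codomain through $i^*C_i$, back into $T$.

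To establish this, I would invoke universality of colimits in the presheaf quasicategory $\pre(\R)$ (see \cite[\S 6.1.1]{HTT}): the pullback functor $\pre(\R)_{/i^*C_i}\to \pre(\R)_{/i^*X}$ preserves colimits. Consequently, the class of morphisms in $\pre(\R)_{/i^*C_i}$ whose pullback along $i^*X\to i^*C_i$ belongs to $T$ (equivalently, to $T_{/i^*X}$) is itself strongly saturated in the slice. Hypothesis (R.1-bis(b)) asserts exactly that this class contains every element of $T_0$ equipped with any compatible factorization through $i^*C_i$; therefore it contains all of $T$ with such a factorization. Applying this to $i^*f$ closes the induction and delivers $i^*(S_0)\subseteq T$.

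The only real care needed is the bookkeeping of slice quasicategories, since the elements of $T_0$ are bare morphisms of $\pre(\R)$ while (R.1-bis(b)) is quantified over factorizations of the codomain through $i^*C_i$. This is the point where one must be precise, but once the slices are set up correctly the argument is formal, resting only on the three ingredients: $i^*$ preserves limits, colimits are universal in $\pre(\R)$, and $T$ is strongly saturated.
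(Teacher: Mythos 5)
Your argument is correct and follows essentially the same route as the paper: both proofs isolate the subclass of $T$ whose members remain in $T$ after pullback along maps $H\to C_k$, use universality of colimits in $\pre(\R)$ to see that this subclass is strongly saturated, deduce from (R.1-bis(b)) that it is all of $T$, and then obtain $i^*(S_0)\subseteq T$ from the base case (R.1-bis(a)) together with the fact that $i^*$, being restriction along $i$, commutes with the fiber products used to generate $S_0$ from $S_{00}$. The only difference is cosmetic: you phrase the saturation step in slice quasicategories and make the limit-preservation of $i^*$ explicit, which the paper leaves implicit.
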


\begin{proof} First, consider the subclass $T'\subset T$ containing those morphisms $U'\to V'$ of $T$ such that for any nondegenerate morphisms $V'\to C_k$ and $H\to C_k$, the pullback
\begin{equation*}
U'\times_{C_k}H \to V'\times_{C_k}H
\end{equation*}
lies in $T$. Since colimits in $\pre(\R)$ are universal, one deduces immediately that the class $T'$ is strongly saturated. Hence (R.1-bis(b)) implies that $T'=T$. Thus $T$ is closed under pullbacks along morphisms $H\to C_k$ and contains $i^*S_{00}$ (by (R.1-bis(a))), hence contains all of $i^*S_0$. 
%
\end{proof}

There are two main examples to which we shall apply Th.~\ref{Thm:RecognitionForPresheaves}: Rezk's model of complete Segal $\Theta_n$-spaces [\S~\ref{sect:Rezksmodel}] and the model of $n$-fold complete Segal spaces [\S~\ref{sect:CSSnmodel}].


\section{Strict $n$-categories as presheaves of sets} A category internal to an ordinary category $\D$ may be described as a simplicial object in $\D$, that is a $\D$-valued presheaf $C\colon\Delta^\textrm{op} \to \D$, which satisfies the following {\em strong Segal conditions}. For any nonnegative integer $m$ and any integer $1\leq k\leq m-1$, the following square is a pullback square:
 \begin{center}
 \begin{tikzpicture}
 	\node (LT) at (0, 1) {$C([m])$};
 	\node (LB) at (0, 0) {$C(\{0,1,\dots,k\})$};
 	\node (RT) at (3, 1) {$C(\{k,k+1,\dots,m\})$};
 	\node (RB) at (3, 0) {$C(\{k\})$.};
 	\draw [->] (LT) -- node [left] {$$} (LB);
 	\draw [->] (LT) -- node [above] {$$} (RT);
 	\draw [->] (RT) -- node [right] {$$} (RB);
 	\draw [->] (LB) -- node [below] {$$} (RB);
 	\node at (0.5, 0.5) {$\ulcorner$};
 \end{tikzpicture}
 \end{center}
Thus a strict $n$-category consists of a presheaf of sets on the category $\Delta^{\!\times n}$ which satisfies the Segal condition in each factor and further satisfies a globularity condition. Equivalently a strict $n$-category is a presheaf of sets on $\Delta^{\!\times n}$ which is local with respect to the classes of maps $\mathrm{Segal}_{\Delta^{\!\times n}}$ and $\mathrm{Glob}_{\Delta^{\!\times n}}$ defined below. 
\begin{notation}
	Objects of $\Delta^{\!\times n}$ will be denoted $\mathbf{m} = ([m_k])_{k=1, \dots, n}$. Let
	\begin{equation*}
j: \Delta^{\!\times n} \to \Fun((\Delta^{\!\times n})^{\op}, \set)
\end{equation*}
	denote the Yoneda embedding. Let 
	\begin{equation*}
		\boxtimes\colon \Fun(\Delta^{\op}, \set) \times \Fun((\Delta^{\!\times n-1})^{\op}, \set)   \to \Fun((\Delta^{\!\times n})^{\op},\set)
	\end{equation*} 
	be the essentially unique functor that preserves colimits separately in each variable and sends $(j[k], j(\mathbf{m}))$ to $j([k], \mathbf{m})$. Let $\mathrm{Segal}_{\Delta}$ denote the collection of maps that corepresent the Segal squares:
\begin{equation*}
\mathrm{Segal}_{\Delta} = \{ j{\{0,1,\dots,k\}} \cup^{j{\{k\}}} j{\{k,k+1,\dots,m\}} \to j[m] \ |\ 1\leq k\leq m-1\}
\end{equation*}
and inductively define
\begin{equation*}
	\mathrm{Segal}_{\Delta^{\!\times n}} = \{ \mathrm{Segal}_{\Delta} \boxtimes j(\mathbf{m}) \ | \ \mathbf{m} \in \Delta^{\!\times n-1} \} \cup \{ j[k] \boxtimes \mathrm{Segal}_{\Delta^{\!\times n-1}} \ |\ [k] \in \Delta \}.
\end{equation*}
Moreover for each $\mathbf{m} \in \Delta^{\times n}$, let $\widehat{\mathbf{m}} = ([\widehat{m}_j])_{1\leq j \leq n}$ be defined by the formula
\begin{equation*}
   [\widehat{m}_j]  = \begin{cases} [0] & \textrm{if there exists } i \leq j \textrm{ with } [m_i] = [0], \textrm{ and} \\
[m_j] & \textrm{else} \end{cases},
\end{equation*}
and let
\begin{equation*}
\mathrm{Glob}_{\Delta^{\times n}} = \{ j(\mathbf{m}) \to j(\widehat{\mathbf{m}})  \; | \; \mathbf{m} \in \Delta^{\times n} \}.
\end{equation*}
The presheaf underlying a strict $n$-category $C$ will be called its {\em nerve} $\nu C$.  
\end{notation}

Strict $n$-categories may also be described as certain presheaves on the category $\Theta_n$, the opposite of Joyal's category of $n$-disks (Definition \ref{dfn:wreathproduct}).

As $i\colon\Theta_n\to\cat_n$ is a dense functor, the corresponding nerve functor $\nu: \cat_n \to \Fun(\Theta_n^\text{op}, \set)$ is fully-faithful. The essential image consists of precisely those presheaves which are local with respect to the class of maps $\mathrm{Segal}_{\Theta_n}$ defined inductively to be the union of $\sigma \mathrm{Segal}_{\Theta_{n-1}}$ and the following: 
\begin{align*}
	\left\{ 
	\begin{aligned}
		& j({\{0, \dots, k\}}; o_1, \dots, o_k) \cup^{j({\{ k \}})} j({\{ k, \dots, m\}}; o_{k+1}, \dots, o_{m}) \to j({[m]}; o_1, \dots, o_m) \\
		& 0 \leq k \leq m, \quad o_i \in \Theta_{n-1}
	\end{aligned}
		 \right\}.
\end{align*}
We will call this latter class $\mathrm{Se}_{\Theta_n}$ for later reference.

\begin{notation} \label{ntn:simplicialsetK}
	Let $K$ denote the simplicial set
\begin{equation*}
\Delta^3\cup^{(\Delta^{\{0,2\}}\sqcup\Delta^{\{1,3\}})}(\Delta^0\sqcup\Delta^0)
\end{equation*}
obtained by contracting two edges in the three simplex.

Rezk observed \cite[\S~10]{Rezk} that $K$ detects equivalences in nerves of categories, and consequently it may be used to formulate his completeness criterion. We shall use it to identify the gaunt $n$-categories. To this end 
 set
\begin{align*}
		\mathrm{Comp}_\Delta &= \{ K \to j[0] \} \\
		\mathrm{Comp}_{\Delta^{\!\times n}} & = \{ \mathrm{Comp}_{\Delta} \boxtimes j(\mathbf{0}) \} \cup \{j[k] \boxtimes \mathrm{Comp}_{\Delta^{\!\times n-1}} \} \\
		\mathrm{Comp}_{\Theta_n} &= \iota_{!}\mathrm{Comp}_\Delta \cup \sigma_! \mathrm{Comp}_{\Theta_{n-1}} 
		. 
\end{align*}
where
\begin{equation*}
\iota_!\colon \Fun(\Delta^\textrm{op}, \set) \to \Fun(\Theta_n^\textrm{op}, \set)\textrm{\quad and\quad}\sigma_!\colon\Fun(\Theta_{n-1}^\textrm{op}, \set) \to \Fun(\Theta_n^\textrm{op}, \set)
\end{equation*}
are given by left Kan extension along $\iota$ and $\sigma$, respectively. 
\end{notation}

\begin{corollary} \label{cor:GauntncatsAsPresheavesOnDeltaN}
	  A presheaf of sets on $\Delta^{\!\times n}$ is isomorphic to the nerve of a gaunt $n$-category if and only if it is local with respect to the classes $\mathrm{Segal}_{\Delta^{\!\times n}}$, $\mathrm{Glob}_{\Delta^{\!\times n}}$, and $\mathrm{Comp}_{\Delta^{\!\times n}}$. 	  A presheaf of sets on $\Theta_n$ is isomorphic to the nerve of a gaunt $n$-category if and only if it is local with respect to the classes $\mathrm{Segal}_{\Theta_n}$ and 	$\mathrm{Comp}_{\Theta_n}$.
%
\end{corollary}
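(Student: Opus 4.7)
The strategy is to reduce both statements, via induction on $n$, to Rezk's foundational observation in the case $n=1$.

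First, I would invoke the discussion immediately preceding the statement: a presheaf on $\Delta^{\times n}$ (respectively on $\Theta_n$) is the nerve of a strict $n$-category if and only if it is local with respect to $\mathrm{Segal}_{\Delta^{\times n}}$ and $\mathrm{Glob}_{\Delta^{\times n}}$ (respectively with respect to $\mathrm{Segal}_{\Theta_n}$). Since the nerve functor is fully faithful, it thus suffices to show that, for any strict $n$-category $C$, gauntness of $C$ is equivalent to locality of $\nu C$ with respect to $\mathrm{Comp}_{\Delta^{\times n}}$ (resp.\ $\mathrm{Comp}_{\Theta_n}$). By definition, $C$ is gaunt exactly when $\nu C$ is local with respect to the $n$ maps $\sigma^{k-1} E \to C_{k-1}$ for $k=1,\dots,n$; and Rezk's pushout decomposition $E \simeq K$ converts each of these, applied to a Segal-and-Glob-local presheaf, into the corresponding suspension $\sigma^{k-1} K \to C_{k-1}$. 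The task therefore becomes matching these $n$ suspension maps against the recursively defined classes $\mathrm{Comp}_{\Theta_n}$ and $\mathrm{Comp}_{\Delta^{\times n}}$.

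For the $\Theta_n$ case I would induct on $n$: the decomposition $\mathrm{Comp}_{\Theta_n} = \iota_! \mathrm{Comp}_\Delta \cup \sigma_! \mathrm{Comp}_{\Theta_{n-1}}$ splits the gauntness conditions into the $k=1$ condition, imposed by $\iota_!\mathrm{Comp}_\Delta$ via the embedding $\iota\colon\Delta\hookrightarrow\Theta_n$, and the remaining conditions for $k\geq 2$, obtained by suspending the corresponding conditions on $\Theta_{n-1}$ and invoking the inductive hypothesis together with the compatibility of $\sigma$ with the nerve functor. For the $\Delta^{\times n}$ case I would argue analogously using the product decomposition $\Delta^{\times n} = \Delta \times \Delta^{\times (n-1)}$: the piece $\mathrm{Comp}_\Delta \boxtimes j(\mathbf 0)$ handles gauntness on the underlying $1$-category at the object level, while $j[k] \boxtimes \mathrm{Comp}_{\Delta^{\times (n-1)}}$ handles inner gauntness level-wise; the Segal condition in the first factor allows one to reduce all $k$ to $k=1$, and the globularity condition then identifies the resulting suspensions in the presheaf category with the maps $\sigma^{k-1} E \to C_{k-1}$ living in $\gaunt_n$.

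The principal obstacle I anticipate is the bookkeeping required to verify that the recursive classes $\mathrm{Comp}$ really generate, on a Segal-and-Glob-local presheaf, the same localization as $\{\sigma^{k-1}E \to C_{k-1}\}_{k=1}^n$: one must check that the apparently larger collection $\{j[k] \boxtimes \mathrm{Comp}_{\Delta^{\times (n-1)}}\}_k$ (and analogously its $\Theta_n$ counterpart) is absorbed by the $k=1$ case on Segal objects, using that Segal locality recovers the value on $[k]$ from iterated fiber products of values on $[1]$, and that this reduction is compatible with the globularity constraints.
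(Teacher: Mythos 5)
Your proposal is correct and follows essentially the same route as the paper: reduce to the statement that a Segal-(and Glob-)local presheaf is the nerve of a strict $n$-category, and then show gauntness is equivalent to $\mathrm{Comp}$-locality via Rezk's presentation of $E$ as the pushout $\Delta^3\cup^{(\Delta^{\{0,2\}}\sqcup\Delta^{\{1,3\}})}(\Delta^0\sqcup\Delta^0)$. The paper's proof is considerably terser --- it simply exhibits the pushout square presenting $E$ and leaves the recursive matching of the $\mathrm{Comp}$ classes against the suspended maps $\sigma^{k-1}E\to C_{k-1}$ implicit --- whereas you spell out the induction; both arguments are the same in substance.
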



\begin{proof}
	Being local with respect $\mathrm{Segal}_{\Delta^{\!\times n}}$ and $\mathrm{Glob}_{\Delta^{\!\times n}}$ (or to $\mathrm{Segal}_{\Theta_n}$ for $\Theta_n$-presheaves) implies that the presheaf is the nerve of a strict $n$-category. Such an $n$-category is gaunt if and only if it is local with respect to the morphisms $\sigma^k(E) \to \sigma^k(C_0)$. This last follows from locality with respect to $\mathrm{Comp}_{\Delta^{\!\times n}}$ (or, respectively,	with respect to $\mathrm{Comp}_{\Theta_n}$) because the square
	\begin{center}
	\begin{tikzpicture}
		\node (LT) at (0, 1.5) {$[1] \sqcup [1]$};
		\node (LB) at (0, 0) {$[0] \sqcup [0]$};
		\node (RT) at (4, 1.5) {$[3]$};
		\node (RB) at (4, 0) {$E$};
		\draw [->] (LT) -- node [left] {} (LB);
		\draw [right hook->] (LT) -- node [above] {$i_{0,2} \sqcup i_{1,3}$} (RT);
		\draw [->] (RT) -- node [right] {} (RB);
		\draw [->] (LB) -- node [below] {} (RB);
		\node at (3.75, 0.25) {$\lrcorner$};
	\end{tikzpicture}
	\end{center}
is a pushout square of strict $n$-categories.
\end{proof}

The description of $\Theta_n$ as an iterated wreath product gives rise to a canonical functor
$\delta_n\colon\Delta^{\!\times n}\to\Theta_n$, described in \cite[Definition 3.8]{MR2331244}, which sends 
 $[k_1] \times [k_2] \times \cdots \times [k_n]$ to the object
\begin{equation*}
	([k_n]; \underbrace{i_{\Delta^{\times (n-1)}}([k_1] \times \cdots \times [k_{n-1}]), \dots, i_{\Delta^{\times (n-1)}}([k_1] \times \cdots \times [k_{n-1}])}_{k_n \textrm{ times}}). 	
\end{equation*}
This object may be thought of as generated by a $k_1 \times k_2 \times \cdots \times k_n$ grid of cells. If $X$ is a strict $n$-category then its nerve $\nu_{\Delta^{\times n}} X$ in $\Fun((\Delta^{\times n})^\op, \set)$ is obtained by the formula $\delta_n^* \nu_{\Theta_n} X$, where $\nu_{\Theta_n}$ is the nerve induced from the inclusion $\Theta_n \to \cat_n$ \cite[Proposition 3.9, Rk.~3.12]{MR2331244}.  

\begin{proposition} \label{prop:ThetaNSegalAreRetractsOfDeltaNSegal}
	Joyal's category $\Theta_n$ is the smallest full subcategory of $\gaunt_n$ containing the grids (the full subcategory of $\Theta_n$ spanned by the image of $\delta_n$) and closed under retracts. Furthermore, the  morphisms in the set $\mathrm{Segal}_{\Theta_n}$ may be obtained as retracts of the set $(\delta_n)_!(\mathrm{Segal}_{\Delta^{\times n}})$.
\end{proposition}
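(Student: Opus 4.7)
The plan is to establish both statements by a simultaneous induction on $n$. The base case $n=1$ is trivial since $\Theta_1 = \Delta^{\times 1}$ and $\delta_1$ is the identity functor: the grid subcategory coincides with $\Theta_1$, and $(\delta_1)_! \mathrm{Segal}_{\Delta^{\times 1}} = \mathrm{Segal}_{\Theta_1}$.

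For the first statement at the inductive step, given $\theta = ([m]; o_1, \ldots, o_m) \in \Theta_n$, I would apply the induction hypothesis to write each $o_j$ as a retract of a grid $g_j = \delta_{n-1}([k_1^{(j)}], \ldots, [k_{n-1}^{(j)}])$ in $\Theta_{n-1}$. A key subsidiary observation is that whenever $k_i \leq K_i$ for all $i$, the grid $\delta_{n-1}([k_1], \ldots, [k_{n-1}])$ is itself a retract of $\delta_{n-1}([K_1], \ldots, [K_{n-1}])$: indeed, functoriality of $\delta_{n-1}$ applied to the coordinate-wise inclusion-degeneracy pairs $[k_i] \hookrightarrow [K_i] \twoheadrightarrow [k_i]$ in $\Delta$ assembles into the desired section-retraction pair. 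Setting $K_i = \max_j k_i^{(j)}$, we obtain a common dominating grid $g = \delta_{n-1}([K_1], \ldots, [K_{n-1}])$ with sections $s_j \colon o_j \to g$ and retractions $r_j \colon g \to o_j$. The wreath-product morphisms $(\id_{[m]}; s_1, \ldots, s_m)$ and $(\id_{[m]}; r_1, \ldots, r_m)$ then exhibit $\theta$ as a retract of the $\Theta_n$-grid $([m]; g, \ldots, g) = \delta_n([K_1], \ldots, [K_{n-1}], [m])$.

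For the second statement, I would split $\mathrm{Segal}_{\Theta_n}$ according to its inductive definition. An outer Segal morphism
\begin{equation*}
s_{k, \theta}\colon j(\{0,\ldots,k\}; o_1,\ldots,o_k) \cup^{j(\{k\})} j(\{k,\ldots,m\}; o_{k+1},\ldots,o_m) \to j([m]; o_1,\ldots,o_m)
\end{equation*}
is shown, using a dominating grid $g$ as above, to be a retract of the analogous outer Segal morphism for $([m]; g, \ldots, g)$; this latter morphism coincides with $(\delta_n)_!$ applied to $\mathrm{Segal}_\Delta \boxtimes j([K_1]) \boxtimes \cdots \boxtimes j([K_{n-1}])$ in $\mathrm{Segal}_{\Delta^{\times n}}$, and the section-retraction pairs $(s_j, r_j)$ assemble, using functoriality of the inclusions $j(\{0,\ldots,k\}; -) \hookrightarrow j([m]; -)$, into the necessary morphism of arrows in the arrow category of $\pre(\Theta_n)$. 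For an inner Segal morphism in $\sigma_!\mathrm{Segal}_{\Theta_{n-1}}$, I would exploit the identity $\sigma \circ \delta_{n-1} = \delta_n \circ \iota_1$, where $\iota_1\colon \Delta^{\times (n-1)} \to \Delta^{\times n}$ is the inclusion $(-, \ldots, -) \mapsto (-, \ldots, -, [1])$. Applying $\sigma_!$ to the induction hypothesis exhibits each such morphism as a retract of $(\delta_n)_! \circ (\iota_1)_!$ of an element of $\mathrm{Segal}_{\Delta^{\times (n-1)}}$, after which a short secondary induction confirms that $(\iota_1)_! \mathrm{Segal}_{\Delta^{\times (n-1)}} \subseteq \mathrm{Segal}_{\Delta^{\times n}}$.

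The hardest part will be the coherent bookkeeping of the section-retraction pairs $(s_j, r_j)$ through the pushouts appearing in the sources of the outer Segal morphisms, and checking the secondary induction that identifies the $(\iota_1)_!$-images with elements of the slightly asymmetric defining presentation of $\mathrm{Segal}_{\Delta^{\times n}}$.
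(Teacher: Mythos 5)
Your proof is correct and follows essentially the same route as the paper's: induction on $n$, a common dominating grid obtained from coordinate-wise maxima of the indices, and the splitting of $\mathrm{Segal}_{\Theta_n}$ into the outer Segal maps (exhibited as retracts of grid Segal maps) and the suspended ones (handled via $\sigma\circ\delta_{n-1}=\delta_n\circ\iota_1$ and the induction hypothesis). The only point you leave implicit is the reverse inclusion needed for the first claim --- that $\Theta_n$ is itself closed under retracts inside $\gaunt_n$, which the paper cites (Berger) to conclude that the retract-closure of the grids does not exceed $\Theta_n$.
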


\begin{proof}
	Both statements follow by induction. First note that $\Theta_n$ itself is closed under retracts \cite[Proposition 3.14]{MR2331244}. In the base case, the category $\Theta_1 = \Delta$ consists of precisely the grids, and the sets of morphisms agree $\mathrm{Segal}_{\Theta_1} = \mathrm{Segal}_{\Delta}$. Now assume, by induction, that every object of $o \in \Theta_{n-1}$ is the retract of a grid $\delta_n( \mathbf{m}^o)$, for some object $\mathbf{m}^o = [m_1^o] \times \cdots \times [m_{n-1}^o] \in \Delta^{\times n}$. In fact, given any finite collection of objects  $\{ o_i \in \Theta_{n-1} \}$ they may be obtained as the retract of a single grid. This grid may be obtained as the image of $\mathbf{k} = [k_1] \times \cdots \times [k_{n-1}]$, where $k_j$ is the maximum of the collection $\{m_j^{o_i}\}$.
It now follows easily that the object $([n]; o_1, \dots, o_i) \in \Theta_n$ is a retract of the grid coming from the object $[n] \times \mathbf{k}$. 

To prove the second statement we note that there are two types of maps in $\mathrm{Segal}_{\Theta_n}$, those in $\sigma_!(\mathrm{Segal}_{\Theta_{n-1}})$ and the maps
\begin{equation*}
	j({\{0, \dots, k\}}; o_1, \dots, o_k) \cup^{j({\{ k \}})} j({\{ k, \dots, m\}}; o_{k+1}, \dots, o_{m}) \to j({[m]}; o_1, \dots, o_m)
\end{equation*}
for $ 0 \leq k \leq m$ and $o_i \in \Theta_{n-1}$. This later map is a retract of the image under $(\delta_n)_!$ of the map 
\begin{equation*}
	\left(j{\{0,1,\dots,k\}} \cup^{j{\{k\}}} j{\{k,\dots,m\}} \to j[m]\right) \boxtimes j(\mathbf{m})
\end{equation*}
which is a map in $\mathrm{Segal}_{\Delta^{\times n}}$. Here $\mathbf{m}$ is such that $({\{0, \dots, k\}}; o_1, \dots, o_k)$ is the retract of the grid corresponding to $[k] \times \mathbf{m}$.

The former class of morphisms in $\mathrm{Segal}_{\Theta_n}$, those in $\sigma_!(\mathrm{Segal}_{\Theta_{n-1}})$, are also retracts on elements in $\mathrm{Segal}_{\Delta^{\times n}}$. Specifically, if $\sigma_!(f) \in \sigma_!(\mathrm{Segal}_{\Theta_{n-1}})$, then by induction $f$ is the retract of $(\delta_{n-1})_!(g)$ for some $g \in \mathrm{Segal}_{\Delta^{\times n-1}}$. One may then readily check that $\sigma(f)$ is  the retract of $j[1] \boxtimes g \in \mathrm{Segal}_{\Delta^{\times n}}$. 
\end{proof}





\section{Rezk's complete Segal $\Theta_n$-spaces form a theory of $(\infty,n)$-categories}\label{sect:Rezksmodel}

Here we consider Joyal's full subcategory $\Theta_n$ of $\cat_n$ \cite{joyaldisks,MR1916373,MR2331244}; write $i\colon\Theta_n\to \Upsilon_n$ for the inclusion functor. Rezk \cite[11.4]{Rezk} identifies the set of morphisms $\mathcal{T}_{n,\infty}$ of $\pre(\Theta_n)$ consisting of the union of  $\mathrm{Segal}_{\Theta_n}$ and $\mathrm{Comp}_{\Theta_n}$\footnote{Rezk use a slightly different generating set based on the full decomposition of $[n]$ as the union
\begin{equation*}
	[1] \cup^{[0]} [1] \cup^{[0]} \cdots \cup^{[0]} [1].
\end{equation*}
Both Rezk's set of generators and the union  $\mathrm{Segal}_{\Theta_n} \cup \mathrm{Comp}_{\Theta_n}$ are readily seen to produce the same saturated class $T_{\Theta_n}$. We find it slightly more convenient to use the later class of generators.
}. Let us write $T_{\Theta_n}$ for the saturated class generated by $\mathcal{T}_{n,\infty}$, and let us write $\CSS(\Theta_n)$ for the localization $T_{\Theta_n}^{-1}\pre(\Theta_n)$. We now show that $\CSS(\Theta_n)$ is a theory of $(\infty,n)$-categories.

\begin{remark} \label{rk:CSSThetaInfty=Modelcat} It follows from \cite[A.3.7.3]{HTT} that $\CSS(\Theta_n)$ is canonically equivalent to the simplicial nerve of the cofibrant-fibrant objects in the simplicial model category $\Theta_n\mathrm{Sp}_{\infty}$ considered by Rezk --- i.e., the left Bousfield localization of the injective model category of simplicial presheaves on $\Theta_n$ with respect to the set $\mathcal{T}_{n,\infty}$.
\end{remark}

\begin{lemma} \label{lma:TThetanContainsS00}
	The saturated class $T_{\Theta_n}$ contains the set $i^*(S_{00})$.
\end{lemma}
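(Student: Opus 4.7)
The plan is to verify, family by family (a)--(d), that each of the finitely many maps constituting $i^*(S_{00})$ lies in $T_{\Theta_n}$. Since $i\colon\Theta_n\hookrightarrow\Upsilon_n$ is fully faithful, the restriction functor $i^*$ preserves all limits and colimits, and for any $X\in\Upsilon_n$ lying in the essential image of $\Theta_n$ the presheaf $i^*\nu X$ is simply the $\Theta_n$-representable on $X$. This reduces each entry of $i^*(S_{00})$ to an explicit morphism of $\Theta_n$-presheaves, which I aim to recognize either as an isomorphism or as a (possibly $\sigma_!$- or $\iota_!$-suspended) Segal or completeness generator.

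Family (d) is immediate: since $C_k=\sigma^k(\Delta^0)$ and each of $\Delta^3$, $\Delta^{\{0,2\}}\sqcup\Delta^{\{1,3\}}$, and $\Delta^0\sqcup\Delta^0$ lies in $\Delta\subset\Theta_n$, applying $i^*\nu$ reproduces the presheaf map $\sigma^k(K)\to\sigma^k(\Delta^0)$, where $K$ is the simplicial set of Notation~\ref{ntn:simplicialsetK}. This is $\sigma_!^k$ of the generating completeness map $K\to\Delta^0\in\mathrm{Comp}_\Delta$, and so lies in $\mathrm{Comp}_{\Theta_n}\subseteq T_{\Theta_n}$ by the recursive definition $\mathrm{Comp}_{\Theta_n}=\iota_!\mathrm{Comp}_\Delta\cup\sigma_!\mathrm{Comp}_{\Theta_{n-1}}$.

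Family (b) reduces via the identity $C_j\cup^{C_i}C_j=\sigma^i(C_{j-i}\cup^{C_0}C_{j-i})$ to the case $i=0$, where $C_j\cup^{C_0}C_j\cong([2];C_{j-1},C_{j-1})$ lies in $\Theta_n$ and the map in (b) becomes the wreath-Segal spine inclusion $([1];C_{j-1})\cup^{([0])}([1];C_{j-1})\to([2];C_{j-1},C_{j-1})$, a generator of $\mathrm{Segal}_{\Theta_n}$. The general $i$ then follows from the containment $\sigma_!\mathrm{Segal}_{\Theta_{n-1}}\subseteq\mathrm{Segal}_{\Theta_n}$ in the recursive definition. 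Family (a) is handled similarly but more elementarily: the map $\partial C_i\hookrightarrow C_i$ is a monomorphism in $\gaunt_n$, so one checks either by direct combinatorics (as in Rezk's description of boundary cells) or via the injective model structure on $\pre(\Theta_n)$ underlying $\Theta_n\mathrm{Sp}_\infty$ of Remark~\ref{rk:CSSThetaInfty=Modelcat} that the nerve $\nu$ preserves the pushout $\partial C_{i+1}=C_i\cup^{\partial C_i}C_i$ up to $T_{\Theta_n}$-equivalence; the remaining case $\emptyset\to\nu\emptyset$ is trivial.

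Family (c) is the most involved step and is expected to be the main obstacle. The plan is to first reduce to the case $i=0$ via $\sigma_!^i$, which preserves Segal and completeness maps by their recursive definitions, and then to exploit the explicit decomposition of $C_m\times C_\ell$ as an iterated pushout of cells from the bulleted list preceding Lemma~\ref{lma:fiberproductsareColimits}. After unwinding, both the source and target of the map in (c) admit presentations as colimits of $\Theta_n$-representables, and the comparison between them can be built from wreath-Segal maps together with their suspensions, in combination with the already-established cases (a), (b), and (d). The delicate point is to sequence this analysis in parallel with the induction in Lemma~\ref{lma:fiberproductsareColimits}, so that every auxiliary pushout of representables that appears during the decomposition is recognized as $T_{\Theta_n}$-equivalent to the corresponding nerve before it is invoked in the next inductive step.
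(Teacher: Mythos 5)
Your treatment of families (b) and (d) is correct and agrees with the paper: both pull back under $i^*$ to (suspensions of) the generating Segal and completeness maps, hence lie in $T_{\Theta_n}$ by the recursive definitions of $\mathrm{Segal}_{\Theta_n}$ and $\mathrm{Comp}_{\Theta_n}$. The problem is families (a) and (c), and in particular (c), where your argument is a plan rather than a proof: you acknowledge a ``delicate point'' about sequencing the decomposition of $C_{i+j}\times_{C_i}C_{i+k}$ in parallel with the induction of Lemma~\ref{lma:fiberproductsareColimits}, and you do not resolve it. That sequencing issue is real --- for instance, the gluing object $\sigma^{i+1}(C_{j-1}\times C_{k-1})$ is not representable in $\pre(\Theta_n)$, so ``both sides admit presentations as colimits of $\Theta_n$-representables'' does not by itself produce a comparison map built from Segal generators --- and closing it along the route you propose would require substantial additional work.

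The paper avoids all of this by observing that families (a) and (c) need no Segal maps whatsoever: after applying $i^*$, these morphisms are \emph{isomorphisms} of presheaves of sets on $\Theta_n$. The point (cf.\ \cite[Pr.~4.9]{Rezk}) is that for every object $o\in\Theta_n$ one has a natural bijection
\begin{equation*}
\hom\bigl(o,\;C_{i+j}\times_{C_i}C_{i+k}\bigr)\;\cong\;\hom\bigl(o,\;C_{i+j}\cup^{C_i}C_{i+k}\bigr)\cup^{\hom(o,\;\sigma^{i+1}(C_{j-1}\times C_{k-1}))}\hom\bigl(o,\;C_{i+k}\cup^{C_i}C_{i+j}\bigr),
\end{equation*}
i.e.\ the pushout defining the source of the type~(c) map is computed objectwise on $\Theta_n$ and already yields the target. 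Hence these maps are equivalences in $\pre(\Theta_n)$ and trivially belong to the saturated class $T_{\Theta_n}$; the same objectwise argument disposes of family (a), where your appeal to monomorphisms and the injective model structure is both vaguer and weaker than what is actually true. You should replace your treatment of (a) and (c) with this objectwise computation; as written, family (c) remains unproven.
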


\begin{proof}
	The set $S_{00}$ consists of the union of four subsets of maps, corresponding to the four families of fundamental pushouts of types (a), (b), (c), and (d) in Axiom (C.3). The second and last subsets corresponding to the (b) and (d) families pullback to morphisms which are contained in the generating set of $T_{\Theta_n}$. Thus it remains to prove the that the same holds for the remaining families (a) and (c). In particular, we wish to show that for each $0 \leq i \leq n$, each $0 \leq j, k \leq n-i$, and every nondegenerate morphism $C_{i+j}\to C_i$ and $C_{i+k}\to C_i$, the natural morphism
\begin{align} \label{eqn:TypeCPushoutForThetaN}
f(C_{i + j} \cup^{C_i} C_{i+k}))  \cup^{f(\sigma^{i+1}(C_{j-1} \times C_{k-1}))} (f(C_{i + k} \cup^{C_i} & C_{i +j}) \\
 & \to f(C_{i+j} \times_{C_i} C_{i + k}), \notag
\end{align}
is contained in $T_{\Theta_n}$ where the pushout is formed as in Notation \ref{Note:FundPushouts}.
	
In fact a stronger statement holds (cf. \cite[Proposition 4.9]{Rezk}). For each object $o \in \Theta_n$ we have a natural bijection of sets
\begin{equation*}
	\hom(o, C_{i+j} \times_{C_i} C_{i + k}) \cong \hom(o, C_{i + j} \cup^{C_i} C_{i+k})) \cup^{\hom(o, C_{i + m})} \hom(o, C_{i + k} \cup^{C_i} C_{i +j}).
\end{equation*}	
Thus Eq.~\ref{eqn:TypeCPushoutForThetaN} is in fact an equivalence in the presheaf category $\pre(\Theta_n)$. In particular the family (c) pulls back to a family of equivalences, which are hence contained in $T_{\Theta_n}$. An virtually identical argument applies the family (a), which also consists of morphisms pulling back to equivalences of presheaves.  	
\end{proof}

The functor $\sigma: \Theta_{n-1} \to \Theta_n$ gives rise to a functor $\sigma_{!}: \pre(\Theta_{n-1}) \to \pre(\Theta_n)$, left adjoint to $\sigma^*$. The classes $\mathrm{Segal}_{\Theta_n}$ and $\mathrm{Comp}_{\Theta_n}$ are defined inductively using the 1-categorical analog of $\sigma_!$, but may also be defined using $\sigma_!$. We therefore collect some relevant properties of this functor in the next two lemmas.

\begin{lemma} \label{lma:propertyOfSigma}
	The functor $\sigma_!$ preserves both pushouts and pullbacks, sends $T_{\Theta_{n-1}}$-local objects to $T_{\Theta_n}$-local objects, and satisfies $\sigma_!(T_{\Theta_{n-1}}) \subseteq T_{\Theta_{n}}$.
\end{lemma}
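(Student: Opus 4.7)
The plan is to treat the three assertions separately, with the substantive work concentrated in preservation of pullbacks.

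Preservation of pushouts is automatic: $\sigma_!$ is the left adjoint of $\sigma^{*}$, so it preserves all small colimits.

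The inclusion $\sigma_!(T_{\Theta_{n-1}})\subseteq T_{\Theta_n}$ is also formal. Since $\sigma_!$ preserves colimits and $T_{\Theta_n}$ is a strongly saturated class, the preimage $\sigma_!^{-1}(T_{\Theta_n})$ is itself strongly saturated in $\pre(\Theta_{n-1})$; consequently it suffices to verify the inclusion on the generating set $\mathrm{Segal}_{\Theta_{n-1}}\cup\mathrm{Comp}_{\Theta_{n-1}}$ of $T_{\Theta_{n-1}}$. But the inductive definitions of $\mathrm{Segal}_{\Theta_n}$ and $\mathrm{Comp}_{\Theta_n}$ recorded in Notation~\ref{ntn:simplicialsetK} explicitly include $\sigma_!$ of the corresponding $\Theta_{n-1}$-classes among their generators, and the claim follows.

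For the remaining two assertions I would extract an explicit pointwise formula for $\sigma_!$. Fixing an object $o'=([m];a_1,\dots,a_m)\in\Theta_n$, the wreath-product description of hom-sets in $\Theta_n$ shows that a morphism $\alpha\colon o'\to\sigma(o)=([1];o)$ is specified by a monotone map $\psi\colon[m]\to[1]$ in $\Delta$ together with, when $\psi$ has its (unique) jump at some $k\in\{1,\dots,m\}$, a single component morphism $a_k\to o$ in $\Theta_{n-1}$. Decomposing the comma category $(o'\downarrow\sigma)$ accordingly yields two degenerate components (for the two constant $\psi$) each equivalent to $\Theta_{n-1}$, together with one nondegenerate component for each $k\in\{1,\dots,m\}$ equivalent to the undercategory $a_k\backslash\Theta_{n-1}$, which carries the initial object $(a_k,\mathrm{id})$. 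Computing the colimit of $F$ pulled back along the forgetful functor, the nondegenerate components contribute exactly $F(a_k)$, while each degenerate component contributes $\colim_{\Theta_{n-1}^{\mathrm{op}}}F$; in particular
\[
\sigma_!(F)([m];a_1,\dots,a_m)\;\cong\;\bigl(\colim_{\Theta_{n-1}^{\mathrm{op}}}F\bigr)^{\sqcup 2}\sqcup\bigsqcup_{k=1}^{m}F(a_k),
\]
naturally in $o'$. Preservation of pullbacks then follows pointwise on $\Theta_n$: pullbacks distribute over disjoint unions in $\set$, and the nondegenerate summands $F\mapsto F(a_k)$ are evaluation functors and thus preserve all limits. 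The main technical obstacle is handling the degenerate summands and verifying the compatibility of the resulting formula with the functorial structure in $o'$.

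For the claim that $\sigma_!$ carries $T_{\Theta_{n-1}}$-local objects to $T_{\Theta_n}$-local objects, let $X$ be $T_{\Theta_{n-1}}$-local and test $\sigma_!(X)$ against the three families of generators of $T_{\Theta_n}$: the level-zero Segal maps, the suspended class $\sigma_!(\mathrm{Segal}_{\Theta_{n-1}}\cup\mathrm{Comp}_{\Theta_{n-1}})$, and the class $\iota_!(\mathrm{Comp}_\Delta)$. For the suspended family, the adjunction $\sigma_!\dashv\sigma^{*}$ gives $\map(\sigma_!(h),\sigma_!(X))\simeq\map(h,\sigma^{*}\sigma_!(X))$, and the pointwise formula above lets one identify the relevant factor of $\sigma^{*}\sigma_!(X)$ with $X$, reducing the required locality to that of $X$. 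For the level-zero Segal and $\iota_!(\mathrm{Comp}_\Delta)$ families, the pointwise formula displays the dependence of $\sigma_!(X)$ on the outer $\Delta$-factor as essentially free (a disjoint union whose index set is controlled by the combinatorics of $(o'\downarrow\sigma)$), and these outer conditions are verified directly by inspection of the formula.
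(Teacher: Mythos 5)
Your treatment of pushouts and of the containment $\sigma_!(T_{\Theta_{n-1}})\subseteq T_{\Theta_n}$ is correct and matches the paper's. The gap is in the pullback (and locality) part, and the ``main technical obstacle'' you defer is exactly where the proof lives. Your comma-category computation is right: for the honest left Kan extension $\sigma_!$ (the left adjoint of $\sigma^*$), the two degenerate components of $(o'\downarrow\sigma)$ each contribute $\colim_{\Theta_{n-1}^{\op}}F$, not a point. But $F\mapsto\colim_{\Theta_{n-1}^{\op}}F$ does not preserve pullbacks, so the componentwise argument cannot be completed --- and in fact the pullback claim is \emph{false} for the functor your formula describes. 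Concretely, for $n\geq 2$ take the two distinct vertices $j(C_0)\rightrightarrows j(C_1)$ in $\pre(\Theta_{n-1})$: their fiber product is the empty presheaf, and $\sigma_!(\emptyset)=\emptyset$ because left adjoints preserve initial objects, whereas $\sigma_!j(C_0)\times_{\sigma_!j(C_1)}\sigma_!j(C_0)=j(C_1)\times_{j(C_2)}j(C_1)$ is nonempty (its value at $([0])$ is the two-point set $\{\top,\bot\}$). So no amount of extra care with the degenerate summands will close this.

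The paper's own proof instead asserts the formula in which the two degenerate summands are singletons; that is the formula for Rezk's intertwining functor $V[1]$ (the categorical suspension, for which $\sigma(\emptyset)=\partial C_1=C_0\sqcup C_0$), which agrees with the left Kan extension on representables but not in general. For \emph{that} functor the componentwise argument does go through: pullbacks commute with the coproduct decomposition indexed by maps $[m]\to[1]$, and each summand is either a constant singleton or an evaluation $F\mapsto F(a_k)$. To repair your argument you would need to work with the intertwining functor throughout --- and then re-examine the colimit statements, since $V[1]$ preserves connected colimits (enough for pushouts and for the saturation argument giving the containment) but not coproducts or the initial object. The same discrepancy in the degenerate slots ($\colim F$ versus a point) also undermines your ``by inspection'' verification of the locality claim. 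As written, your formula is the correct one for the functor you named, and it refutes the step you need; the obstacle is essential, not technical.
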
  

\begin{proof}
The functor $\sigma_!$ is a left adjoint, hence it preserves all colimits in $\pre(\Theta_n)$, in particular pushouts. Moreover, $\sigma_!$ sends the generators of $T_{\Theta_{n-1}}$ to generators of $T_{\Theta_{n}}$. Together these imply the containment $\sigma_!(T_{\Theta_{n-1}}) \subseteq T_{\Theta_{n}}$. Direct computations, which we leave to the reader, show that $\sigma_!$ sends $T_{\Theta_{n-1}}$-local objects to $T_{\Theta_n}$-local objects and that the following formula holds,
\begin{equation*}
	\hom( ([n]; o_1, \dots, o_n), \sigma(X \times_Y Z)) = \coprod_{i_k: [n] \to [1] \atop 0 \leq k \leq n+1} \map(o_k, X \times_Y Z).
\end{equation*}
where $i_k: [n] \to [1]$ maps $i \in [n]$ to $0 \in [1]$ if $i < k$ and to $1 \in[1]$ otherwise. 
In the above formula when $k=0$ or $n+1$ the space $\map(o_k, X \times_Y Z)$ is interpreted as a singleton space. From this it follows that $\sigma$ preserves fiber products. 
\end{proof}

\begin{remark} \label{lma:NonDegenFromSuspension=Suspension}
If $V \in \Theta_n$ is of the form $V = \sigma(W) = ([1],W)$ for some $W \in \Theta_{n-1}$, then any nondegenerate morphism $f \colon V \to C_i = ([1],C_{i-1})$ is of the form $f = \sigma(g)$ for a unique nondegenerate $g \colon W \to C_{i-1}$. 

More generally, the $\Theta$-construction gives rise, for each $[m] \in \Delta$, to functors
\begin{align*}
	\sigma^{[m]} \colon   \Theta_{n-1}^{\times m} & \to \Theta_n \\
	 (o_1, \dots, o_m) &\mapsto ({[m]}; o_1, \dots, o_m).
\end{align*}
In the case of $\sigma^{[1]} = \sigma$, we obtain functors
\begin{equation*}
	\sigma^{[m]}_! \colon   \pre(\Theta_{n-1})^{\times m} \to \pre(\Theta_n)
\end{equation*}
by left Kan extension in each variable. These functors were also considered by Rezk \cite[\S~4.4]{Rezk}, and we adopt a similar notation: $\sigma^{[m]}_!(X_1, \dots, X_m) = ({[m]}; X_1, \dots, X_m)$.
\end{remark}

\bigskip

The following lemma is a result of \cite[Proposition 6.4]{Rezk},
but the proof given there (even in the corrected version) relies on the false proposition \cite[Proposition 2.19]{Rezk}.
However it is straightforward to supply an alternate proof (along the lines of \cite[Proposition 5.3]{Rezk}).

\begin{lemma} \label{lma:MoremapsinTTheta}
	Let $b_1, \dots, b_p$ be elements of $\pre(\Theta_{n-1})$, and let $0 \leq r \leq s \leq p$.
	Let $A$ and $B$ be defined as follows:
	\begin{align*}
		A &= \sigma_!^{\{0, \dots, s\}}(b_1, \dots, b_s) \cup^{\sigma^{\{r, \dots, s\}}_!(b_{r+1}, \dots, b_s)} \sigma_!^{\{r, \dots, p\}}(b_{r+1}, \dots, b_p), \text{ and} \\
		B &= \sigma_!^{[p]}(b_1, \dots, b_p).
	\end{align*}
	 Then the natural map $A \to B$ is in $T_{\Theta_n}$.
\end{lemma}
\begin{proof}
First note that if each of the $b_i$ were a representable presheaf, then the map $A \to B$ may be written as a pushout of the generating morphism $\mathcal{T}_{n,\infty}$, hence is manifestly an element of $T_{\Theta_n}$ (we leave this as an exercise). The general case, however, reduces to this case as every presheaf is (canonically) a colimit of representables, the functors $\sigma^{[\ell]}_!$ commute with these colimits separately in each variable, and $T_{\Theta_n}$, being a saturated class, is closed under colimits. 
\end{proof}

\begin{notation}
	We now define three additional classes of morphisms of $\pre(\Theta_n)$. Let $J_a$ be the set of all morphisms $H\to C_i$ ($0\leq i\leq n$) of $\Upsilon_n$; let $J_b$ be the set of all nondegenerate morphisms $H\to C_i$ ($0\leq i\leq n$) of $\Theta_n$; and let $J_c$ be set of all inclusions $C_j\hookrightarrow C_i$ ($0\leq j\leq i\leq n$) of $\mathbb{G}_n$. Now, for $x\in\{a,b,c\}$, set:
\begin{equation*}
	T_{\Theta_n}^{(x)}\coloneq \left\{ [f\colon U\to V] \in T_{\Theta_n} \; \middle| \; 
	\begin{aligned}
		&\textrm{for any }[H\to C_i]\in J_x\textrm{ and }[V\to C_i]\in\pre(\Theta_n), \\
		&\textrm{one has }f\times_{C_i}\nu H \in T_{\Theta_n}
	\end{aligned}
 \right\}
\end{equation*}
\end{notation}

\begin{lemma} \label{lma:TThetaX_strongsat}
	Each of the three classes $T_{\Theta_n}^{(x)}$ ($x\in\{a,b,c\}$) is a strongly saturated class. 
\end{lemma}

\begin{proof}
	As colimits are universal in $\pre(\Theta_n)$, the functors $(-) \times_{C_i} \nu H$ preserves all small colimits. Thus the class $[(-)\times_{C_i} \nu H]^{-1}(T_{\Theta_n})$ is a saturated class in $\pre(\Theta_n)$. Taking appropriate intersections of these classes and $T_{\Theta_n}$ yields the three classes in question. 
\end{proof}

We aim to show that the $\infty$-category $\CSS(\Theta_n)$ is a theory of $(\infty,n)$-categories. For this we need to prove Axioms (R.1-4) of Th.~\ref{Thm:RecognitionForPresheaves}. The most difficult property, (R.1), would follow from Lemma \ref{lma:NewR1} and Lemma \ref{lma:TThetanContainsS00} if we also knew the identity $T_{\Theta_n} = T^{(a)}_{\Theta_n}$. As these are saturated classes and $T^{(a)}_{\Theta_n} \subseteq T_{\Theta_n}$, it is enough to show that the generators $\mathrm{Segal}_{\Theta_n}$ and $\mathrm{Comp}_{\Theta_n}$ of $T_{\Theta_n}$ are contained in $T_{\Theta_n}^{(a)}$. We will ultimately prove this by an inductive argument, but first we need some preliminaries.

First, we note the following.
\begin{lemma} \label{lma:TC=T}
	One has $T_{\Theta_n} = T_{\Theta_n}^{(c)}$.
\end{lemma}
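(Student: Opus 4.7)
The plan is to exploit that $T_{\Theta_n}^{(c)}\subseteq T_{\Theta_n}$ is itself strongly saturated (by the preceding lemma), so the reverse containment reduces to showing that every morphism in the generating set $\mathrm{Segal}_{\Theta_n}\cup\mathrm{Comp}_{\Theta_n}$ of $T_{\Theta_n}$ lies in $T_{\Theta_n}^{(c)}$. I would fix such a generator $f\colon U\to V$ along with an inclusion $\iota\colon C_j\hookrightarrow C_i$ of cells and a morphism $\eta\colon V\to \nu C_i$, and analyze the pullback $f\times_{\nu C_i}\nu C_j$.

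The technical core would be the following combinatorial lemma, which I would prove first by induction on $n$ via the wreath-product description $\Theta_n=\Delta\wr\Theta_{n-1}$: for any object $o=([m];o_1,\ldots,o_m)\in\Theta_n$ and any map $\nu o\to\nu C_i$, the pullback $\nu o\times_{\nu C_i}\nu C_j$ is representable by another object of $\Theta_n$ (or is the empty presheaf). Concretely, a functor $o\to C_i$ is determined by a partition of the vertices $\{0,\ldots,m\}$ into consecutive intervals labelled by $0,\ldots,i$, together with a compatible (recursively defined) map on hom-objects into $C_{i-1}$; restricting to the sub-interval labelled by the image of $\iota$ yields the desired object of $\Theta_n$.

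With this lemma in hand, both cases follow from universality of colimits in $\pre(\Theta_n)$. For a Segal generator
\begin{equation*}
j(\{0,\ldots,k\};o_1,\ldots,o_k)\cup^{j\{k\}} j(\{k,\ldots,m\};o_{k+1},\ldots,o_m)\to j([m];o_1,\ldots,o_m),
\end{equation*}
the pullback is a pushout of pullbacks of the corners; by the lemma each corner is either empty or representable, and the resulting map is either an isomorphism (when the cut point is lost upon restriction) or again a Segal generator for a restricted object, hence in $T_{\Theta_n}$ in either case. The completeness generators in $\mathrm{Comp}_{\Theta_n}$, built from $\iota_!$ and $\sigma_!$ applied to $K\to\Delta^0$, are handled analogously: pullback along a cell inclusion either collapses the generator to an isomorphism or returns a morphism in the same family.

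The main obstacle I anticipate is the bookkeeping for the recursive wreath-product structure when proving the representability lemma, particularly tracking how compositions of nondegenerate cells interact with the $\sigma$- and $\iota$-components of the Segal generators. A clean way to proceed may be to reduce, via Pr.~\ref{prop:ThetaNSegalAreRetractsOfDeltaNSegal}, to the analogous statement on the grid category $\Delta^{\times n}$, where the pullback of a box-product with a cell decomposes coordinatewise and the claim becomes almost tautological; the closure of $T_{\Theta_n}^{(c)}$ under retracts (being saturated) then transports the result back to $\Theta_n$.
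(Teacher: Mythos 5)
Your strategy is workable in outline, but it is a very long road to a destination the paper reaches in two sentences, and the shortcut is exactly what the class $J_c$ was designed to exploit. The paper's proof observes that every cell inclusion $C_j\hookrightarrow C_i$ in $\G_n$ is a \emph{split} monomorphism: there is a retraction $C_i\to C_j$ (collapse the top-dimensional cells) admitting the inclusion as a section. It then asserts that pulling back along such a split inclusion exhibits $U\times_{C_i}C_j\to V\times_{C_i}C_j$ as a retract of $U\to V$, and since the strongly saturated class $T_{\Theta_n}$ is closed under retracts, the conclusion holds for \emph{every} $f\in T_{\Theta_n}$ at once --- no reduction to generators, no wreath-product combinatorics, no case analysis. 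This is precisely why the paper can dispose of $T^{(c)}_{\Theta_n}$ immediately while reserving the heavy combinatorial machinery (Lm.~\ref{lma:NonSuspensionSegalIsInTB+TC}, Th.~\ref{thm:ThetanR1}) for the genuinely hard class $T^{(b)}_{\Theta_n}$, where the maps $H\to C_i$ do not split. Your proposal essentially re-runs a special case of that hard analysis.

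Beyond the inefficiency, two details of your sketch would need repair. First, your representability lemma is false as stated: for $o=([m];o_1,\dots,o_m)$ mapping nondegenerately to $C_i=([1];C_{i-1})$ with distinguished slot $o_k\to C_{i-1}$, the pullback along $C_j\hookrightarrow C_i$ replaces $o_k$ by $o_k\times_{C_{i-1}}C_{j-1}$, and when this is empty (e.g.\ when $j-1=0$ and $o_k\to C_{i-1}$ misses the chosen object) the result is a disjoint union of two representables rather than a representable or the empty presheaf; correspondingly the pulled-back Segal map is not literally ``again a Segal generator'' but a map of the form treated in Lm.~\ref{lma:MoremapsinTTheta}, which you would need to invoke. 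You would also need to handle degenerate structure maps $V\to C_i$, since the definition of $T^{(c)}_{\Theta_n}$ quantifies over all of them and the reduction to nondegenerate ones (Lm.~\ref{lma:OnlyNeedNondegenerates}) appears only after this lemma in the paper. Second, the proposed reduction to $\Delta^{\times n}$ via Pr.~\ref{prop:ThetaNSegalAreRetractsOfDeltaNSegal} does not obviously go through: $(\delta_n)_!$ preserves colimits but not the fiber products over cells that define membership in $T^{(c)}_{\Theta_n}$, so the coordinatewise computation on grids does not transport along the retraction without further argument.
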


\begin{proof}
By Lemma \ref{lma:TThetaX_strongsat}, it is enough to check that the generators of $\mathrm{Segal}_{\Theta_n}$ and $\mathrm{Comp}_{\Theta_n}$ of $T_{\Theta_n}$ are contained in $T_{\Theta_n}^{(c)}$. For that assume that $[U \to V]$ is a generator of $T_{\Theta_n}$ and let $C_j \hookrightarrow C_i$ be an inclusion. We wish to demonstrate that for all $V \to C_i$ we have that
\begin{equation} \label{eqn:fiberproductofgen}
	U \times_{C_i} C_j \to V \times_{C_i} C_j
\end{equation}
is in $T_{\Theta_n}$. 
Recall that
\begin{equation*}
\mathrm{Comp}_{\Theta_n} = \iota_{!}\mathrm{Comp}_\Delta \cup \sigma_! \mathrm{Comp}_{\Theta_{n-1}}\textrm{\quad and\quad }\mathrm{Segal}_{\Theta_n} = \mathrm{Segal}_{\Theta_{n}} \cup \sigma_{!} \mathrm{Segal}_{\Theta_{n-1}}.
\end{equation*}
There are several cases:
\begin{enumerate}
	\item $i=j$. In this trivial case (\ref{eqn:fiberproductofgen}) reduces to $[U \to V] \in T_{\Theta_n}$.
	\item The morphism $V \to C_i$ factors as $V \to C_0 \to C_i$. In this case the fiber product $C_j \times_{C_i} C_0$ is either $C_0$ or empty. In the latter case (\ref{eqn:fiberproductofgen}) is an isomorphism, and in the former case it is $[U \to V] \in T_{\Theta_n}$. Notice that this case covers $\iota_{!}\mathrm{Comp}_\Delta$. 
	\item $j=0< i$ and the map $V \to C_i$ does not factor through $C_0$. In this case it follows that $[U \to V]$ is not in  $\iota_{!}\mathrm{Comp}_\Delta$, and hence 
	\begin{equation*}
		V = ([m]; o_1, ..., o_m)
	\end{equation*}
	is representable with $m \neq 0$. Moreover the map
	\begin{equation*}
V \to C_i = ([1]; C_{i-1})
\end{equation*}
	consists of a surjective map $[m] \to [1]$ (which specifies a unique $0< k \leq m$; the inverse image of $0 \in [1]$ consists of all elements strictly less than $k$) together with map $o_k \to C_{i-1}$. A direct calculation shows that in this situation (\ref{eqn:fiberproductofgen}) is either an isomorphism or a generator of $T_{\Theta_n}$. 
	\item $0<j \leq i$, the map
	\begin{equation*}
[U \to V] \in \sigma_! \mathrm{Comp}_{\Theta_{n-1}} \cup \sigma_{!} \mathrm{Segal}_{\Theta_{n-1}}
\end{equation*}
	is a suspension, and the map $V \to C_i$ does not factor through $C_0$. It follows that $V \to C_i$ is the suspension of a map. This case then follows by induction and Lemma \ref{lma:propertyOfSigma}.
	\item The final case is when $0< j < i$, the map $[U \to V] \in Se_{\Theta_n}$ is not a suspension, nor in $\mathrm{Comp}_{\Theta_n}$, and the map $V \to C_i$ does not factor through $C_0$. In this case (\ref{eqn:fiberproductofgen}) is a map of the form described in Lemma \ref{lma:MoremapsinTTheta}. \qedhere
\end{enumerate} 		
\end{proof}

\begin{remark*}
	We thank Charles Rezk for finding a critical gap in an earlier preprint version of the above proof. Correcting this led us to restructure many of the arguments in this section.  
\end{remark*}

\noindent Armed with this, we now reduce the problem to verifying that $T_{\Theta_n}=T^{(b)}_{\Theta_n}$.

\begin{lemma} \label{lma:ReductionOfTAtoTB+TC}
	If  $T^{(b)}_{\Theta_n}$ coincides with $T_{\Theta_n}$, then so does the class $T^{(a)}_{\Theta_n}$. 
\end{lemma}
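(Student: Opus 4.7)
The proof proceeds by verifying that every morphism of $T_{\Theta_n}$ lies in $T^{(a)}_{\Theta_n}$; the reverse containment is automatic from the definitions, so this suffices. Equivalently, let $X\subseteq \Upsilon_n$ denote the full subcategory of those objects $H$ for which, for every $f\in T_{\Theta_n}$, every morphism $h\colon H\to C_i$, and every morphism from the codomain of $f$ to $C_i$, the pullback $f\times_{C_i}\nu H$ lies in $T_{\Theta_n}$. The plan is to show $X=\Upsilon_n$ by induction on the inductive construction of $\Upsilon_n$ from its subcategory $\Theta_n$ via retracts and fiber products over cells.

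First I would verify the base case $\Theta_n\subseteq X$. Given $H\in\Theta_n$ and an arbitrary $h\colon H\to C_i$, let $j\leq i$ be minimal such that $h$ factors as $H\to C_j\hookrightarrow C_i$; by minimality, the induced map $H\to C_j$ is nondegenerate. Associativity of fiber products then gives an isomorphism
\begin{equation*}
f\times_{C_i}\nu H\;\cong\;(f\times_{C_i}\nu C_j)\times_{\nu C_j}\nu H.
\end{equation*}
The inner pullback lies in $T_{\Theta_n}$ by Lemma~\ref{lma:TC=T}, since $C_j\hookrightarrow C_i$ is a cell inclusion; the outer pullback then lies in $T_{\Theta_n}$ by the hypothesis $T^{(b)}_{\Theta_n}=T_{\Theta_n}$ applied to the nondegenerate $\Theta_n$-map $H\to C_j$.

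Next I would handle the inductive step for retracts: if $H$ is a retract of $H'\in X$, then $\nu H$ is a retract of $\nu H'$ in $\pre(\Theta_n)$, and composing with the retraction $H'\to H\to C_i$ exhibits $f\times_{C_i}\nu H$ as a retract of $f\times_{C_i}\nu H'$ in the arrow category of $\pre(\Theta_n)$. Closure of $T_{\Theta_n}$ under retracts then yields $H\in X$.

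The remaining and most delicate case is fiber products over cells. Given $H_1,H_2\in X$ with $g_j\colon H_j\to C_k$ and a morphism $h\colon H_1\times_{C_k}H_2\to C_i$, one must show that the pullback lies in $T_{\Theta_n}$. The main obstacle here is that $h$ does not canonically factor through either $H_1$ or $H_2$, so the inductive hypotheses on the $H_j$ cannot be invoked directly. The plan is to exploit universality of colimits in $\pre(\Theta_n)$ together with the defining pullback square $\nu H=\nu H_1\times_{\nu C_k}\nu H_2$ to re-express $f\times_{C_i}\nu H$ as an iterated pullback alternating between base changes over $C_i$—which are controlled by Lemma~\ref{lma:TC=T} and the already-established base case—and base changes over $C_k$—which are controlled by the inductive hypotheses $H_j\in X$. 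Carrying this reorganization out in full, while keeping track of the compatibility with $h$, is where I expect the bulk of the technical work to lie.
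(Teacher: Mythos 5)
Your treatment of representable $H\in\Theta_n$ is exactly the paper's core argument: factor $H\to C_j\hookrightarrow C_i$ with the first map nondegenerate, base-change first along the cell inclusion (Lemma~\ref{lma:TC=T}, i.e.\ $T^{(c)}_{\Theta_n}=T_{\Theta_n}$) and then along the nondegenerate map (the hypothesis $T^{(b)}_{\Theta_n}=T_{\Theta_n}$). Your retract step is also fine. The gap is in your third step: the passage from $\Theta_n$ to all of $\Upsilon_n$ via fiber products over cells is not carried out, and the plan you sketch for it does not obviously work. Given $H=H_1\times_{C_k}H_2$ and an arbitrary $h\colon H\to C_i$, there is no canonical relationship between $h$ and the structure maps to $C_k$, so there is no evident way to ``alternate'' base changes over $C_i$ and $C_k$ in a manner that lands you back in the inductive hypotheses on $H_1$ and $H_2$. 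You have correctly identified this as the hard point, but identifying it is not the same as resolving it, and as written the proof is incomplete.

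The paper sidesteps this entirely with a one-line reduction that your argument is missing: since $\Theta_n$ strongly generates $\pre(\Theta_n)$, the presheaf $\nu H$ for any $H\in\Upsilon_n$ is a canonical colimit of representables $j(o)$ with $o\in\Theta_n$; because colimits in $\pre(\Theta_n)$ are universal, $f\times_{C_i}\nu H$ is the corresponding colimit of the morphisms $f\times_{C_i}j(o)$, each of which lies in $T_{\Theta_n}$ by the representable case; and a strongly saturated class is closed under such colimits. This makes your entire induction on the construction of $\Upsilon_n$ unnecessary and, in particular, dissolves the fiber-product case. I would replace your third step with this colimit decomposition.
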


\begin{proof}
	First note that as $\Theta_n$ is dense in $\pre(\Theta_n)$, and $T_{\Theta_n}$ is strongly saturated, it is enough to consider $H \in \Theta_n$ representable. 
	Let $f\colon U \to V$ be a morphism in $T_{\Theta_n}$, let $V \to C_i$ be given, and let $H \to C_i$ be arbitrary. There exists a unique factorization $H \to C_k \hookrightarrow C_i$, with $H \to C_k$ nondegenerate. Consider the following diagram of pullbacks in $\pre(\Theta_n)$:
	\begin{center}
	\begin{tikzpicture}
		\node (LT) at (0, 1) {$U''$};
		\node (LM) at (0, 0) {$V''$};
		\node (LB) at (0, -1) {$H$};
		\node (MT) at (1.5, 1) {$U'$};
		\node (MM) at (1.5, 0) {$V'$};
		\node (MB) at (1.5, -1) {$C_k$};
		\node (RT) at (3, 1) {$U$};
		\node (RM) at (3, 0) {$V$};
		\node (RB) at (3, -1) {$C_i$};
		
		\draw [->] (LT) -- node [left] {} (LM);
		\draw [->] (LM) -- node [left] {} (LB);
		\draw [->] (MT) -- node [left] {} (MM);
		\draw [->] (MM) -- node [left] {} (MB);
		\draw [->] (RT) -- node [right,font=\scriptsize] {$f$} (RM);
		\draw [->] (RM) -- node [right] {} (RB);
		
		\draw [->] (LT) -- node [above] {} (MT);
		\draw [->] (MT) -- node [above] {} (RT);
		\draw [->] (LM) -- node [above] {} (MM);
		\draw [->] (MM) -- node [above] {} (RM);
		\draw [->] (LB) -- node [below] {} (MB);
		\draw [right hook->] (MB) -- node [above] {} (RB);

		\node at (0.25, 0.75) {$\ulcorner$};
		\node at (0.25, -0.25) {$\ulcorner$};
		\node at (1.75, 0.75) {$\ulcorner$};
		\node at (1.75, -0.25) {$\ulcorner$};
		
	\end{tikzpicture}
	\end{center}
	Since $T^{(c)}_{\Theta_n} = T_{\Theta_n}$, we have $[U' \to V'] \in T_{\Theta_n}$, and if $T^{(b)}_{\Theta_n} = T_{\Theta_n}$, then we also have $[U'' \to V''] \in T_{\Theta_n}$, as desired.	
\end{proof}

\begin{lemma} \label{lma:OnlyNeedNondegenerates}
	For each $x\in\{a,b\}$, we have
\begin{equation*}
	T_{\Theta_n}^{(x)}=\left\{ [f\colon U\to V] \in T_{\Theta_n} \; \middle| \; 
	\begin{aligned}
		&\textrm{for any }[H\to C_i]\in J_x\textrm{ and any nondegenerate } \\ 
		&[V\to C_i]\in\pre(\Theta_n)\textrm{, one has }f\times_{C_i}\nu H \in T_{\Theta_n}
	\end{aligned}
	 \right\}
\end{equation*}
	In other words, to verify that $f\colon U \to V$ is in one of these classes, it suffices to consider only those fiber products $f \times_{C_i} \nu H$ with $V \to C_i$ nondegenerate.
\end{lemma}

\begin{proof}
	Let us focus on the case $x=a$. Let $f\colon U \to V$ be in class given on the right-hand side of the asserted identity. We wish to show that $f \in T_{\Theta_n}^{(a)} $, that is for any pair of morphism $H \to C_i$ and $V \to C_i$ we have
	\begin{equation*}
		 U' = U \times_{C_i} H  \to V \times_{C_i} H = V'
	\end{equation*}
is in $T_{\Theta_n}$. This follows as there exists a factorization $V \to C_k \to C_i$ and
	 a diagram of pullbacks:
	\begin{center}
	\begin{tikzpicture}
		\node (LT) at (0, 1) {$U'$};
		\node (LB) at (0, 0) {$U$};
		\node (RT) at (1.5, 1) {$V'$};
		\node (RB) at (1.5, 0) {$V$};
		\node (RRT) at (3, 1) {$H'$};
		\node (RRB) at (3, 0) {$C_k$};
		\node (RRRT) at (4.5, 1) {$H$};
		\node (RRRB) at (4.5, 0) {$C_i$};
		
		\draw [->] (LT) -- node [left] {$$} (LB);
		\draw [->] (RT) -- node [right] {$$} (RB);
		\draw [->] (RRT) -- node [right] {$$} (RRB);
		\draw [->] (RRRT) -- node [right] {$$} (RRRB);
		
		\draw [->] (LT) -- node [above] {$$} (RT);
		\draw [->] (RT) -- node [below] {$$} (RRT);
		\draw [->] (RRT) -- node [below] {$$} (RRRT);

		\draw [->] (LB) -- node [below] {$$} (RB);
		\draw [->] (RB) -- node [below] {$$} (RRB);
		\draw [right hook->] (RRB) -- node [below] {$$} (RRRB);
		
		\node at (0.25, 0.75) {$\ulcorner$};
		\node at (1.75, 0.75) {$\ulcorner$};
		\node at (3.25, 0.75) {$\ulcorner$};
		
	\end{tikzpicture}
	\end{center}
	such that $V \to C_k$ is nondegenerate. The analogous result for  $T_{\Theta_n}^{(b)}$ 
	follows by the same argument and the observation that $H' \to C_k$ is nondegenerate 
	if $H \to C_i$ is such.
\end{proof}

Now we settle an important first case of the equality $T_{\Theta_n}=T^{(b)}_{\Theta_n}$.

\begin{lemma} \label{lma:NonSuspensionSegalIsInTB+TC}
	Let $[U \to V] \in \mathrm{Segal}_{\Theta_n}$ be a morphism that is not contained in $\sigma_!(\mathrm{Segal}_{\Theta_{n-1}})$. Let $V \to C_i$ be nondegenerate, and let $[H \to C_i]\in J_b$ be a nondegenerate map in $\Theta_n$. Then the morphism $U \times_{C_i}  H \to V\times_{C_i}  H$ is contained in $T_{\Theta_n}$. 
\end{lemma}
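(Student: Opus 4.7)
The plan is to unwind the wreath product structure of everything in sight and reduce to Lemma \ref{lma:MoremapsinTTheta}. Write $V = ([m]; o_1, \ldots, o_m)$ and $H = ([q]; h_1, \ldots, h_q)$. A morphism in $\mathrm{Segal}_{\Theta_n}$ that does not lie in $\sigma_!(\mathrm{Segal}_{\Theta_{n-1}})$ has the form
\begin{equation*}
U = \sigma^{\{0,\ldots,k\}}_!(o_1, \ldots, o_k) \cup^{\sigma^{\{k\}}_!} \sigma^{\{k, k+1, \ldots, m\}}_!(o_{k+1}, \ldots, o_m) \longrightarrow \sigma^{[m]}_!(o_1,\ldots,o_m) = V
\end{equation*}
for some $0 \leq k \leq m$. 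Since colimits are universal in $\pre(\Theta_n)$, the functor $(-) \times_{C_i} H$ preserves this pushout, so $U \times_{C_i} H \to V \times_{C_i} H$ is the morphism expressing the corresponding Segal-type decomposition of $V \times_{C_i} H$; the task becomes to realize this as a colimit of cobase changes of the morphisms supplied by Lemma \ref{lma:MoremapsinTTheta}.

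First I would extract the wreath-product data of the nondegenerate maps $V, H \to C_i$. When $i = 0$ both maps are unique; when $i \geq 1$, writing $C_i = ([1]; C_{i-1})$, the definition of morphisms in $\Theta_n = \Delta \wr \Theta_{n-1}$ shows that the nondegenerate map $V \to C_i$ is determined by a unique position $1 \leq a \leq m$ and a nondegenerate morphism $\varphi\colon o_a \to C_{i-1}$, with vertices $0, \ldots, a-1$ of $V$ collapsed to $\bot$ and $a, \ldots, m$ to $\top$; similarly $H \to C_i$ yields $b \in \{1, \ldots, q\}$ and a nondegenerate $\psi\colon h_b \to C_{i-1}$.

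The central step is to express $V \times_{C_i} H$ as an explicit colimit of representables in $\pre(\Theta_n)$. Computing the fiber product in $\gaunt_n$ (and using that $\nu$ preserves limits) shows that $V \times_{C_i} H$ is a colimit, over monotone ``shuffle'' lattice paths $\pi$ from $(0,0)$ to $(m,q)$ that respect the $\bot/\top$ partition (when $i \geq 1$), or over all such shuffles (when $i = 0$), of representable objects
\begin{equation*}
P_\pi = \sigma^{[N_\pi]}_!\bigl(x^\pi_1, \ldots, x^\pi_{N_\pi}\bigr),
\end{equation*}
where each $x^\pi_j$ is one of the $o_\cdot$, one of the $h_\cdot$, or (at the unique ``crossing'' step of $\pi$, when $i \geq 1$) the fiber product $o_a \times_{C_{i-1}} h_b$; the $P_\pi$ are glued to one another along their common faces. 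Restricting the Segal cut of $V$ at position $k$ to each $P_\pi$ produces a Segal-type cut of $P_\pi$ at a position $k_\pi$ determined by where $\pi$ first crosses column $k$.

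By choosing a linear ordering of the shuffles that refines inclusion of faces, the morphism $U \times_{C_i} H \to V \times_{C_i} H$ can then be exhibited as a transfinite composition of cobase changes of morphisms of the form covered by Lemma \ref{lma:MoremapsinTTheta} (possibly augmented by isomorphisms, in the cases when $k_\pi$ falls between two consecutive $\sigma$-factors of $\sigma^{[N_\pi]}_!$, so that the cut is automatic). Strong saturation of $T_{\Theta_n}$ delivers the conclusion. The principal obstacle is the combinatorial bookkeeping for this shuffle decomposition and the verification that the various Segal cuts are compatible across overlapping shuffles; the most delicate sub-cases are $k = a-1$ and $k = a$, where the Segal cut meets the nondegenerate position, and where the fiber-product factor $o_a \times_{C_{i-1}} h_b$ appearing in each $P_\pi$ must be tracked through the gluing.
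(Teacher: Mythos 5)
Your decomposition is essentially the one the paper uses: the ``shuffle lattice paths'' $\pi$ respecting the $\bot/\top$ partition are exactly the inclusions $\delta=(\delta',\delta'')\colon[p]\to[m]\times_{[1]}[n]$ of the paper's proof, your $P_\pi$ are its subobjects $C_\delta=\sigma_!^{[p]}(c_1,\dots,c_p)$ (with the single fiber-product factor $o_a\times_{C_{i-1}}h_b$ at the crossing step), and the reduction of each piece to Lm.~\ref{lma:MoremapsinTTheta} is the same. Where you diverge is the assembly, and that is exactly where your argument has a gap. You assert that $V\times_{C_i}H$ ``is a colimit'' of the $P_\pi$ glued along common faces and then propose to realize $U\times_{C_i}H\to V\times_{C_i}H$ as a transfinite composition of cobase changes, deferring the ``combinatorial bookkeeping.'' But the claim that a set-level union of subobjects is a \emph{homotopy} colimit in $\pre(\Theta_n)$ is not automatic, and the compatibility of the Segal cuts across overlapping shuffles that you flag as delicate is precisely the content of the lemma: the paper notes that Rezk's original published argument for the $i=0$ case foundered on this very point.

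The paper sidesteps the linear-ordering/attachment bookkeeping entirely. It takes $W$ to be the disjoint union of the \emph{maximal} subobjects $C_\delta$, observes that $W\to V\times_{C_i}H$ is a surjection of set-valued presheaves and hence an effective epimorphism in the $\infty$-topos $\pre(\Theta_n)$, and forms the \v{C}ech nerve $B_\bullet$; descent gives $\colim_\Delta B_\bullet\simeq V\times_{C_i}H$, and universality of colimits gives the analogous statement for $A_\bullet=B_\bullet\times_{(V\times_{C_i}H)}(U\times_{C_i}H)$. Since every multiple intersection of the $C_\delta$ is again a (disjoint union of) $C_\delta$'s, each levelwise map $A_p\to B_p$ is a coproduct of maps of exactly the form treated by Lm.~\ref{lma:MoremapsinTTheta}, and strong saturation of $T_{\Theta_n}$ under colimits in the arrow category finishes the proof. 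If you replace your transfinite-composition step with this \v{C}ech-nerve argument (or otherwise prove that your gluing diagram computes the homotopy colimit and that each attachment is a cobase change of a map in $T_{\Theta_n}$), your proof goes through; as written, the crucial step is asserted rather than proved.
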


\begin{proof}
 For the special case $i =0$, a more general version of this statement was proven by Rezk \cite[Proposition 6.6]{Rezk} and forms one of the cornerstone results of that work. Our current proof builds on Rezk's ideas.


The fundamental argument is to construct a category $\Q$ along with a functorial assignment of commuting squares
\begin{center}
\begin{tikzpicture}
	\node (LT) at (0, 1.5) {$A_\alpha$};
	\node (LB) at (0, 0) {$B_\alpha$};
	\node (RT) at (2, 1.5) {$U \times_{C_i}  H $};
	\node (RB) at (2, 0) {$V \times_{C_i}  H $};
	\draw [->] (LT) -- node [left] {$\wr$} (LB);
	\draw [->] (LT) -- node [above] {$$} (RT);
	\draw [->] (RT) -- node [right] {$$} (RB);
	\draw [->] (LB) -- node [below] {$$} (RB);
\end{tikzpicture}
\end{center}
for each $\alpha \in \Q$. This assignment is required to satisfy a host of conditions. 

First, each of the functors $A,B: \Q \to \pre(\Theta_n)$ is required to factor through $\tau_{\leq 0}\pre(\Theta_n)$, the category of 0-truncated objects. The 0-truncated objects of $\pre(\Theta_n)$ consist precisely of those presheaves of spaces taking values in the homotopically discrete spaces. There is no harm regarding such objects simply as ordinary set-valued presheaves, and we will do so freely. 

Second, we require that for each $\alpha \in \Q$ the natural morphism $A_\alpha \to B_\alpha$ is in the class $T_{\Theta_n}$. 
As $T_{\Theta_n}$ is saturated, this second condition implies that the natural map $\colim_\Q A \to \colim_\Q B$ is also in $T_{\Theta_n}$, where these colimits are taken in the $\infty$-category $\pre(\Theta_n)$ (hence are equivalently homotopy colimits for a levelwise model structure on simplical preseheaves, see Rk.~\ref{rk:CSSThetaInfty=Modelcat}).  


Third and last, we require that the natural maps $\colim_\Q A \to U \times_{C_i} H$ and $\colim_\Q B \to V \times_{C_i} H$ are equivalences in $\pre(\Theta_n)$ (i.e., levelwise weak equivalences of space-valued presheaves). If all of the above properties hold, then we obtain a natural commuting square
\begin{center}
\begin{tikzpicture}
	\node (LT) at (0, 1.5) {$\colim_\Q A$};
	\node (LB) at (0, 0) {$\colim_\Q B$};
	\node (RT) at (3, 1.5) {$U \times_{C_i}  H $};
	\node (RB) at (3, 0) {$V \times_{C_i}  H $};
	\draw [->] (LT) -- node [left] {$\wr$} (LB);
	\draw [->] (LT) -- node [above] {$\simeq$} (RT);
	\draw [->] (RT) -- node [right] {$$} (RB);
	\draw [->] (LB) -- node [above] {$\simeq$} (RB);
\end{tikzpicture}
\end{center}
in which the indicated morphisms are in the class $T_{\Theta_n}$. As this class is saturated it follows that $U \times_{C_i}  H \to V\times_{C_i}  H$ is also in this class. Thus if such a $\Q$ and associated functors can be produced, we will have completed the proof.  

At this point we deviate from Rezk's treatment. Specifically our category $\Q$ and associated functors will differ from his. 
We will focus on the more complicated case $i>0$, and leave the necessary simplifications in the case $i=0$ to the reader (or simply refer the reader to \cite[Proposition 6.6]{Rezk} ).

Under the assumptions of the statement of the lemma we have the following identifications of presheaves:
\begin{align*}
	U &= j({\{0, \dots, k\}}; o_1, \dots, o_k) \cup^{j({\{ k \}})} j({\{ k, k+1, \dots, m\}}; o_{k+1}, \dots, o_{m}) \\
	V &= j({[m]}; o_1, \dots, o_m)  \\
	H &= j([n]; u_1, \dots, u_n)
\end{align*}
where $ 0 \leq k \leq m$ and  $o_\alpha, u_\beta \in \Theta_{n-1}$ are given. 
If $i>0$, then the $i$-cell is the representable presheaf $j([1]; C_{i-1})$. A nondegenerate map $V \to C_i$ includes a nondegenerate map $f:[m] \to [1]$, and likewise a nondegenerate map $H \to C_i$ includes a nondegenerate map $g:[n] \to [1]$. Let $m'$ be the fiber over $0 \in [1]$, and let $m''$ be the fiber over $1$. Then $[m] = [m'] \cdot [m'']$ is the ordered concatenation of $[m']$ and $[m'']$. Similarly $[n] = [n'] \cdot [n'']$ is the ordered concatenation of the preimages of $0$ and $1$ under $g$. 

Let $\delta = (\delta', \delta''): [p] \to [m] \times_{[1]} [n]$ be a map which is an inclusion. There is a unique $-1 \leq r \leq p$ such that under the composite $[p] \to [m] \times_{[1]} [n] \to [1]$, an element $s$ maps to $0$ if and only if $s\leq r$ (hence maps to $1$ if and only if $s > r$). 
Associated to $\delta$ we have a subobject $C_\delta$ of $V \times_{C_i} H$, 
of the form $C_\delta = \sigma_!^{[p]}(c_1, \dots, c_p)$, where $c_{\ell}$ is given by the following formula:
\begin{equation*}
	\prod_{\delta'(\ell-1) < \alpha \leq \delta'(\ell)} o_{\alpha} \times \prod_{\delta''(\ell-1) < \beta \leq \delta''(\ell)} u_{\beta} 
\end{equation*}
if $\ell-1 \neq r$, and if $\ell-1 = r$ by
\begin{equation*}
	 \left(\prod_{\alpha} o_{\alpha} \right) \times \left(\prod_{\beta} u_{\beta} \right) \times \left( o_{m'} \times_{C_i} u_{n'} \right) \times \left( \prod_{\lambda} o_{\lambda} \right) \times \left(\prod_{\epsilon} u_{\epsilon} \right)
\end{equation*}
where the indices range over all $\delta'(\ell-1) < \alpha < m'$,   $\delta''(\ell-1) < \beta < n'$, $m' < \lambda \leq \delta'(\ell)$, and $m' < \beta \leq \epsilon''(\ell-1)$.
We have found the graphical image in Figure~\ref{fig:grids} to be especially useful in understanding the combinatorics of these subobjects. 
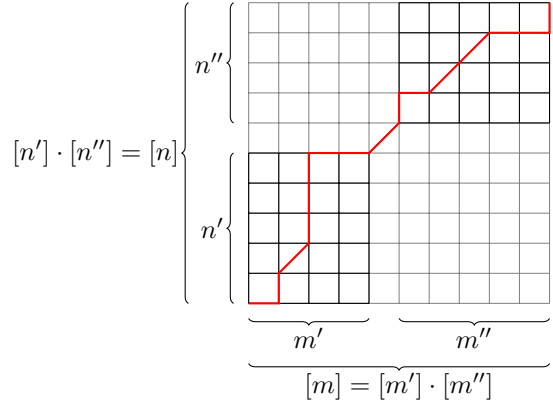
\begin{figure}[ht]
\begin{center}
\begin{tikzpicture}[decoration = brace, scale = 2]
	\draw[step = 2mm, help lines] (0,0) grid (2,2);
	\draw[step = 2mm] (0,0) grid (0.8,1);
	\draw[step = 2mm] (1, 1.2) grid (2,2);
	\draw (0.8,1) -- (1,1.2) -- (1, 2);
	\draw [decorate] (0.8, -0.1) -- node [below] {$m'$} (0,-0.1);
	\draw [decorate] (2,-0.1) -- node [below] {$m''$} (1, -0.1);
	\draw [decorate] (2,-0.4) -- node [below] {$[m] = [m'] \cdot [m'']$} (0, -0.4);
	
	\draw [decorate]  (-0.1,1.2) -- node [left] {$n''$} (-0.1, 2);
	\draw [decorate]  (-0.1, 0) -- node [left] {$n'$} (-0.1,1);
	\draw [decorate]  (-0.4,0) -- node [left] {$ [n'] \cdot [n''] = [n]$} (-0.4, 2);
	
	\draw[red, thick] (0,0) -- (0.2, 0) -- (0.2, 0.2) -- (0.4, 0.4) -- (0.4, 1) -- (0.8, 1)
		-- (1,1.2) -- (1, 1.4) -- (1.2, 1.4) -- (1.6, 1.8) -- (2, 1.8) -- (2,2); 
	
\end{tikzpicture}
\end{center}
	\caption{A graphical depiction of a typical map (shown in red) $\delta: [p] \to [m] \times_{[1]} [n]$.}
	\label{fig:grids}
\end{figure}

As subobjects of $V \times_{C_i} H$, the $C_\delta$ are naturally arranged into a poset. Let $W$ denote the disjoint union of all the maximal elements of this poset. Let $B_\bullet$ denote the simplicial \v{C}ech nerve associated to the morphism $W \to V \times_{C_i} H$. Each layer of $B_\bullet$ consists of a disjoint union of certain $C_\delta$. The map $W \to V \times_{C_i} H$ is a surjective map of set-valued presheaves.  It follows that it is also an effective epimorphism in the $\infty$-topos $\pre(\Theta_n)$, and hence \cite[Corollary 6.2.3.5]{HTT} the (homotopy) colimit of the simplcial diagram $B_\bullet$ is equivalent to $V \times_{C_i} H$. We set $\Q = \Delta$ and $B = B_\bullet$. 

We define $A_\bullet$ to be the fiber product of $B_\bullet$ with $U \times_{C_i} H$ over $V \times_{C_i} H$. Because colimits in $\infty$-topoi are universal, we have
\begin{align*}
	\colim_\Q A &\simeq \colim_\Q \left( B \times_{(V \times_{C_i} H)} (U \times_{C_i} H) \right) \\
	& \simeq \left( \colim_\Delta B_\bullet \right)  \times_{(V \times_{C_i} H)} (U \times_{C_i} H) \\ &\simeq U \times_{C_i} H.
\end{align*}
Thus all that remains is to show that the natural transformation $A_\bullet \to B_\bullet$ is levelwise in $T_{\Theta_n}$. 

As each layer of $B_\bullet$ is a disjoint union of certain $C_\delta$, it is sufficient to show that the map
\begin{equation*}
	C_\delta \times_{(V \times_{C_i} H)} (U \times_{C_i} H) \to C_{\delta}
\end{equation*} 
is in $T_\Theta$ for each $C_\delta$. As in the previous construction, there exist unique $0 \leq r \leq s \leq p$ such that $\delta'(t) < k$ if and only if $t < r$, and  $k < \delta'(t)$ if and only if $s < t$. The interval $\{r, \dots, s\} \subset [p]$ is precisely the preimage of $\{k\}$ under $\delta'$. We then have
\begin{align*}
	C_\delta &\times_{(V \times_{C_i} H)} (U \times_{C_i} H)  \\
	&\cong \sigma_!^{\{0, \dots, s\}}(b_1, \dots, b_s) \cup^{\sigma^{\{r, \dots, s\}}_!(b_{r+1}, \dots, b_s)} \sigma_!^{\{r, r+1, \dots, p\}}(b_{r+1}, \dots, b_p)
\end{align*}
and so the desired result follows from Lemma \ref{lma:MoremapsinTTheta}.
\end{proof}

\begin{remark}
	We note that the construction of a $\Q$, $A$, and $B$ demonstrably satisfying the above properties appears to be somewhat delicate. In the case $i=0$ the original published proof \cite[Proposition 6.6]{Rezk} was incorrect, and a corrected proof has been supplied by Rezk in \cite[Proposition 2.1]{MR2740648}.

	It is possible to give an alternative proof of Lemma \ref{lma:NonSuspensionSegalIsInTB+TC} which builds directly on Rezk's proof in the case $i=0$. Let $\Q$ be Rezk's category $\Q_{m,n}$ as defined in \cite{MR2740648}, and following Rezk define functors $A^{(i=0)}$ and $B^{(i=0)}$ as the objects
	\begin{align*}
		A^{(i=0)}_\delta &:= V_{(\delta_1, \delta_2)^{-1}(M \times N)}(D_1, \dots, D_p), \quad \text{and}\\
		B^{(i=0)}_\delta &:= V[p](D_1, \dots, D_p),
	\end{align*}
	where we are also using the notation of \cite{MR2740648}. Then these choices satisfy the requisite properties for the case $i=0$, the most difficult being \cite[Proposition 2.3]{MR2740648}. 
	
	For general $i$, notice that we have inclusions of subobjects
\begin{align*}
		V \times_{C_i} H &\subseteq V \times H, \\
		U \times_{C_i} H &\subseteq U \times H.
	\end{align*} 
We may define $A_\delta$ and $B_\delta$ as pullbacks
	\begin{align*}
		A_\delta &= A^{(i=0)}_\delta \times_{(U \times H)} (U \times_{C_i} H), \text{ and} \\
		B_\delta &= B^{(i=0)}_\delta \times_{(V \times H)} (V \times_{C_i} H). 
	\end{align*}
	Since colimits in $\infty$-topoi are universal we have
	\begin{align*}
	\colim_\Q	A &\simeq \colim_\Q A^{(i=0)} \times_{(U \times H)} (U \times_{C_i} H) \simeq U \times_{C_i} H, \\
	\colim_\Q	B &\simeq \colim_\Q B^{(i=0)} \times_{(V \times H)} (V \times_{C_i} H) \simeq V \times_{C_i} H,	
	\end{align*}
	and so all that remains is to verify that $A_\delta \to B_\delta$ is indeed in $T_{\Theta_n}$. This can be accomplished by explicitly computing $A_\delta$ and $B_\delta$ in terms of the functors $\sigma^{[\ell]}_!$ and invoking Lemma \ref{lma:MoremapsinTTheta}.
\end{remark}

We may now complete the proof of (R.1) for complete Segal $\Theta_n$-spaces.

\begin{theorem} \label{thm:ThetanR1}
	The triple $(\Theta_n, T_{\Theta_n}, i)$ satisfies axiom (R.1), namely $i^*(S) \subset T_{\Theta_n}$.
\end{theorem}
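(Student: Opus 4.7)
The plan is to assemble the preceding lemmas into a descent along a chain of reductions, then handle what remains by induction on $n$.

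\textbf{Step 1 (Reduction).} By Lm.~\ref{lma:NewR1}, axiom (R.1) follows from (R.1-bis(a)) and (R.1-bis(b)). The first is precisely Lm.~\ref{lma:TThetanContainsS00}. The second is the statement that every generator of $T_{\Theta_n}$ lies in the class $T^{(a)}_{\Theta_n}$; by Lm.~\ref{lma:ReductionOfTAtoTB+TC} combined with Lm.~\ref{lma:TC=T}, this reduces further to the assertion $T^{(b)}_{\Theta_n}=T_{\Theta_n}$. Since $T^{(b)}_{\Theta_n}$ is strongly saturated, it is enough to check that each morphism in $\mathrm{Segal}_{\Theta_n}\cup\mathrm{Comp}_{\Theta_n}$ belongs to $T^{(b)}_{\Theta_n}$, and by Lm.~\ref{lma:OnlyNeedNondegenerates} one only needs to verify the pullback condition against nondegenerate $V\to C_i$ (and, by definition of $T^{(b)}$, nondegenerate $H\to C_i$ with $H\in\Theta_n$).

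\textbf{Step 2 (Induction).} I would induct on $n$, with the base cases $n\leq 1$ handled by the classical theory of complete Segal spaces. For the inductive step, partition the generators as
\begin{equation*}
\bigl(\mathrm{Segal}_{\Theta_n}\setminus\sigma_!\mathrm{Segal}_{\Theta_{n-1}}\bigr)\;\cup\;\sigma_!\mathrm{Segal}_{\Theta_{n-1}}\;\cup\;\iota_!\mathrm{Comp}_\Delta\;\cup\;\sigma_!\mathrm{Comp}_{\Theta_{n-1}}.
\end{equation*}
The non-suspended Segal generators are handled verbatim by Lm.~\ref{lma:NonSuspensionSegalIsInTB+TC}. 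For a suspended generator $[U\to V]=\sigma_!(f)$ (with $f\in\mathrm{Segal}_{\Theta_{n-1}}\cup\mathrm{Comp}_{\Theta_{n-1}}$) and a nondegenerate $V=\sigma_!(V')\to C_i$ with $i\geq 1$, Lm.~\ref{lma:NonDegenFromSuspension=Suspension} presents this map as $\sigma_!$ of a nondegenerate map $V'\to C_{i-1}$. A general nondegenerate $H=([m];o_1,\dots,o_m)\in\Theta_n$ together with a nondegenerate $H\to C_i$ forces the outer simplicial component to be nonconstant; using the Segal relations (which are $T_{\Theta_n}$-equivalences by definition) one decomposes $H$ as an iterated pushout over $C_0$ of the suspensions $\sigma_!(o_j)$. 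Pulling back along each $\sigma_!(o_j)\to C_i$ distributes through the suspension by Lm.~\ref{lma:propertyOfSigma}, yielding morphisms of the form $\sigma_!\bigl(f\times_{C_{i-1}} o_j\bigr)$, which lie in $\sigma_!(T_{\Theta_{n-1}})\subseteq T_{\Theta_n}$ by the inductive hypothesis (applied to $T^{(b)}_{\Theta_{n-1}}=T_{\Theta_{n-1}}$) and the final clause of Lm.~\ref{lma:propertyOfSigma}. The case $i=0$ reduces to taking the pullback against the terminal cell, i.e.\ to a plain product with $H$; here the \v{C}ech-nerve decomposition used in the proof of Lm.~\ref{lma:NonSuspensionSegalIsInTB+TC} (whose $i=0$ instance is Rezk's Pr.~6.6) applies directly to the suspended inputs.

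\textbf{Step 3 (Completeness generators).} The suspended piece $\sigma_!\mathrm{Comp}_{\Theta_{n-1}}$ is handled by the same suspension argument of Step 2. For $\iota_!\mathrm{Comp}_\Delta$, the target is $\iota_!(j[0])=C_0$, so the only nondegenerate test is $i=0$, $V=C_0$, and one must show $\iota_!(K)\times H\to H$ lies in $T_{\Theta_n}$. Using that $H$ decomposes via its wreath structure into a colimit of suspensions over copies of $C_0$, the product distributes, and each resulting factor reduces to the $n=1$ completeness statement applied inside $\Theta_n$, which belongs to $T_{\Theta_n}$ by the generating relations in $\iota_!\mathrm{Comp}_\Delta$.

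\textbf{Main obstacle.} The delicate step is the suspended-Segal case when $H\in\Theta_n$ is not itself a suspension; the distributivity of the pullback over the wreath decomposition of $H$ must be controlled carefully, and for this one must replicate the combinatorial bookkeeping (the $\delta\colon[p]\to[m]\times_{[1]}[n]$ indexing and the grid picture) from Lm.~\ref{lma:NonSuspensionSegalIsInTB+TC}, now organized around the suspended generator rather than a flat Segal generator. Once that machinery is in place, Lm.~\ref{lma:MoremapsinTTheta} closes each resulting cofactor, and the induction goes through.
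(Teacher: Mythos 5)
Your Step 1 reproduces the paper's chain of reductions exactly (Lm.~\ref{lma:NewR1} plus Lm.~\ref{lma:TThetanContainsS00}, then Lm.~\ref{lma:TC=T} and Lm.~\ref{lma:ReductionOfTAtoTB+TC} to reduce to $T^{(b)}_{\Theta_n}=T_{\Theta_n}$, then Lm.~\ref{lma:OnlyNeedNondegenerates}), and your case division in Step 2 (non-suspended generators via Lm.~\ref{lma:NonSuspensionSegalIsInTB+TC}, induction on $n$ for the suspended ones, Lm.~\ref{lma:NonDegenFromSuspension=Suspension} to push $V\to C_i$ down a level) is also the paper's. The gap is in the mechanism you propose for the remaining case. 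You want to ``decompose $H=([m];o_1,\dots,o_m)$ as an iterated pushout over $C_0$ of the $\sigma_!(o_j)$'' and then ``distribute the pullback.'' But that decomposition is not an isomorphism of presheaves: the comparison map $\sigma_!(o_1)\cup^{C_0}\cdots\cup^{C_0}\sigma_!(o_m)\to H$ is only a composite of Segal generators, i.e.\ a $T_{\Theta_n}$-equivalence. To replace $U\times_{C_i}H$ and $V\times_{C_i}H$ by the corresponding fiber products against the decomposed object, you must know that pulling back that Segal equivalence along $U\to C_i$ and $V\to C_i$ again lands in $T_{\Theta_n}$ --- which is precisely the stability property the whole argument is trying to establish. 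As stated, the step is circular. The same issue infects your Step 3 treatment of $\iota_!\mathrm{Comp}_\Delta$; the $i=0$ case is not a decomposition argument but an appeal to Rezk's Pr.~6.1 (closure of $T_{\Theta_n}$ under products with representables), which you never invoke.

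The paper's actual resolution is both different and simpler than the ``replicate the \v{C}ech-nerve bookkeeping'' plan in your final paragraph. The key observation is that a nondegenerate map $H\to C_i=([1];C_{i-1})$ singles out exactly one index $k$ (where the outer map $[m]\to[1]$ jumps) together with a single map $o_k\to C_{i-1}$, so the pullback can be computed \emph{explicitly}: $V\times_{C_i}H\cong([m];o_1,\dots,o_k\times_{C_{i-1}}V'',\dots,o_m)$, with only the $k$-th slot modified, and similarly for $U$. One then places $U'\to V'$ in a square whose other column is the evident three-fold pushout; the horizontal maps are instances of Lm.~\ref{lma:MoremapsinTTheta} (Rezk's Pr.~6.4), which is proved unconditionally and does not presuppose any pullback-stability, and the left column is a pushout of identities with $\sigma_!$ of the map $o_k\times_{C_{i-1}}U''\to o_k\times_{C_{i-1}}V''$, which is in $T_{\Theta_{n-1}}$ by the inductive hypothesis and hence in $T_{\Theta_n}$ by Lm.~\ref{lma:propertyOfSigma}. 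Two-out-of-three for the saturated class then closes the square. No second \v{C}ech-nerve argument is needed; the grid combinatorics of Lm.~\ref{lma:NonSuspensionSegalIsInTB+TC} is confined to the non-suspended case.
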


\begin{proof}
	By Lemma \ref{lma:ReductionOfTAtoTB+TC}, it is enough to show that the strongly saturated classes $T_{\Theta_n}^{(b)}$ contains the generating sets $\mathrm{Segal}_{\Theta_n}$ and $\mathrm{Comp}_{\Theta_n}$. By Lemma \ref{lma:OnlyNeedNondegenerates}, it suffices to show that for any $[U \to V] \in \mathrm{Segal}_{\Theta_n} \cup \mathrm{Comp}_{\Theta_n} $, any nondegenerate morphism $[H \to C_i]\in J_b$, and any nondegenerate morphism $V \to C_i$ of $\pre(\Theta_n)$, we must show that
	\begin{equation*}
		U' = U \times_{C_i} \nu H \to V \times_{C_i} \nu H = V'
	\end{equation*}
is contained in $T_{\Theta_n}$.  Observe the following:
\begin{itemize}
	\item If $[U \to V] \in \mathrm{Segal}_{\Theta_n}$ is not in the image of $\sigma_!$, then $U \to V$ is contained in $T_{\Theta_n}^{(b)}$ by Lemma \ref{lma:NonSuspensionSegalIsInTB+TC}.
	\item If $[U \to V] \in \mathrm{Comp}_{\Theta_n}$ is not in the image of $\sigma_!$, then $V=C_0$, and the only nondegenerate map $V \to C_i$ occurs when $i=0$. In this case $U \to V$ is in $T_{\Theta_n}^{(b)}$ by \cite[Proposition 6.1]{Rezk}. 
\end{itemize}	
Thus we may restrict our attention to those generators $U \to V$ that lie in the image of $\sigma_!$. We proceed by induction.  When $n=1$, the set of generators in the image of $\sigma_!$ is empty. 

Assume that
\begin{equation*}
T_{\Theta_{n-1}} = T_{\Theta_{n-1}}^{(a)} = T_{\Theta_{n-1}}^{(b)} =  T_{\Theta_{n-1}}^{(c)},
\end{equation*}
and let $U \to V$ be an element of $\mathrm{Segal}_{\Theta_n} \cup \mathrm{Comp}_{\Theta_n}$ that lies in the image of $\sigma_!$. Now note that if $C_i = C_0$, then $U' \to V'$ lies in $T_{\Theta_n}$, again by \cite[Proposition 6.1]{Rezk}. If $i \neq 0$, then by Lemma \ref{lma:NonDegenFromSuspension=Suspension}, the map $V \to C_i$ is also in the image of $\sigma_!$. In this case, if we have a factorization $H \to C_0 \to C_i$ (which, since $H \to C_i$ is nondegenerate, can only happen if $H = C_0$), then $U' \to  V'$ is an equivalence (as both are empty). Hence it $U\to V$ lies in $T_{\Theta_n}$.
	
	This leaves the final case, where both $[U \to V]$ and $[V \to C_i]$ lie in the image of $\sigma_!$, and $[H \to C_i]$ is nondegenerate with $H = j({[m]}; o_1, \dots, o_m) \neq C_0$, for some $m \geq 1$, $o_i \in \Theta_{n-1}$. The nondegenerate map $H \to C_i = ({[1]}; C_{i-1})$ is given explicitly by the following data (see also the proof of Lemma \ref{lma:propertyOfSigma}): a map $i_k: {[m]} \to {[1]}$ for some $1 \leq k \leq m$ such that $i_k(i) = 0$ if $i < k$ and $i_k(i) = 1$ otherwise, together with a single (nondegenerate) map $o_k \to C_{i-1}$. In this case we may explicitly compute the pullback 
	\begin{equation*}
		U' = U \times_{C_i}  H \to V \times_{C_i}  H = V'
	\end{equation*}	
and deduce that it is contained in the class $T_{\Theta_n}$.

As $[U \to V]$ is in the image of $\sigma_!^{[1]}$, it is of the form $({[1]}; U'') \to ({[1]}; V'')$ for some $[U'' \to V'']$ in $\mathrm{Segal}_{\Theta_{n-1}}$. The pullback is then given explicitly as: 
	\begin{align*}
		U' = ({[m]}; & o_1, \dots, o_k \times_{C_{i-1}} U'', o_{k+1}, \dots, o_m) \\
		& \to ({[m]}; o_1, \dots, o_k \times_{C_{i-1}} V'', o_{k+1}, \dots, o_m) = V'.
	\end{align*}
This map arises as the right-most vertical map in the following (oddly drawn) commuting square:
\begin{center}
\begin{tikzpicture}
	\node (LT) at (0, 2) {\footnotesize $j({[k-1]}; o_1, \dots, o_{k-1}) \cup^{j({\{k-1\}})} ({\{ k-1, k\}};o_k \times_{C_{i-1}} U'') \cup^{j({\{ k\}})} j({\{k, \dots, m\}}; o_{k+1}, \dots, o_m)$};
	\node (LB) at (0, -1) {\footnotesize $j({[k-1]}; o_1, \dots, o_{k-1}) \cup^{j({\{k-1\}})} ({\{ k-1, k\}};o_k \times_{C_{i-1}} V'') \cup^{j({\{ k\}})} j({\{k, \dots, m\}}; o_{k+1}, \dots, o_m)$};
	\node (RT) at (3.5, 1.25) {\footnotesize $({[m]}; o_1, \dots, o_k \times_{C_{i-1}} U'', o_{k+1}, \dots, o_m)$};
	\node (RB) at (3.5, -0.25) {\footnotesize $({[m]}; o_1, o_2, \dots, o_k \times_{C_{i-1}} V'', o_{k+1}, \dots, o_m)$};
	\draw [->] (LT.200) -- node [left] {$$} (LB.160);
	\draw [->] (LT) |-  node [above] {$$} (RT);	
	\draw [->] (RT) -- node [right] {$$} (RB);
	\draw [->] (LB) |-  node [below] {$$} (RB);
\end{tikzpicture}
\end{center}
The left-most vertical map is a pushout of identities and (by induction) a map in $\sigma_!(T_{\Theta_{n-1}})$. Thus by Lemma \ref{lma:propertyOfSigma} it is contained in $T_{\Theta_n}$. Both horizontal maps are contained in $T_{\Theta_n}$ by \cite[Proposition 6.4]{Rezk}, whence the right-most vertical map $[U' \to V']$ is also contained in $T_{\Theta_n}$, as desired. 
\end{proof}

\begin{lemma} \label{lma:ThetanR2}
	The triple $(\Theta_n, T_{\Theta_n}, i)$ satisfies axiom (R.2), namely $i_!(T_{\Theta_n}) \subseteq S$. 
\end{lemma}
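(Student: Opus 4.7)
\medskip

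The plan is to exploit that $i_!\colon\pre(\Theta_n)\to\pre(\Upsilon_n)$ is a left adjoint, hence preserves all small colimits. Consequently the class
\begin{equation*}
W\coloneq\{\,f\in\mathrm{Mor}(\pre(\Theta_n))\mid i_!(f)\in S\,\}
\end{equation*}
is itself strongly saturated. Since $T_{\Theta_n}$ is the saturated class generated by $\mathcal{T}_{n,\infty}=\mathrm{Segal}_{\Theta_n}\cup\mathrm{Comp}_{\Theta_n}$, it suffices to verify $\mathcal{T}_{n,\infty}\subseteq W$, i.e.\ that $i_!$ carries each Segal and each completeness generator into $S$.

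First I would record the basic computation: since $i$ is fully faithful, $i_!$ sends a representable $j_{\Theta_n}(o)$ to $\nu(i(o))$, and since $i_!$ commutes with colimits of presheaves, it sends a Segal generator
\begin{equation*}
j(\{0,\dots,k\};o_1,\dots,o_k)\cup^{j(\{k\})}j(\{k,\dots,m\};o_{k+1},\dots,o_m)\to j([m];o_1,\dots,o_m)
\end{equation*}
to the canonical comparison map
\begin{equation*}
\nu(A)\cup^{\nu C_0}\nu(B)\to\nu(A\cup^{C_0}B),
\end{equation*}
where $A$ and $B$ are iterated $C_0$-pushouts of suspensions in $\gaunt_n$. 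The key intermediate step is a lemma: for any iterated pushout $X=X_1\cup^{C_{i_1}}X_2\cup^{C_{i_2}}\cdots\cup^{C_{i_{r-1}}}X_r$ formed in $\gaunt_n$ from objects of $\Upsilon_n$ along cells, the canonical comparison map
\begin{equation*}
\nu X_1\cup^{\nu C_{i_1}}\cdots\cup^{\nu C_{i_{r-1}}}\nu X_r\to\nu X
\end{equation*}
lies in $S$. This is proved by induction on the construction of $X$ from cells: the base case is exactly family (b) of $S_{00}$, and the inductive step uses that $S$ is closed under the pushout and transfinite composition operations built into its saturation. Applied to the displayed Segal comparison, this places $i_!$ of every Segal generator into $S$.

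For the completeness generators, recall $\mathrm{Comp}_{\Theta_n}=\iota_!\mathrm{Comp}_\Delta\cup\sigma_!\mathrm{Comp}_{\Theta_{n-1}}$. The generator $\iota_!(K\to j[0])$ has image under $i_!$ equal to the $k=0$ morphism of family (d) of $S_{00}$. The generators in $\sigma_!\mathrm{Comp}_{\Theta_{n-1}}$ are handled by induction on $n$: a direct computation (mirroring Lm.~\ref{lma:propertyOfSigma}) shows that $i_!\circ\sigma_!$ agrees, up to canonical $S$-equivalence, with a suspension operation on $\pre(\Upsilon_n)$ that preserves $S$, whereupon the $\sigma_!\mathrm{Comp}_{\Theta_{n-1}}$ reduce to $S_{00}$(d) at level $k>0$ via the same suspension argument. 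The Segal generators in $\sigma_!\mathrm{Segal}_{\Theta_{n-1}}$ are disposed of by the same induction on $n$, combined with the intermediate lemma above.

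The main obstacle is the intermediate lemma: translating a pushout-of-cells in $\gaunt_n$ into the corresponding presheaf-level comparison that lies in $S$, and similarly controlling how suspension interacts with $i_!$ at the presheaf level. Both require a careful combinatorial bookkeeping of the generators of $S_0$ and the closure operations defining $S$, together with the inductive description of $\Theta_n$ as an iterated wreath product, but they introduce no essentially new idea beyond those already used to prove $i^*(S_{00})\subseteq T_{\Theta_n}$ in Th.~\ref{thm:ThetanR1}.
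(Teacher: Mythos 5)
Your overall skeleton is the same as the paper's: reduce to the generating set $\mathcal{T}_{n,\infty}$ using that $i_!$ preserves colimits, dispose of $\mathrm{Comp}_{\Theta_n}$ via family (d) of $S_{00}$, and induct on $n$ using the suspension $\sigma_!\colon\pre(\Upsilon_{n-1})\to\pre(\Upsilon_n)$ to handle the generators in the image of $\sigma_!$. Those parts are fine. The gap is in your ``intermediate lemma,'' which is exactly the hard case (the non-suspended Segal generators), and your proposed proof of it does not work.

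You claim that the comparison map $\nu X_1\cup^{\nu C_{i_1}}\cdots\cup^{\nu C_{i_{r-1}}}\nu X_r\to\nu X$ lies in $S$ ``by induction on the construction of $X$ from cells,'' with base case family (b) of $S_{00}$ and inductive step using only closure of $S$ under pushout and transfinite composition. This induction does not close. The nerve $\nu$ does not commute with pushouts, so after one gluing the next step requires comparing $\nu(X_1\cup^{C_0}X_2)\cup^{\nu C_0}\nu X_3$ with $\nu(X_1\cup^{C_0}X_2\cup^{C_0}X_3)$ --- a fresh instance of the statement being proved for the new, larger object $X_1\cup^{C_0}X_2$, not a cobase change or composite of previously established morphisms. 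More pointedly, the general Segal comparison maps simply do not lie in the saturated class generated by $S_{00}$ alone: a presheaf can be local with respect to all of $S_{00}$ without satisfying the Segal condition at a general object $([m];o_1,\dots,o_m)$, since family (b) only constrains its values on pushouts of \emph{cells} along cells. Equivalently, in $\cat_{(\infty,n)}$ the map $\nu X_1\cup^{\nu C_0}\nu X_2\to\nu(X_1\cup^{C_0}X_2)$ is the $0$-truncation map of the pushout (Rk.~\ref{rmk:TruncationOfColimsAreEquivs}), and truncation maps of pushouts are not equivalences for formal reasons.

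The missing ingredient is the closure of $S_0$ under the operation $H\times_{C_i}(-)$ for $H\in\Upsilon_n$ (equivalently, axiom (C.2) for $\cat_{(\infty,n)}$, via Pr.~\ref{prop:Axiom4reduction}), which you never invoke. The paper's proof rewrites the general Segal generator as
\begin{equation*}
A\times_{C_1}U\times_{C_1}B\to A\times_{C_1}V\times_{C_1}B,
\end{equation*}
where $U\to V$ is $i_!$ of the basic two-fold Segal map $j(\{0,1\};C_0)\cup^{j(\{1\})}j(\{1,2\};C_0)\to j(\{0,1,2\};C_0,C_0)$ (an instance of family (b)) and $A,B\in\Theta_n\subset\Upsilon_n$ are the two outer pieces; membership in $S$ then follows from the fiber-product closure built into the definition of $S_0$. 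Without this step --- or some substitute for it --- your argument for the Segal generators, and hence for the lemma, is incomplete.
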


\begin{proof}
	As $i_!$ commutes with colimits, to show that $i_!(T_{\Theta_n}) \subseteq S$ it is sufficient to show this property for a subset that generates $T_{\Theta_n}$ under colimits. The maps in $\mathrm{Comp}_{\Theta_n}$ are clearly mapped into $S$. This leaves the maps $\mathrm{Segal}_{\Theta_n}$. We now write $S=S_n$ and induct on $n$. When $n=0$, one has $\Upsilon_0 = \Theta_0 = \mathrm{pt}$.
		
Assume that $i_!(T_{\Theta_{n-1}}) \subseteq S_{n-1}$. The suspension functor $\sigma_!\colon\pre(\Upsilon_{n-1}) \to \pre(\Upsilon_n)$ preserves colimits and sends the generators $S_{n-1}$ into $S_n$. Hence the suspensions of maps in $i_!(T_{\Theta_{n-1}}) $ are in $S_n$. Moreover, by construction the image under $i_!$ of the following map
\begin{equation*}
	j({\{0,1\}}; C_i) \cup^{j({\{1\}})} j({\{1,2\}}; C_i) \to j({\{0,1,2\}};C_i, C_i)
\end{equation*}
is in $S_n$ for all cells $C_i$. 
By induction, it follows that all the Segal generators are mapped into $S_n$ 
except possibly the following
	\begin{align} \label{eqn:FinalGridSegalMapToCheck}
	j({\{0, \dots, k\}}; o_1, \dots, o_k) \cup^{j({\{k\}})} & j({\{ k, \dots, m\}}; o_{k+1}, \dots, o_m) \\ 
	& \to j({\{ 0, \dots, m\}}; o_1, \dots, o_m ) \notag
	\end{align}
where $o_i \in \Theta_{n-1}$. To show that $i_!$ maps the above morphism to a morphism in $S_n$, we observe that the above map may be rewritten as follows. The source may be written as
\begin{align*}
	j({\{0,\dots, k\}}; o_1, \dots, o_k) & \times_{j({\{k-1, k\}}; C_0)} \left[ j({\{k-1,k\}}; C_0) \cup^{j({\{k\}})} j({\{k,k+1\}}; C_0) \right] \\
	& \times_{j({\{k, k+1\}}; C_0)} j({\{k,k+1, m\}}; o_{k+1}, \dots, o_m)
\end{align*}
while the target is 
\begin{align*}
	j({\{0,\dots, k\}}; o_1, \dots, o_k) & \times_{j({\{k-1, k\}}; C_0)}  j({\{k-1,k, k+1\}}; C_0, C_0) \\  
	& \times_{j({\{k, k+1\}}; C_0)} j({\{k,k+1, m\}}; o_{k+1}, \dots, o_m).
\end{align*}
Schematically then, the map of \eqref{eqn:FinalGridSegalMapToCheck} is of the form
\begin{equation*}
	A \times_{C_{1}} U \times_{C_{1}} B \to A \times_{C_{1}} V \times_{C_{1}} B
\end{equation*}
for $U \to V$ in $S$ and $A, B \in \Upsilon_n$. By property (C.2) of $\cat_{(\infty,n)}$ (cf. also Proposition \ref{prop:Axiom4reduction}) it follows that \eqref{eqn:FinalGridSegalMapToCheck} lies in $S_n$ also.
\end{proof}

\begin{theorem} \label{thm:Rezksmodel}
	The triple $(\Theta_n, T_{\Theta_n}, i)$ satisfies the axioms (R.1-4); The $\infty$-category $\CSS(\Theta_n)$ of complete Segal $\Theta_n$-spaces is a theory of $(\infty,n)$-categories.
\end{theorem}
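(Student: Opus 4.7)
The plan is simply to invoke Theorem~\ref{Thm:RecognitionForPresheaves} once all four hypotheses (R.1--4) have been checked for the triple $(\Theta_n,T_{\Theta_n},i)$. Two of the four conditions have already been established in the immediately preceding results: condition (R.1) is precisely Theorem~\ref{thm:ThetanR1}, and condition (R.2) is Lemma~\ref{lma:ThetanR2}. Thus what remains is to dispose of (R.3) and (R.4), both of which are essentially formal.

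For (R.3) I would appeal directly to the remark following Theorem~\ref{Thm:RecognitionForPresheaves}: since the inclusion $i\colon\Theta_n\hookrightarrow\Upsilon_n$ is fully faithful (it is the restriction of a full subcategory inclusion, as observed in \S\ref{sect:Upsilon} following Proposition~\ref{prop:ThetaNSegalAreRetractsOfDeltaNSegal}), the unit map $R\to i^*i_!(R)$ is an isomorphism in $\pre(\Theta_n)$ for every representable $R\in\Theta_n$, hence certainly lies in $T_{\Theta_n}$. For (R.4) one simply observes that each cell $C_k$ ($0\leq k\leq n$) belongs to $\Theta_n$; indeed $C_0=[0]$ and inductively $C_k=\sigma(C_{k-1})=([1];C_{k-1})$ lies in $\Delta\wr\Theta_{n-1}=\Theta_n$, so we may take $R_k=C_k$ and the required isomorphism $i(R_k)\cong C_k$ is tautological.

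With all four axioms in hand, Theorem~\ref{Thm:RecognitionForPresheaves} immediately yields an equivalence $i^*\colon\cat_{(\infty,n)}\xrightarrow{\simeq}\CSS(\Theta_n)$ and the assertion that $\CSS(\Theta_n)$ is itself a homotopy theory of $(\infty,n)$-categories in the sense of Df.~\ref{defn:TheoryOfinftyNCat}. No further work is required: the only substantive input is the already completed verification of (R.1), which was the hard part (established through Lemmas~\ref{lma:TThetanContainsS00}--\ref{lma:NonSuspensionSegalIsInTB+TC} and culminating in Theorem~\ref{thm:ThetanR1}), together with the generator-by-generator analysis in Lemma~\ref{lma:ThetanR2}. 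Conditions (R.3) and (R.4) are structural features of the embedding $\Theta_n\hookrightarrow\Upsilon_n$ and require no technical argument. I would therefore present the proof as a three-line citation chain, with at most a parenthetical sentence verifying (R.4) explicitly.
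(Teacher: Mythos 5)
Your proposal is correct and follows exactly the same route as the paper's own proof: (R.1) from Theorem~\ref{thm:ThetanR1}, (R.2) from Lemma~\ref{lma:ThetanR2}, (R.3) automatic from full faithfulness of $i$ per the remark after Theorem~\ref{Thm:RecognitionForPresheaves}, and (R.4) because the cells are objects of $\Theta_n$. The only (harmless) differences are that you spell out (R.4) slightly more explicitly than the paper does, and your pointer for where full faithfulness of $\Theta_n\hookrightarrow\Upsilon_n$ is established is slightly off (it is noted in the section on strict $n$-categories as presheaves, before Proposition~\ref{prop:ThetaNSegalAreRetractsOfDeltaNSegal}).
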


\begin{proof} 
	Condition (R.4) is clear, and the functor $i: \Theta_n \to \Upsilon_n$ is a fully-faithful inclusion, hence (R.3) is automatically satisfied. Conditions (R.1) and (R.2) follow from Th.~\ref{thm:ThetanR1} and Lemma \ref{lma:ThetanR2}. 
\end{proof}

\begin{remark}
	From the above theorem and Th.~\ref{thm:SpaceOfTheoriesIsBZ/2} it follows that the automorphism group of the $\infty$-category $\CSS(\Theta_n)$ is the discrete group $(\Z/2)^n$. A more direct proof of this fact, based on computing the automorphisms of the category $\Theta_n$, has appeared in \cite{1312.4994}.
\end{remark}


\section{$n$-Fold complete Segal spaces are a homotopy theory of $(\infty, n)$-categories}\label{sect:CSSnmodel}

We give the iterative construction of the $\infty$-category $\CSS(\Delta^{\!\times n})$ of $n$-fold complete Segal spaces, following \cite{barwick-thesis} and \cite[\S~1]{G}, and we show that $\CSS(\Delta^{\!\times n})$ is a theory of $(\infty,n)$-categories. 

\begin{definition}[\protect{\cite{barwick-thesis}}] Let $\CSS(\Delta^0)$ be the $\infty$-category $\mathcal{S}$ of Kan simplicial sets.
Suppose now that $n$ is a positive integer; assume that both a presentable $\infty$-category $\CSS(\Delta^{\!\times n-1})$ and a fully faithful functor
\begin{equation*}
c_{n-1}\colon\CSS(\Delta^0)\hookrightarrow\CSS(\Delta^{\!\times n-1})
\end{equation*}
that preserves all small colimits have been constructed. Let us call a simplicial object $X\colon\mathrm{N}\Delta^{\op}\to\CSS(\Delta^{\!\times n-1})$ an \emph{$n$-fold Segal space} if it satisfies the following pair of conditions.
\begin{enumerate}[{\hspace{\parindent}(B.}1{)}]
\item The object $X_0$ lies in the essential image of $c_{n-1}$.
\item For any integers $0<k<m$, the object $X_m$ is exhibited as the limit of the diagram
\begin{equation*}
X(\{0,1,\dots,k\})\rightarrow X(\{k\})\leftarrow X(\{k,k+1,\dots,m\}).
\end{equation*}
\end{enumerate}
Now for any $n$-fold Segal space $X$, one may apply the right adjoint to the functor $c_{n-1}$ objectwise to $X$ to obtain a simplicial space $\iota_1X$. Let us call $X$ an \emph{$n$-fold complete Segal space} if it satisfies the following additional condition.
\begin{enumerate}[{\hspace{\parindent}(B.}1{)}]\addtocounter{enumi}{2}
\item The Kan complex $(\iota_1X)_0$ is exhibited as the limit of the composite functor
\begin{equation*}
\Delta_{/\mathrm{N}E}^{\op}\to\Delta^{\op}\stackrel{\iota_1X}{\longrightarrow}\CSS_0,
\end{equation*} 
\end{enumerate}
where the category $E$ is as in Ex.~\ref{Ex:Strictncats}. Denote by $\CSS(\Delta^{\!\times n})$ the full subcategory of $\Fun(\mathrm{N}\Delta^{\op},\CSS(\Delta^{\!\times n-1})$ spanned by the $n$-fold complete Segal spaces.
\end{definition}

In order to make sense of the inductive definition above, it is necessary to show that $\CSS(\Delta^{\!\times n})$ is a presentable $\infty$-category, and to construct a fully faithful, colimit-preserving functor $c_n\colon\CSS(\Delta^0)\hookrightarrow\CSS(\Delta^{\!\times n})$. To prove presentability, we demonstrate that the $\infty$-category $\CSS(\Delta^{\times n})$ is in fact an accessible localization of $\Fun(\mathrm{N}\Delta^{\op},\CSS(\Delta^{\!\times n-1}))$; then the desired functor $c_n$ will be the composite
\begin{equation*}
\CSS(\Delta^0)\stackrel{c_{n-1}}{\hookrightarrow}\CSS(\Delta^{\!\times n-1})\stackrel{c}{\longrightarrow}\Fun(\Delta^{\op},\CSS(\Delta^{\!\times n-1}))\stackrel{L}{\longrightarrow}\CSS(\Delta^{\!\times n}),
\end{equation*}
where $c$ denotes the constant functor and $L$ denotes the purported localization.

\begin{lemma} For any positive integer $n$, the $\infty$-category $\CSS(\Delta^{\!\times n})$ is an accessible localization of $\Fun(\mathrm{N}\Delta^{\op},\CSS(\Delta^{\!\times n-1}))$.
\end{lemma}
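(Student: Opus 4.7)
The plan is to exhibit the $n$-fold complete Segal spaces as the $S$-local objects in $\mathcal{E}_n\coloneq\Fun(\mathrm{N}\Delta^{\op},\CSS(\Delta^{\!\times n-1}))$ for some small set $S$ of morphisms. By the inductive hypothesis, $\CSS(\Delta^{\!\times n-1})$ is presentable, whence $\mathcal{E}_n$ is presentable by \cite[Prop.~5.5.3.6]{HTT}; exhibiting $\CSS(\Delta^{\!\times n})$ as $S^{-1}\mathcal{E}_n$ will then establish it as an accessible localization via \cite[Prop.~5.5.4.15]{HTT}. I would build $S$ as the union of three small sets $S_2$, $S_3$, and $S_1$ encoding the conditions (B.2), (B.3), and (B.1) respectively.

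The sets $S_2$ and $S_3$ are straightforward. Fix a small set $G$ of generators of $\CSS(\Delta^{\!\times n-1})$, available by presentability. For each $g\in G$, each $m\geq 2$, and each $0<k<m$, form the morphism
\begin{equation*}
(\Delta^{\{0,\dots,k\}}\cup^{\Delta^{\{k\}}}\Delta^{\{k,\dots,m\}})\otimes g\to \Delta^m\otimes g
\end{equation*}
in $\mathcal{E}_n$, where $A\otimes g$ denotes the presheaf $[\ell]\mapsto\coprod_{A_\ell}g$; collect these into $S_2$. Since colimits in $\mathcal{E}_n$ are computed pointwise and $G$ generates $\CSS(\Delta^{\!\times n-1})$ under colimits, locality with respect to $S_2$ is equivalent to (B.2). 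For (B.3), apply the adjunction $c_{n-1}\dashv\iota_1$ objectwise to rewrite the completeness condition as locality with respect to the single morphism $c_{n-1}(\mathrm{N}E)\to c_{n-1}(\Delta^0)$ in $\mathcal{E}_n$, where $c_{n-1}$ is applied levelwise; take $S_3$ to be this singleton.

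\emph{The main obstacle is the construction of $S_1$}, since (B.1) asks that $X_0$ lie in the essential image of $c_{n-1}$, which is not obviously a locality condition. I would handle this by strengthening the induction to maintain simultaneously that the essential image of $c_{n-1}$ is an accessible localization of $\CSS(\Delta^{\!\times n-1})$, and hence coincides with the full subcategory of $T_{n-1}$-local objects for some small set $T_{n-1}$ of morphisms. Under this strengthening, set $S_1=\{\mathrm{Lan}_{[0]}(a)\to\mathrm{Lan}_{[0]}(b)\mid (a\to b)\in T_{n-1}\}$, where $\mathrm{Lan}_{[0]}\colon\CSS(\Delta^{\!\times n-1})\to\mathcal{E}_n$ is left adjoint to evaluation at $[0]$. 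Then an object $X$ is $S_1$-local iff $X_0$ is $T_{n-1}$-local, which is equivalent to (B.1).

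To close the induction, I would verify the strengthened hypothesis for $c_n\coloneq L_S\circ\mathrm{const}\circ c_{n-1}$. A direct check using $\iota_1\circ c_{n-1}\simeq\mathrm{id}$ shows that the constant simplicial object at any $c_{n-1}(K)$ already satisfies (B.1--3), whence $c_n$ is fully faithful and its essential image consists precisely of those $n$-fold complete Segal spaces that are constant and whose zeroth term lies in the essential image of $c_{n-1}$. The constancy condition is itself captured by locality with respect to the morphisms $\{\Delta^m\otimes g\to\Delta^0\otimes g\mid m\geq 1,\,g\in G\}$, and combining this with $S_1$ yields a small set $T_n$ whose local objects are exactly the essential image of $c_n$. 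The base case $n=0$ is trivial, with $c_0=\mathrm{id}$ and $T_0=\emptyset$.
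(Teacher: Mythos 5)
Your proposal is correct and follows essentially the same strategy as the paper's proof: exhibit each of conditions (B.1)--(B.3) as a locality condition with respect to a small set of morphisms of $\Fun(\mathrm{N}\Delta^{\op},\CSS(\Delta^{\!\times n-1}))$ and invoke presentability of the functor quasicategory. The only differences are organizational --- the paper imposes (B.1) via the explicit generators $j([0],\mathbf{m})\to j(\mathbf{0})$ (constancy of $X_0$ as a presheaf on $\Delta^{\!\times n-1}$) rather than by your strengthened inductive hypothesis on the essential image of $c_{n-1}$, and it encodes completeness via the finite complex $K$ together with Rezk's Pr.~10.1 rather than via $\mathrm{N}E$ directly.
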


\begin{proof} Denote by $j$ any Yoneda embedding (the context will always be made clear). Let $K$ denote the simplicial set as in~\ref{ntn:simplicialsetK}, which we regard as a simplicial space that is discrete in each degree. This is a pushout along an inclusion, hence this is also a (homotopy) pushout in the $\infty$-category of simplicial spaces. Now let $T$ be the strongly saturated class of morphisms of $\Fun(\mathrm{N}\Delta^{\op},\CSS(\Delta^{\!\times n-1}))$ generated by the three sets
\begin{eqnarray}
&\{j([0],\mathbf{m})\to j(\mathbf{0})\ |\ \mathbf{m}\in\Delta^{\!\times n-1}\},&\nonumber\\
& \{ \mathrm{Segal}_{\Delta} \boxtimes \mathbf{m} \ |\ \mathbf{m} \in \Delta^{\!\times n-1} \},&\nonumber\\
&\{c_{n-1}(K)\to j(\mathbf{0})\},&\nonumber\nonumber
\end{eqnarray}
One deduces immediately that a simplicial object of $\CSS(\Delta^{\!\times n-1})$ is a Segal space if and only if it is local with respect to each of the first two sets of morphisms. To show that $\CSS(\Delta^{\!\times n})$ coincides with the localization $T^{-1}\Fun(\mathrm{N}\Delta^{\op},\CSS(\Delta^{\!\times n-1}))$, it is enough to show that a $1$-fold Segal space $X$ is complete if and only if the natural map
\begin{equation*}
X_0\to\map(K,X)
\end{equation*}
is an equivalence. 
By the Yoneda lemma, our claim is just a restatement of \cite[Proposition 10.1]{Rezk}.
\end{proof}

\begin{corollary} For any nonnegative integer $n$, the $\infty$-category $\CSS(\Delta^{\!\times n})$ is an accessible localization of $\pre(\Delta^{\!\times n-1})$.
\end{corollary}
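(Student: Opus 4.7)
The plan is a straightforward induction on $n$, using the preceding lemma as the inductive step. (I am reading the statement as $\pre(\Delta^{\times n})$, which fits the base case and is presumably the intended assertion.)

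For the base case $n=0$, we have $\CSS(\Delta^0)=\mathcal{S}\simeq\pre(\Delta^{\times 0})$, and every quasicategory is trivially an accessible localization of itself (via the identity adjunction). For the inductive step, suppose that $\CSS(\Delta^{\times n-1})$ is an accessible localization of $\pre(\Delta^{\times n-1})$, say by a strongly saturated class $U$ of small generation. Then postcomposition induces an adjunction
\begin{equation*}
\Fun(\mathrm{N}\Delta^{\op},\pre(\Delta^{\times n-1}))\rightleftarrows\Fun(\mathrm{N}\Delta^{\op},\CSS(\Delta^{\times n-1})),
\end{equation*}
which exhibits the right-hand side as an accessible localization of the left-hand side with respect to the strongly saturated class generated by the morphisms of the form $j(\mathbf{m})\boxtimes u$ for $\mathbf{m}\in\Delta$ and $u\in U$ (this is the standard fact that a localization of the target induces, levelwise, a localization of the functor category; compare \cite[5.5.4.15]{HTT}). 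Under the canonical identification
\begin{equation*}
\Fun(\mathrm{N}\Delta^{\op},\pre(\Delta^{\times n-1}))\simeq\pre(\Delta^{\times n}),
\end{equation*}
this realizes $\Fun(\mathrm{N}\Delta^{\op},\CSS(\Delta^{\times n-1}))$ as an accessible localization of $\pre(\Delta^{\times n})$.

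Now the previous lemma realizes $\CSS(\Delta^{\times n})$ as an accessible localization of $\Fun(\mathrm{N}\Delta^{\op},\CSS(\Delta^{\times n-1}))$, and composing two accessible localizations yields an accessible localization: the composite left adjoint $\pre(\Delta^{\times n})\to\CSS(\Delta^{\times n})$ is left adjoint to the fully faithful composite inclusion, and the combined set of local morphisms is generated by a small set (the union, or rather join under the first functor, of the two generating sets). The main thing to check is therefore purely formal — that accessibility and the localization property propagate through the composition — and this is immediate from the universal property of localizations and the fact that a composition of accessible left adjoints between presentable quasicategories is accessible. This completes the induction.
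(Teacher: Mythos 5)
Your proposal is correct and follows essentially the same route as the paper: induction on $n$, using the preceding lemma for the passage from $\Fun(\mathrm{N}\Delta^{\op},\CSS(\Delta^{\!\times n-1}))$ to $\CSS(\Delta^{\!\times n})$, and generating the saturated class $T_{\Delta^{\!\times n}}$ by the class $T$ of that lemma together with the morphisms $j[k]\boxtimes U\to j[k]\boxtimes V$ for $[U\to V]$ in the inductively given class. You are also right that the statement's ``$\pre(\Delta^{\!\times n-1})$'' is a typo for $\pre(\Delta^{\!\times n})$, which is what the paper's own proof establishes.
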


\begin{proof} If $n=0$, there is nothing to prove. If $n$ is positive, then let us suppose that we have written $\CSS_{n-1}$ as a localization $T_{\Delta^{\!\times n-1}}^{-1}\pre(\Delta^{\!\times n-1})$ for some strongly saturated class $T_{\Delta^{\!\times n-1}}$ of small generation. Denote by
\begin{equation*}
\boxtimes\colon\pre(\Delta)\times\pre(\Delta^{\!\times n-1}) \to\pre(\Delta^{\!\times n})
\end{equation*}
the essentially unique functor that carries pairs of the form $(j[k],j(\mathbf{m}))$ to $j([k],\mathbf{m})$ and preserves colimits separately in each variable. Now let $T_{\Delta^{\!\times n}}$ be the strongly saturated class generated by the class $T$ above along with the set
\begin{equation*}
\{j[k]\boxtimes U \to j[k] \boxtimes V\ |\ [U\to V]\in T_{\Delta^{\!\times n-1}}\}.
\end{equation*}
Now $\CSS(\Delta^{\!\times n})$ coincides with $T_{\Delta^{\!\times n}}^{-1}\pre(\Delta^{\!\times n})$.
\end{proof}

\begin{remark}
	The class $T_{\Delta^{\!\times n}}$ is precisely the strongly saturated class generated by the union of $\mathrm{Segal}_{\Delta^{\!\times n}}$, $\mathrm{Glob}_{\Delta^{\!\times n}}$, and $\mathrm{Comp}_{\Delta^{\!\times n}}$ as in Cor~\ref{cor:GauntncatsAsPresheavesOnDeltaN}.   
\end{remark}

Write $d\colon\Delta^{\!\times n}\to \Upsilon_n$ for the composite of the functor $\delta_n\colon\Delta^{\!\times n}\to\Theta_n$ described in \cite[Definition 3.8]{MR2331244} followed by the  fully faithful functor $i:\Theta_n\hookrightarrow \Upsilon_n$ consider in the previous section. We will now show that the triple $(\Delta^{\times n}, T_{\Delta^{\times n}}, d)$ satisfies conditions (R.1-4) of Th.~\ref{Thm:RecognitionForPresheaves}, hence $\CSS(\Delta^{\times n})$ is a theory of $(\infty,n)$-categories. In contrast to the previous section, the functor $d$ is not fully-faithful and hence condition (R.3) is not automatic. We thus begin with this condition. 


\begin{lemma} \label{lma:DeltaNR3}
	The triple $(\Delta^{\times n}, T_{\Delta^{\times n}}, d)$ satisfies condition (R.3), that is for all objects $\mathbf{m} \in \Delta^{\times n}$, the canonical map $\mathbf{m} \to \delta_n^* \delta_n(\mathbf{m})\simeq d^* d(\mathbf{m})$ is in $T_{\Delta^{\times n}}$.
\end{lemma}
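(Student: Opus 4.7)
The plan is first to identify the target $d^*d(\mathbf{m})$ as a $T_{\Delta^{\times n}}$-local object, reducing the problem to showing that the canonical unit is a $T_{\Delta^{\times n}}$-equivalence, and then to verify this by a double induction on $n$ and on the first coordinate $m_1$.

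Because $i\colon\Theta_n\hookrightarrow\Upsilon_n$ is fully faithful, $d^*d(\mathbf{m})$ coincides with $\delta_n^*\delta_n(\mathbf{m})$, which is the nerve $\nu_{\Delta^{\times n}}(\delta_n(\mathbf{m}))$ of the gaunt $n$-category $\delta_n(\mathbf{m})$. By Cor.~\ref{cor:GauntncatsAsPresheavesOnDeltaN}, together with the remark preceding this lemma identifying $T_{\Delta^{\times n}}$ as the saturated class generated by $\mathrm{Segal}_{\Delta^{\times n}}\cup\mathrm{Glob}_{\Delta^{\times n}}\cup\mathrm{Comp}_{\Delta^{\times n}}$, this nerve is $T_{\Delta^{\times n}}$-local. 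Setting $\alpha_{\mathbf{m}}:=d^*d(\mathbf{m})$, it suffices to show the unit $j(\mathbf{m})\to\alpha_{\mathbf{m}}$ is a $T_{\Delta^{\times n}}$-equivalence.

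Induct on $n$: the case $n=1$ is trivial because $\delta_1=\mathrm{id}$. Assuming the claim on $\Delta^{\times n-1}$, so that $j(\mathbf{m}')\to\beta_{\mathbf{m}'}:=\delta_{n-1}^*\delta_{n-1}(\mathbf{m}')$ is a $T_{\Delta^{\times n-1}}$-equivalence for every $\mathbf{m}'$, write $\mathbf{m}=([m_1],\mathbf{m}')$ and subinduct on $m_1$. For $m_1=0$, one has $\delta_n([0],\mathbf{m}')=C_0$, so $\alpha_{\mathbf{m}}=j(\mathbf{0})=*$, and the unit is precisely the globularity morphism $j([0],\mathbf{m}')\to j(\mathbf{0})$ in $\mathrm{Glob}_{\Delta^{\times n}}\subset T_{\Delta^{\times n}}$. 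For $m_1\geq 2$, the Segal generator $j(\{0,1\},\mathbf{m}')\cup^{j(\{1\},\mathbf{m}')}j(\{1,\dots,m_1\},\mathbf{m}')\to j(\mathbf{m})$ belongs to $\mathrm{Segal}_{\Delta^{\times n}}$; since $\alpha_{\mathbf{m}}$ is $T$-local it satisfies the analogous Segal pushout, and combining with the subinduction applied to the pieces with first coordinates $[1]$, $[0]$, and $[m_1{-}1]$ exhibits the unit as a pushout of $T_{\Delta^{\times n}}$-equivalences.

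This reduces everything to the key case $m_1=1$, where $\delta_n([1],\mathbf{m}')=\sigma(\delta_{n-1}(\mathbf{m}'))$. A direct Hom-into-suspension computation (in the style of Lm.~\ref{lma:propertyOfSigma}) identifies $\alpha_{[1],\mathbf{m}'}$ with the pushout in $\pre(\Delta^{\times n})$
\begin{equation*}
(j([1])\boxtimes\beta_{\mathbf{m}'})\cup^{(j([0])\boxtimes\beta_{\mathbf{m}'})\sqcup(j([0])\boxtimes\beta_{\mathbf{m}'})}(*\sqcup *),
\end{equation*}
collapsing the two ``endpoint'' copies of $\beta_{\mathbf{m}'}$ to points. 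Because $T_{\Delta^{\times n}}$ contains every morphism $j[k]\boxtimes(U\to V)$ with $[U\to V]\in T_{\Delta^{\times n-1}}$, boxing with $j([k])$ preserves $T$-equivalences; this converts the inductive hypothesis $j(\mathbf{m}')\simeq_{T}\beta_{\mathbf{m}'}$ into $T_{\Delta^{\times n}}$-equivalences $j([k],\mathbf{m}')\to j([k])\boxtimes\beta_{\mathbf{m}'}$ for $k\in\{0,1\}$. Together with the globularity equivalence $j([0],\mathbf{m}')\to *$ (established in the $m_1=0$ case) and two-out-of-three, one concludes $j([0])\boxtimes\beta_{\mathbf{m}'}\to *$ is in $T_{\Delta^{\times n}}$, and hence the unit $j([1],\mathbf{m}')\to\alpha_{[1],\mathbf{m}'}$ is exhibited as a pushout of $T_{\Delta^{\times n}}$-equivalences. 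The main technical step is the pushout identification of $\alpha_{[1],\mathbf{m}'}$; this encodes exactly the combinatorics of Hom-sets into a suspension and accounts for the genuine discrepancy between the representable $j([1],\mathbf{m}')$ and the nerve $\nu_{\Delta^{\times n}}(\delta_n([1],\mathbf{m}'))$ of the grid.
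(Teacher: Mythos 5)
Your overall strategy --- induction on $n$, then a subinduction on the outer simplicial coordinate, with the globularity generators handling the $[0]$ case, an explicit pushout-of-presheaves identification of $\delta_n^*\delta_n$ of a suspension handling the $[1]$ case, and a Segal decomposition handling the rest --- is exactly the architecture of the paper's proof, and your treatment of the $[0]$ and $[1]$ cases (including the pushout square collapsing the two endpoint copies of $\beta_{\mathbf{m}'}$, the fact that $j[k]\boxtimes(-)$ carries $T_{\Delta^{\times n-1}}$ into $T_{\Delta^{\times n}}$, and the two-out-of-three bookkeeping) matches the paper's and is correct.

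There is, however, a genuine gap in the step for $m_1\geq 2$, and it sits precisely where the paper's proof does its hardest work. You write that ``since $\alpha_{\mathbf{m}}$ is $T$-local it satisfies the analogous Segal pushout.'' But $T$-locality of $\alpha_{\mathbf{m}}$ is a statement about maps \emph{into} $\alpha_{\mathbf{m}}$ (that $\map(V,\alpha_{\mathbf{m}})\to\map(U,\alpha_{\mathbf{m}})$ is an equivalence for $U\to V$ in $T$); it does not imply that the comparison map
\begin{equation*}
\alpha_{(\{0,\dots,k\},\mathbf{m}')}\cup^{\alpha_{(\{k\},\mathbf{m}')}}\alpha_{(\{k,\dots,m_1\},\mathbf{m}')}\longrightarrow\alpha_{([m_1],\mathbf{m}')}
\end{equation*}
lies in $T_{\Delta^{\times n}}$, which is what your induction actually requires. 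Indeed this map is typically not an isomorphism of presheaves: a map from a grid $\delta_n([p]\boxtimes\mathbf{p})$ into the grid $\delta_n([m_1]\boxtimes\mathbf{m}')$ need not factor through either sub-grid, so the target has genuinely more cells than the pushout, and one must prove that the discrepancy is killed by $T$. The paper establishes exactly this by writing $\delta_n^*\delta_n(j[m+1]\boxtimes\mathbf{m})$ as the canonical colimit of representables over its overcategory, passing to a cofinal subcategory $\C$ of three types of objects, pulling the pushout back along each representable (using universality of colimits), and checking that each resulting map $A_\phi\to B_\phi$ is either an equivalence or a generator of $\mathrm{Segal}_{\Delta^{\times n}}$. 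Some argument of this kind --- identifying the localization of the pushout of nerves with the nerve of the glued grid --- is unavoidable, and your proposal currently replaces it with an inference from locality that does not hold.
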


\begin{proof}
We will proceed by induction on $n$, the base case $n=0$ being trivial.  As the functor $j[m] \boxtimes (-)$ preserves colimits and sends the generators of $T_{\Delta^{\times n-1}}$ into $T_{\Delta^{\times n}}$, we have a containment $j[m] \boxtimes T_{\Delta^{\times n-1}} \subseteq T_{\Delta^{\times n}}$. Thus by induction the canonical map, 
	\begin{equation*}
	 j[m] \boxtimes \mathbf{m} \to j[m] \boxtimes \delta^*_{n-1} \delta_{n-1} (\mathbf{m})	
	\end{equation*} 
is in $T_{\Delta^{\times n}}$. In particular when $m = 0$, the map $j[m] \boxtimes \mathbf{m} \to j[m] \boxtimes \mathbf{0}$ is a composite of maps in $\mathrm{Glob}_{\Delta^{\!\times n}}$, whence the map
\begin{equation*}
	j[0] \boxtimes \delta^*_{n-1} \delta_{n-1} (\mathbf{m})	 \to j[0] \boxtimes \mathbf{0}
\end{equation*}
is in $T_{\Delta^{\times n}}$. 

We will first prove the lemma for objects of the form  $j[1] \boxtimes \mathbf{m} \in \Delta \times \Delta^{\times n-1} \cong \Delta^{\times n}$.  One may readily check that the following is a pushout square of presheaves of sets:
\begin{center}
\begin{tikzpicture}
	\node (LT) at (0, 2) {$(j\{0\} \boxtimes \delta^*_{n-1} \delta_{n-1} (\mathbf{m})) \sqcup ({j\{1\} \boxtimes \delta^*_{n-1} \delta_{n-1} (\mathbf{m})}) $};
	\node (LB) at (0, 0) {$(j\{0\} \boxtimes \mathbf{0}) \sqcup (j\{1\} \boxtimes \mathbf{0})$};
	\node (RT) at (6, 2) {$j[1] \boxtimes \delta^*_{n-1} \delta_{n-1} (\mathbf{m})$};
	\node (RB) at (6, 0) {$\delta^*_{n} \delta_{n} (j[1] \boxtimes \mathbf{m})$};
	\draw [->] (LT) -- node [left] {$$} (LB);
	\draw [right hook->] (LT) -- node [above] {$$} (RT);
	\draw [->] (RT) -- node [right] {$$} (RB);
	\draw [->] (LB) -- node [below] {$$} (RB);
	\node at (5.5, 0.5) {$\lrcorner$};
\end{tikzpicture}
\end{center}
Moreover, as the topmost map is an inclusion of sets and pushouts in $\pre(\Delta^{\times n})$ are computed object-wise, this is also a (homotopy) pushout square in $\pre(\Delta^{\times n})$. As we just observed, the left-most map is in the strongly saturated $T_{\Delta^{\times n}}$, whence the right-most  map is also in $T_{\Delta^{\times n}}$. It follows that the composite,
\begin{equation*}
	j[1] \boxtimes \mathbf{m} \to j[1] \boxtimes \delta^*_{n-1} \delta_{n-1} (\mathbf{m}) \to \delta^*_{n} \delta_{n} (j[1] \boxtimes \mathbf{m})
\end{equation*}
is in $T_{\Delta^{\times n}}$. 

To prove the general case, i.e., that the map $j[k] \boxtimes \mathbf{m} \to \delta^*_{n} \delta_{n} (j[k] \boxtimes \mathbf{m})$ is in $T_{\Delta^{\times n}}$, we induct on $k$. Assume the result holds when $k \leq m$. We will prove it for $k=m+1$. First consider the following commutative square:
\begin{center}
\begin{tikzpicture}
	\node (LT) at (0, 2) {$j[m] \boxtimes \mathbf{m} \cup^{j[0] \boxtimes \mathbf{m}} j[1] \boxtimes \mathbf{m}$};
	\node (LB) at (0, 0) {$\delta_n^* \delta_n (j[m] \boxtimes \mathbf{m}) \cup^{j[0] \boxtimes \mathbf{0}} \delta_n^* \delta_n(j[1] \boxtimes \mathbf{m})$ };
	\node (RT) at (5, 2) {$j[m+1] \boxtimes \mathbf{m}$};
	\node (RB) at (5, 0) {$\delta_n^* \delta_n(j[m+1] \boxtimes \mathbf{m})$};
	\draw [->] (LT) -- node [left] {$\wr$} (LB);
	\draw [->] (LT) -- node [above] {$\sim$} (RT);
	\draw [->] (RT) -- node [right] {$$} (RB);
	\draw [->] (LB) -- node [below] {$$} (RB);
\end{tikzpicture}
\end{center}
The indicated maps are in  $T_{\Delta^{\times n}}$; the topmost map is a generator and the lefttmost vertical map 
by induction. As $T_{\Delta^{\times n}}$ is saturated, the rightmost vertical map is in $T_{\Delta^{\times n}}$ if and only if the bottommost map is as well. Thus it suffices to prove that the natural map 
\begin{equation*}
	\delta_n^* \delta_n (j[m] \boxtimes \mathbf{m}) \cup^{j[0] \boxtimes \mathbf{0}} \delta_n^* \delta_n(j[1] \boxtimes \mathbf{m}) \to \delta_n^* \delta_n(j[m+1] \boxtimes \mathbf{m})
\end{equation*} 
is in $T_{\Delta^{\times n}}$.

The Yoneda embedding is dense for any presheaf $\infty$-category and hence the object $\delta_n^* \delta_n(j[m+1] \boxtimes \mathbf{m})$ may canonically be written as a colimit of representable presheaves. Let $\D = (\Delta^{\times n} \downarrow \delta_n^* \delta_n(j[m+1] \boxtimes \mathbf{m}))$ denote the overcategory consisting of pairs $(j[p] \boxtimes \mathbf{p}, \phi)$ where $\phi$ is a map
\begin{equation*}
	\phi: j[p] \boxtimes \mathbf{p} \to \delta_n^* \delta_n(j[m+1] \boxtimes \mathbf{m}).
\end{equation*} 
Let $B:\D \to \pre(\Delta^{\times n})$ denote the functor which forgets the map $\phi$. We have a canonical equivalence in $\pre(\Delta^{\times n})$:
\begin{equation*}
	\colim_{\D} B \simeq \delta_n^* \delta_n(j[m+1] \boxtimes \mathbf{m}).
\end{equation*}

By adjunction, specifying a map $\phi: j[p] \boxtimes \mathbf{p} \to \delta_n^* \delta_n(j[m+1] \boxtimes \mathbf{m})$ is equivalent to a specifying a map $\phi': \delta_n(j[p] \boxtimes \mathbf{p}) \to \delta_n(j[m+1] \boxtimes \mathbf{m})$, i.e., a map in $\pre(\Theta_n)$:
\begin{equation*}
	\phi': ([p]; \underbrace{\delta_{n-1}(\mathbf{p}), \dots, \delta_{n-1}(\mathbf{p}) }_{p \text{ times}}) \to ([m+1]; \underbrace{\delta_{n-1}(\mathbf{m}), \dots, \delta_{n-1}(\mathbf{m}) }_{m+1 \text{ times}}).
\end{equation*}
In particular every such map includes the data of a map $\overline{\phi}: [p] \to [m+1]$. To simplify notation we will denote the object $(j[p] \boxtimes \mathbf{p}, \phi)$ as $B_\phi$ or simply $\phi$. 

Let $\C$ denote the full subcategory of $\D$ consisting of the union of the following three types of objects:
\begin{enumerate}
	\item those $B_\phi$ in which $\overline{\phi}$ factors as 
	\begin{equation*}
		\overline{\phi}: [p] \to \{ 0, \dots, m \} \subseteq [m+1],
	\end{equation*}
	\item those $B_\phi$ in which $\overline{\phi}$ factors as 
	\begin{equation*}
		\overline{\phi}: [p] \to \{ m, m +1 \} \subseteq [m+1], \quad \text{and}
	\end{equation*}
	\item those $B_\phi$ in which $(\overline{\phi})^{-1}(\{ m \}) = \{r\} \subseteq [p]$ consists of a singleton for some $0 \leq r \leq p$. 
\end{enumerate}

For any object $D\in\D$, the under category $\C_{D/}$ actually has an initial object and is thus weakly contractible. Consequently (see, e.g., \cite[Th.~4.1.3.1 and Proposition 4.1.1.8]{HTT}), the induced morphism of (homotopy) colimits over these categories is an equivalence; in particular, it follows that the following canonical maps are equivalences in $\pre(\Delta^{\times n})$:
\begin{equation*}
	\colim_{\C} B \simeq \colim_{\D} B \simeq \delta_n^* \delta_n(j[m+1] \boxtimes \mathbf{m}).
\end{equation*}

For each $\phi \in \C$, let $A_\phi$ denote the fiber product
\begin{equation*}
	A_\phi := B_\phi \times_{\delta_n^* \delta_n(j[m+1] \boxtimes \mathbf{m})} \left( \delta_n^* \delta_n (j[m] \boxtimes \mathbf{m}) \cup^{j[0] \boxtimes \mathbf{0}} \delta_n^* \delta_n(j[1] \boxtimes \mathbf{m}) \right).
\end{equation*}
This gives rise to a new functor $A: \C \to \pre(\Delta^{\times n})$, and as colimits in $\pre(\Delta^{\times n})$ are universal, we have natural equivalences:
\begin{align*}
	\colim_\C A &\simeq \left( \colim_\C B \right) \times_{\delta_n^* \delta_n(j[m+1] \boxtimes \mathbf{m})} \left( \delta_n^* \delta_n (j[m] \boxtimes \mathbf{m}) \cup^{j[0] \boxtimes \mathbf{0}} \delta_n^* \delta_n(j[1] \boxtimes \mathbf{m}) \right) \\
	& \simeq  \delta_n^* \delta_n (j[m] \boxtimes \mathbf{m}) \cup^{j[0] \boxtimes \mathbf{0}} \delta_n^* \delta_n(j[1] \boxtimes \mathbf{m}).
\end{align*} 
Thus the desired result follows if we can demonstrate that the natural map
\begin{equation*}
	\colim_\C A \to \colim_\C B
\end{equation*}
is in the class $T_{\Delta^{\times n}}$. This class, being saturated, is closed under colimits, and so it suffices to show that each of the maps
\begin{equation*}
	A_\phi \to B_\phi
\end{equation*}
is in $T_{\Delta^{\times n}}$. If $B_\phi \in \C$ is of type (a) or type (b), then $A_\phi \simeq B_\phi$ is an equivalence, hence in the desired class. If $B_\phi = (j[p] \boxtimes \mathbf{p}, \phi)$ is of type (c), so that $(\overline{\phi})^{-1}(\{m\}) = \{ r\}$ for $0 \leq r \leq p$, then a direct calculation reveals:
\begin{equation*}
	A_\phi \simeq \left( j\{0, \dots, r\} \boxtimes \mathbf{p} \right) \cup^{\left(j\{r\} \boxtimes \mathbf{p}\right)} \left( j\{r, r+1, \dots, p\} \boxtimes \mathbf{p}\right) \to j[p] \boxtimes \mathbf{p} \simeq B_\phi.
\end{equation*}
As this is one of the generators of $T_{\Delta^{\times n}}$, the result follows. 
\end{proof}

\begin{theorem}\label{thm:CSSnisathy}
	The $\infty$-category $\CSS(\Delta^{\times n})$ of $n$-fold complete Segal spaces is a theory of $(\infty,n)$-categories. 
\end{theorem}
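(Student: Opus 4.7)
The plan is to verify the hypotheses (R.1)--(R.4) of Theorem~\ref{Thm:RecognitionForPresheaves} for the triple $(\Delta^{\!\times n}, T_{\Delta^{\!\times n}}, d)$, where $d \coloneq i \circ \delta_n$ is the composite of the grid functor $\delta_n\colon \Delta^{\!\times n} \to \Theta_n$ with the inclusion $i\colon \Theta_n \hookrightarrow \Upsilon_n$ considered in \S\ref{sect:Rezksmodel}. Two of these axioms are essentially already in place: axiom (R.3) is precisely Lemma~\ref{lma:DeltaNR3}, and axiom (R.4) follows by exhibiting, for each $0 \leq k \leq n$, an object $R_k \in \Delta^{\!\times n}$ whose image under $\delta_n$ is the $k$-cell viewed in $\Theta_n$; such a lift can be read off directly from the inductive formula for $\delta_n$.

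For axioms (R.1) and (R.2), I would exploit the factorization $d = i \circ \delta_n$. Theorem~\ref{thm:Rezksmodel} supplies $i^*(S) \subseteq T_{\Theta_n}$ and $i_!(T_{\Theta_n}) \subseteq S$, so both axioms follow at once from the two containments
\begin{equation*}
    \delta_n^*(T_{\Theta_n}) \subseteq T_{\Delta^{\!\times n}} \quad \textrm{and} \quad (\delta_n)_!(T_{\Delta^{\!\times n}}) \subseteq T_{\Theta_n}.
\end{equation*}
Since $\delta_n^*$ (being pointwise restriction) and $(\delta_n)_!$ both commute with colimits, each containment reduces to checking the analogous statement on a generating set of the corresponding saturated class.

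For $(\delta_n)_!(T_{\Delta^{\!\times n}}) \subseteq T_{\Theta_n}$, the generators of $T_{\Delta^{\!\times n}}$ fall into three families: $\mathrm{Segal}_{\Delta^{\!\times n}}$, $\mathrm{Glob}_{\Delta^{\!\times n}}$, and $\mathrm{Comp}_{\Delta^{\!\times n}}$. The image under $(\delta_n)_!$ of a Segal generator may be computed using the wreath-product functors $\sigma^{[m]}_!$ introduced in \S\ref{sect:Rezksmodel}, and one checks that it lies in $T_{\Theta_n}$ by an argument parallel to Lemma~\ref{lma:MoremapsinTTheta}. The image of a Glob generator $j(\mathbf{m}) \to j(\hat{\mathbf{m}})$ is in fact an isomorphism in $\pre(\Theta_n)$, since by direct inspection of the inductive definition of $\delta_n$, the objects $\delta_n(\mathbf{m})$ and $\delta_n(\hat{\mathbf{m}})$ coincide in $\Theta_n$. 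The Comp generators are likewise pulled back essentially to elements of $\mathrm{Comp}_{\Theta_n}$. For the reverse containment $\delta_n^*(T_{\Theta_n}) \subseteq T_{\Delta^{\!\times n}}$, the key tool is Proposition~\ref{prop:ThetaNSegalAreRetractsOfDeltaNSegal}, which exhibits every $f \in \mathrm{Segal}_{\Theta_n}$ as a retract of $(\delta_n)_!(f')$ for some $f' \in \mathrm{Segal}_{\Delta^{\!\times n}}$. Applying $\delta_n^*$ makes $\delta_n^*(f)$ a retract of $\delta_n^*(\delta_n)_!(f')$; but by Lemma~\ref{lma:DeltaNR3} the unit $\id \to \delta_n^*(\delta_n)_!$ is a $T_{\Delta^{\!\times n}}$-equivalence on representables, and since both endofunctors commute with colimits, it is so on all objects. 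Hence $\delta_n^*(\delta_n)_!(f') \in T_{\Delta^{\!\times n}}$, and therefore so is its retract $\delta_n^*(f)$. A parallel retract-style argument, together with induction on $n$, handles the completeness generators $\mathrm{Comp}_{\Theta_n}$.

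The main obstacle will be the careful bookkeeping required for the Comp generators in both directions: expressing each image as an iterated colimit of generators of the receiving saturated class requires a concrete analysis of how the simplicial set $K$ of Notation~\ref{ntn:simplicialsetK} interacts with suspension via $\delta_n$, along lines parallel to Lemma~\ref{lma:NonSuspensionSegalIsInTB+TC}.
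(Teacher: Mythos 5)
Your proposal follows essentially the same route as the paper: both verify (R.1)--(R.4) via Lemma~\ref{lma:DeltaNR3} for (R.3), reduce (R.1) and (R.2) to the containments $\delta_n^*(T_{\Theta_n}) \subseteq T_{\Delta^{\!\times n}}$ and $(\delta_n)_!(T_{\Delta^{\!\times n}}) \subseteq T_{\Theta_n}$ using Theorem~\ref{thm:ThetanR1} and Lemma~\ref{lma:ThetanR2}, and establish the first containment by combining Proposition~\ref{prop:ThetaNSegalAreRetractsOfDeltaNSegal} with the fact (from Lemma~\ref{lma:DeltaNR3}) that $\delta_n^*(\delta_n)_!$ carries $T_{\Delta^{\!\times n}}$ back into itself. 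The only cosmetic difference is that you anticipate more work for the Segal and Comp generators under $(\delta_n)_!$ than is needed: the paper simply observes that $(\delta_n)_!(\mathrm{Segal}_{\Delta^{\!\times n}})$ lands inside $\mathrm{Segal}_{\Theta_n}$ and that $\mathrm{Comp}_{\Theta_n}$ is by construction the image of $\mathrm{Comp}_{\Delta^{\!\times n}}$.
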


\begin{proof}
	We will show that the triple $(\Delta^{\times n}, T_{\Delta^{\times n}}, d)$ satisfies conditions (R.1-4) of Th.~\ref{Thm:RecognitionForPresheaves}. Condition (R.4) clearly holds. Condition (R.3) is the statement of Lemma \ref{lma:DeltaNR3}. 
	
	For condition (R.2) we must show that $i_! (\delta_n)_!(T_{\Delta^{\times n}}) \subseteq S$. By Lemma \ref{lma:ThetanR2} it is sufficient to show that $(\delta_n)_!(T_{\Delta^{\times n}}) \subseteq T_{\Theta_n}$, and as $(\delta_n)_!$ preserves colimits it is sufficient to check this on the generating classes $\mathrm{Segal}_{\Delta^{\!\times n}}$, $\mathrm{Glob}_{\Delta^{\!\times n}}$, and $\mathrm{Comp}_{\Delta^{\!\times n}}$. In each case this is clear: the set $\mathrm{Glob}_{\Delta^{\!\times n}}$ maps under $(\delta_n)_!$ to equivalences in $\pre(\Theta_n)$, the set $\mathrm{Comp}_{\Delta^{\!\Theta_n}}$ is constructed as the image of $\mathrm{Comp}_{\Delta^{\!\times n}}$ under $(\delta_n)_!$, and the image of $\mathrm{Segal}_{\Delta^{\!\times n}}$ under $(\delta_n)_!$ is a subset of $\mathrm{Segal}_{\Delta^{\!\Theta_n}}$.
	
For the final condition (R.1) we must show that $\delta_n^* i^*(S) \subseteq T_{\Delta^{\times n}}$. By Th.~\ref{thm:ThetanR1}, it suffices to show that $\delta_n^*(T_{\Theta_n}) \subseteq T_{\Delta^{\times n}}$. As $\delta_n^*$ preserves colimits, it is sufficient to prove this for the generating class of $T_{\Theta_n}$. 
As we previously mentioned, the set $\mathrm{Comp}_{\Delta^{\!\Theta_n}}$ consists of elements in the image of $(\delta_n)_!$. By Proposition \ref{prop:ThetaNSegalAreRetractsOfDeltaNSegal} the remaining generators of $T_{\Theta_n}$ are retracts of maps in the image of $(\delta_n)_!$. Thus $T_{\Theta_n}$ is contained in the strongly saturated class generated from $(\delta_n)_!(T_{\Delta^{\times n}})$. 
Hence $\delta_n^*(T_{\Theta_n})$ is contained in the strongly saturated class generated by $\delta_n^* (\delta_n)_!(T_{\Delta^{\times n}})$. Using Lemma \ref{lma:DeltaNR3} one readily deduces that the generators of $T_{\Delta^{\times n}}$ are mapped, via  
$\delta_n^* (\delta_n)_!$ back into $T_{\Delta^{\times n}}$. 
As the composite functor $\delta_n^* (\delta_n)_!$ preserves colimits, this implies that the saturated class generated by $\delta_n^* (\delta_n)_!(T_{\Delta^{\times n}})$ is contained in $T_{\Delta^{\times n}}$, whence $\delta_n^*(T_{\Theta_n}) \subseteq T_{\Delta^{\times n}}$.
\end{proof}

\begin{corollary}\label{cor:RezkequalsCSSn}
	The functor $\CSS(\Theta_n)\to\CSS(\Delta^{\!\times n})$ induced by $\delta_n$ is an equivalence of $\infty$-categories.
\end{corollary}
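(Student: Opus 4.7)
The plan is to deduce this from Theorem \ref{Thm:RecognitionForPresheaves} applied to both triples $(\Theta_n, T_{\Theta_n}, i)$ and $(\Delta^{\!\times n}, T_{\Delta^{\!\times n}}, i\circ\delta_n)$, together with a simple two-out-of-three argument. The key point is that the functor $\delta_n^{\star}\colon\pre(\Theta_n)\to\pre(\Delta^{\!\times n})$ descends to the localizations.

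First, I would check that $\delta_n^{\star}$ descends to a well-defined functor $\CSS(\Theta_n)\to\CSS(\Delta^{\!\times n})$. This amounts to verifying that $\delta_n^{\star}$ carries $T_{\Theta_n}$-local objects to $T_{\Delta^{\!\times n}}$-local objects, which by adjunction is equivalent to the containment $(\delta_n)_!(T_{\Delta^{\!\times n}})\subseteq T_{\Theta_n}$. This containment was established during the verification of axiom (R.2) in the proof of Theorem \ref{thm:CSSnisathy}; indeed, one checked there that $(\delta_n)_!$ sends each of the three generating classes $\mathrm{Segal}_{\Delta^{\!\times n}}$, $\mathrm{Glob}_{\Delta^{\!\times n}}$, and $\mathrm{Comp}_{\Delta^{\!\times n}}$ into $T_{\Theta_n}$, and then used saturation.

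Next, by Theorem \ref{thm:Rezksmodel}, the triple $(\Theta_n,T_{\Theta_n},i)$ satisfies axioms (R.1--4), so Theorem \ref{Thm:RecognitionForPresheaves} gives an equivalence
\begin{equation*}
i^{\star}\colon\cat_{(\infty,n)}\xrightarrow{\ \simeq\ }\CSS(\Theta_n).
\end{equation*}
Similarly, by Theorem \ref{thm:CSSnisathy}, the triple $(\Delta^{\!\times n},T_{\Delta^{\!\times n}},i\circ\delta_n)$ satisfies axioms (R.1--4), so Theorem \ref{Thm:RecognitionForPresheaves} yields an equivalence
\begin{equation*}
(i\circ\delta_n)^{\star}=\delta_n^{\star}\circ i^{\star}\colon\cat_{(\infty,n)}\xrightarrow{\ \simeq\ }\CSS(\Delta^{\!\times n}).
\end{equation*}

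Finally, since both $i^{\star}$ and $\delta_n^{\star}\circ i^{\star}$ are equivalences of quasicategories, the two-out-of-three property forces $\delta_n^{\star}\colon\CSS(\Theta_n)\to\CSS(\Delta^{\!\times n})$ to be an equivalence as well. I do not expect any genuine obstacle here; the real work has already been carried out in Theorems \ref{thm:Rezksmodel} and \ref{thm:CSSnisathy}. The only point requiring a modicum of care is the bookkeeping at the first step, namely that the two ``recognition'' equivalences sit in a triangle over $\cat_{(\infty,n)}$ whose third edge is precisely the functor induced by $\delta_n$; but this commutativity is immediate from the compatibility $(i\circ\delta_n)^{\star}=\delta_n^{\star}\circ i^{\star}$ of pullback with composition.
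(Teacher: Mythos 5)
Your argument is correct and is exactly the intended one: the paper states this as an immediate corollary of Theorems \ref{thm:Rezksmodel} and \ref{thm:CSSnisathy} with no written proof, the point being precisely that $(i\circ\delta_n)^{\star}=\delta_n^{\star}\circ i^{\star}$ exhibits the induced functor as the comparison of two recognition equivalences out of $\cat_{(\infty,n)}$, so two-out-of-three applies. Your added check that $\delta_n^{\star}$ descends to the localizations (via $(\delta_n)_!(T_{\Delta^{\!\times n}})\subseteq T_{\Theta_n}$, already established in the (R.2) step) is the right piece of bookkeeping.
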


\section{Epilogue: Model categories of $(\infty,n)$-categories}
\label{sect:models}

We conclude with a brief discussion of model categories of $(\infty,n)$-categories, in which we describe some interactions between our results here and those of Bergner, Lurie, Rezk, and Simpson. We first note that a spate of further corollaries to our main results can be obtained by employing the following.

\begin{construction} Suppose $\A$ a category equipped with a subcategory $w\A$ that contains all the objects of $\A$ (i.e., a \emph{relative category} in the terminology of \cite{BarKan1011}). We call the morphisms of $w\A$ \emph{weak equivalences}. In this situation, one may form the \emph{hammock localization} $\LH\A$ of Dwyer--Kan \cite{MR81h:55019}; this is a simplicial category. One may apply to each mapping space a fibrant replacement $R$ that preserves products (e.g., $\mathrm{Ex}^{\infty}$) to obtain a category enriched in \emph{Kan complexes}, which we shall denote $\LH_f\A$. We may now apply the simplicial nerve construction \cite[1.1.5.5]{HTT} to obtain a $\infty$-category $\mathrm{N}\LH_f\A$, which we shall denote simply by $\NH\A$. We shall call $\NH\A$ the \emph{$\infty$-category underlying} the relative category $\A$.

When $\A$ is a simplicial model category, the simplicial localization $\LH\A$ is equivalent \cite{MR81m:55018} to the full sub-simplicial category $\A^{\circ}$ spanned by the cofirant-fibrant objects. In this case, our $\NH\A$ is equivalent to $\mathrm{N}\A^{\circ}$, as used by Lurie \cite[A.2]{HTT}.
\end{construction}

\begin{remark}\label{rmk:QEsyieldLEs} When $\A$ and $\B$ are model categories, and $F\colon\A\rightleftarrows\B\colon G$ is a Quillen equivalence between them, there is \cite{MR81m:55018} an induced equivalence of hammock localizations $\LH\A\simeq\LH\B$, and thus of underlying $\infty$-categories $\NH\A\simeq\NH\B$.
\end{remark}

\begin{example}
	The $\infty$-category underlying the relative category of $n$-relative categories \cite{BarKan1102} is a theory of $(\infty,n)$-categories.
\end{example}

\begin{definition} Let us call a 
	 model category $\A$ a \emph{model category of $(\infty,n)$-categories} if its underlying $\infty$-category $\NH\A$ is a theory of $(\infty,n)$-categories.
\end{definition}

\begin{example} By \cite{JT}, the Joyal model category of simplicial sets is a model category of $(\infty,1)$-categories. More generally, all of the following model categories are model categories of $(\infty,1)$-categories:
\begin{enumerate}
\item the Joyal model category of quasicategories $\mathrm{QCat}$ \cite{MR0420609,Joyal,HTT},
\item the Rezk model category of complete Segal spaces $\CSS$ \cite{MR1804411},
\item the Bergner model category of simplicial categories \cite{MR81h:55018,MR2276611},
\item the Tamsamani--Hirschowitz--Simpson--Pellissier model categories of Segal Categories \cite{MR984042,math.AG/9807049,math.AT/0308246,MR2341955,simpson:book,MR2321038},
\item the Barwick--Kan model category of relative categories \cite{BarKan1011}.
\end{enumerate}
\end{example}

\begin{example}
	By Th.~\ref{thm:Rezksmodel} and Th.~\ref{thm:CSSnisathy}, both Rezk's model category $\Theta_nSp$
	of complete $\Theta_n$-spaces \cite{Rezk} and the model category of $n$-fold complete Segal spaces \cite{barwick-thesis,G} are model categories of $(\infty,n)$-categories.
\end{example}

\noindent We may now use the construction above to find more examples of theories of $(\infty,n)$-categories.


\begin{example}[{\cite[Proposition 1.5.4]{G}}] Let $\M$ be a left proper simplicial combinatorial model category which is an {absolute distributor} (\cite[Definition 1.5.1]{G}). Then the category $\Fun(\Delta^\op, \M)$, of simplicial objects in $\M$, admits the $\M$-enriched complete Segal model structure $\CSS_{\M}$, which is again left proper, simplicial, combinatorial, and an absolute distributor. If  $\M$ is a model category of of $(\infty,n-1)$-categories, then $\CSS_{\M}$ is a model category of $(\infty,n)$-categories.
\end{example}

\noindent The condition of being an {\em absolute distributor} is needed in order to formulate the correct notion of {\em complete} $\M$-enriched Segal object. We refer the reader to \cite{G} for details, but note that being an absolute distributor is a property of the underlying $\infty$-category of the given model category. In particular it is preserved under any Quillen equivalence. 

\begin{example} 
	Suppose that $\M$ is a model category satisfying the following list of conditions.
\begin{enumerate}[{\hspace{\parindent}(M.}1{)}]
\item The class of weak equivalences of $\M$ are closed under filtered colimits.
\item Every monomorphism of $\M$ is a cofibration.
\item For any object $Y$ of $\M$, the functor $X\mapsto X\times Y$ preserves colimits.
\item For any cofibrations $f\colon X\to Y$ and $f'\colon X'\to Y'$, the \emph{pushout product}
\begin{equation*}
f\Box f'\colon(X\times Y')\cup^{(X\times X')}(Y\times X')\to Y\times Y'
\end{equation*}
is a cofibration that is trivial if either $f$ or $f'$ is.
\item The $\infty$-category $\NH\M$ is a homotopy theory of $(\infty,n-1)$-categories.
\end{enumerate}

\noindent Work of Bergner \cite{MR2321038} and Lurie \cite{G}, combined with \ref{thm:CSSnisathy} above, shows that each of the following is an example of a model category of $(\infty,n)$-categories:
\begin{itemize}
\item the projective or (equivalently) the injective model category \cite[2.2.16, 2.3.1, 2.3.9]{G} $\Seg_{\M}$ of $\M$-enriched preSegal categories, and
\item the model category \cite[A.3.2]{HTT} $\cat_{\M}$ of categories enriched in $\M$.
\end{itemize}
Moreover, following Simpson \cite{simpson:book} the injective (\textsc{aka} Reedy) model category of Segal $(n-1)$-categories \cite{math.AG/9807049,math.AT/0308246,simpson:book} satisfies conditions (M.1-4); indeed, the most difficult of these to verify is (M.4), which Simpson does in \cite[Th.~19.3.2 (using Corollary 17.2.6)]{simpson:book}. 
\end{example}


Thus, for example, the injective and projective model categories of $\Theta_nSp$-enriched Segal categories $\Seg_{\Theta_nSp}$ as well as the model category $\cat_{\Theta_nSp}$ of categories enriched in $\Theta_nSp$ are seen to be model categories of $(\infty,n)$-categories. Indeed very recent work of Bergner and Rezk \cite{1204.2013} discusses these model categories in detail and links them by an explicit chain of Quillen equivalences.

Additionally, we see that the injective model category of Segal $n$-categories is also a model category of $(\infty,n)$-categories, as is the model category of categories enriched in Segal $(n-1)$-categories. 


A partial converse to \ref{rmk:QEsyieldLEs} holds, which allows one to deduce Quillen equivalences between these various model categories.

\begin{lemma}[{\cite[A.3.7.7]{HTT}}] Two combinatorial model categories $\A$ and $\B$ are connected by a chain of Quillen equivalences if and only if $\NH\A$ and $\NH\B$ are equivalent $\infty$-categories.
\end{lemma}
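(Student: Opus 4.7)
The forward implication is essentially already recorded. If $\A$ and $\B$ are connected by a finite chain of Quillen equivalences, then by the theorem of Dwyer--Kan \cite{MR81m:55018} each such Quillen equivalence induces an equivalence of hammock localizations, hence of underlying quasicategories (cf.\ Remark~\ref{rmk:QEsyieldLEs}). Composing along the chain produces an equivalence $\NH\A \simeq \NH\B$.

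For the converse, the key input is Dugger's presentation theorem \cite{MR1870515}: every combinatorial model category $\A$ is Quillen equivalent to a left Bousfield localization $U_{\C}/S_\A$ of the projective model category $U_{\C}$ of simplicial presheaves on some small (simplicial) category $\C$ with respect to some small set $S_\A$ of morphisms. The underlying quasicategory $\NH(U_\C/S_\A)$ is then the accessible localization $T_\A^{-1}\pre(\mathrm{N}\C)$, where $T_\A$ is the strongly saturated class generated by $S_\A$. Hence the passage from combinatorial model categories to presentable quasicategories corresponds, up to Quillen equivalence, to the passage from presentations $(\C, S)$ to accessible localizations of presheaf quasicategories.

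Given a second combinatorial model category $\B$ and a Dugger presentation $\B \rightsquigarrow U_{\D}/S_\B$, the plan is to exhibit a common presentation. Set $\mathcal{E} = \C \sqcup \D$ and consider the projective model category $U_\mathcal{E}$. By the universal property of $U_\mathcal{E}$, there are canonical Quillen pairs relating it to $U_\C$ and $U_\D$; using these one can enlarge $S_\A$ (respectively, $S_\B$) to a set $S_\A'$ (respectively, $S_\B'$) of morphisms of $U_\mathcal{E}$, producing Quillen equivalences $U_\C/S_\A \rightsquigarrow U_\mathcal{E}/S_\A'$ and $U_\D/S_\B \rightsquigarrow U_\mathcal{E}/S_\B'$. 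Under the identification $\NH U_\mathcal{E} \simeq \pre(\mathrm{N}\mathcal{E})$, the classes $S_\A'$ and $S_\B'$ generate strongly saturated classes $T_\A'$ and $T_\B'$; the hypothesized equivalence $\NH\A \simeq \NH\B$ of accessible localizations of $\pre(\mathrm{N}\mathcal{E})$ forces $T_\A' = T_\B'$. Since the Bousfield localization of a left proper, combinatorial model category at a strongly saturated class is determined (as a model category, not merely up to Quillen equivalence) by that class, we obtain $U_\mathcal{E}/S_\A' = U_\mathcal{E}/S_\B'$, yielding the zigzag
\begin{equation*}
\A \rightsquigarrow U_\C/S_\A \rightsquigarrow U_\mathcal{E}/S_\A' = U_\mathcal{E}/S_\B' \rightsquigarrow U_\D/S_\B \rightsquigarrow \B.
\end{equation*}

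The main obstacle is the middle step: enlarging a Dugger presentation along a functor of indexing categories and verifying that the extension is Quillen equivalent to the original, while translating the abstract equivalence of underlying quasicategories into the concrete equality of strongly saturated classes inside a presheaf category. This rests on the universal property of $U_\C$ (as the free homotopy-colimit cocompletion of $\C$) and on the characterization of Bousfield localizations of combinatorial model categories in terms of their classes of local equivalences --- both of which are available in the framework of \cite[A.3]{HTT}.
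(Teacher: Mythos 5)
The paper offers no proof of this lemma at all --- it is quoted directly from \cite[A.3.7.7]{HTT} --- so your argument is really being measured against Lurie's. Your forward direction is fine and is exactly Remark~\ref{rmk:QEsyieldLEs}. The converse, however, has a genuine gap at the assertion that the equivalence $\NH\A\simeq\NH\B$ ``forces $T_\A'=T_\B'$.'' An abstract equivalence between two accessible localizations of $\pre(\mathrm{N}\E)$ carries no compatibility whatsoever with the two localization functors out of $\pre(\mathrm{N}\E)$, and therefore cannot identify the corresponding strongly saturated classes: those classes are exactly the morphisms inverted by the respective localization functors, which are different colimit-preserving functors determined by different restrictions to $\E$. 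Concretely, let $\E$ be the discrete category with two objects, so $\pre(\mathrm{N}\E)\simeq\mathcal{S}\times\mathcal{S}$; localizing at $\emptyset\to j(a)$ and localizing at $\emptyset\to j(b)$ both yield $\mathcal{S}$, yet the two saturated classes (maps whose $b$-component, respectively $a$-component, is an equivalence) are plainly unequal. Your preliminary step already breeds this ambiguity, since $\pre(\mathrm{N}(\C\sqcup\D))\simeq\pre(\mathrm{N}\C)\times\pre(\mathrm{N}\D)$ and one recovers $\pre(\mathrm{N}\C)$ only by localizing away the $\D$-factor.

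The missing idea is to use the given equivalence $\phi\colon\NH\A\to\NH\B$ as data rather than trying to deduce an equality of saturated classes after the fact. After Dugger, write $\A\simeq S_\A^{-1}U_\C$, so that $\NH\A\simeq T_\A^{-1}\pre(\mathrm{N}\C)$ with localization functor $L$. The composite $\mathrm{N}\C\to\pre(\mathrm{N}\C)\to\NH\A\to\NH\B$ lands in the underlying quasicategory of the simplicial model category $\B$, hence can be rectified to a simplicial functor from (a cofibrant replacement of) $\C$ to $\B^{\circ}$; the universal property of the projective model structure on simplicial presheaves extends this to a left Quillen functor $U_\C\to\B$ whose left derived functor is equivalent to $\phi\circ L$ (both preserve colimits and agree on representables). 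Since $\phi\circ L$ inverts $S_\A$, this left Quillen functor descends to the Bousfield localization, giving a single Quillen adjunction $S_\A^{-1}U_\C\rightleftarrows\B$ inducing the equivalence $\phi$ on underlying quasicategories, hence a Quillen equivalence; the chain is then $\A\rightsquigarrow S_\A^{-1}U_\C\rightarrow\B$. It is this rectification of a quasicategorical functor to a point-set left Quillen functor --- not a common presentation with literally equal classes of local equivalences --- that makes the converse go through.
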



From this it follows that if $\A$ and $\B$ are combinatorial model categories with the property that both $\NH\A$ and $\NH\B$ are theories of $(\infty,n)$-categories, then $\A$ and $\B$ are connected by a chain of Quillen equivalences. This applies to all of the model categories of $(\infty,n)$-categories mentioned above. 

A zig-zag of Quillen equivalences can be a troublesome gadget to work with. It is usually far more informative to have a single direct and explicit Quillen equivalence between competing model categories of $(\infty,n)$-categories. While our techniques do not generally provide such a direct Quillen equivalence, we do offer the following recognition principle. 

\begin{proposition}\label{prop:cellsdetectquillenequiv}
	Let $\A$ and $\B$ be two model categories of $(\infty,n)$-categories and let $L: \A \leftrightarrows \B: R$ be a Quillen adjunction between them. Then $(L,R)$ is a Quillen equivalence if and only if the left derived functor $\NH L: \NH \A \to \NH \B$ preserves the cells up to weak equivalence. 
\end{proposition}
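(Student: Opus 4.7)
The forward direction is immediate: a Quillen equivalence $(L,R)$ induces an equivalence $\NH L\colon\NH\A\to\NH\B$ of underlying quasicategories, which trivially preserves cells up to equivalence. For the converse, recall that $(L,R)$ is a Quillen equivalence if and only if $\NH L$ is an equivalence of quasicategories (via the derived unit/counit criterion); so assuming $\NH L$ preserves cells, our task reduces to showing that $\NH L$ is an equivalence.

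Fix fully faithful embeddings $f_{\A}\colon\Upsilon_n\hookrightarrow\NH\A$ and $f_{\B}\colon\Upsilon_n\hookrightarrow\NH\B$ witnessing the respective axioms (C.1--4). Since $L$ is left Quillen, $\NH L$ is a colimit-preserving left adjoint between presentable quasicategories, and the hypothesis supplies a natural equivalence $\NH L\circ f_{\A}|_{\G_n}\simeq f_{\B}|_{\G_n}$ as functors $\G_n\to\NH\B$. By axiom (C.3) together with Lm.~\ref{lma:fiberproductsareColimits}, every object of $\Upsilon_n$ can be expressed as an iterated colimit of cells in any homotopy theory of $(\infty,n)$-categories. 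Inducting along the filtration $\{\Upsilon_n^{(k)}\}$ of Pr.~\ref{prop:FiltrationOfUpsilon} and invoking the colimit-preservation of $\NH L$, we upgrade the comparison to a natural equivalence $\NH L\circ f_{\A}\simeq f_{\B}$ on all of $\Upsilon_n$.

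Axiom (C.4) applied with $\C=\NH\A$ and target datum $(\NH\B,f_{\B})$ now supplies a localization $\Phi\colon\NH\A\to\NH\B$ with $\Phi\circ f_{\A}\simeq f_{\B}$. Since both $\NH\A$ and $\NH\B$ are homotopy theories of $(\infty,n)$-categories and $\thy_{(\infty,n)}$ is connected (Rmk.~\ref{Rmk:SpaceOfTheoreisConnected}), the localization $\Phi$ is in fact an equivalence. Then $\NH L$ and $\Phi$ are two colimit-preserving functors $\NH\A\to\NH\B$ whose restrictions to $\Upsilon_n$ are naturally equivalent. By Cor.~\ref{cor:CellsGenerate} the cells (hence also $\Upsilon_n$) strongly generate $\NH\A$ under colimits; Lm.~\ref{lma:stronggenretract} then yields a natural equivalence $\NH L\simeq\Phi$, so $\NH L$ is an equivalence of quasicategories, as desired.

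The main obstacle is the step in the second paragraph, namely promoting the pointwise cell-preservation hypothesis to a natural equivalence on all of $\Upsilon_n$. This rests on the realization of $\Upsilon_n$ as a colimit-closure of $\G_n$ subject to the fiber-product relations encoded by axioms (C.2) and (C.3); once this extension is in hand, the strong generation of $\NH\A$ by cells together with the universal property of left Kan extension (Lm.~\ref{lma:stronggenretract}) reduces the comparison with $\Phi$ to a routine bookkeeping step.
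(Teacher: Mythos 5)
Your proposal is correct and follows the same strategy as the paper's (much terser) proof: the converse rests on the fact that $\NH L$ preserves colimits and that both quasicategories are generated under colimits by the cells (Cor.~\ref{cor:CellsGenerate}); your detour through the universal localization $\Phi$ supplied by Axiom (C.4), followed by the comparison $\NH L\simeq\Phi$ via strong generation (Lm.~\ref{lma:stronggenretract}), is a reasonable way of making the paper's one-line deduction precise. The one point you under-justify is the forward direction: it is \emph{not} trivial that an abstract equivalence of quasicategories $\NH\A\to\NH\B$ carries the object designated as the $k$-cell of $\NH\A$ to the $k$-cell of $\NH\B$, since a priori it could permute objects arbitrarily. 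The paper handles this by citing Cor.~\ref{cor:CellsDetect} and Lm.~\ref{lma:CellsPreservedByEquiv}: the induced equivalence on $0$-truncated objects is an equivalence $\gaunt_n\to\gaunt_n$, and the cells are characterized intrinsically (terminal object, boundary colimits, endomorphism counts), so any such equivalence preserves them up to isomorphism. You should replace ``trivially'' with an appeal to that lemma; with that emendation the argument is complete.
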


\begin{proof}
	A Quillen equivalence induces an equivalence $\NH L: \NH \A \to \NH \B$ of $\infty$-categories. By
	Lemma \ref{cor:0-TruncatedAreGaunt} and Lemma \ref{lma:CellsPreservedByEquiv} any such equivalence necessarily preserves the cells up to equivalence. Conversely, as the left-derived functor $\NH L: \NH \A \to \NH \B$ preserves (homotopy) colimits and  $\NH \A$ and $\NH \B$ are generated under (homotopy) colimits by the cells (Axiom C.2), it follows that $\NH L$ induces an equivalence of $\infty$-categories. In particular it induces an equivalence of homotopy categories, and hence $(L,R)$ is a Quillen equivalence. 
\end{proof}

In particular the above applies when the cells are fibrant-cofibrant objects of $\A$ and $\B$ which are preserved by $L$ itself.

\begin{example}
	The standard Quillen adjunction (cf. \cite[Lemma 2.3.13]{G}) from Segal $n$-categories to $n$-fold complete Segal spaces  is a Quillen equivalence. 
\end{example}

\begin{example} The functor $\delta_n$ induces a Quillen equivalence between the model category of complete Segal $\Theta_n$-spaces \cite{Rezk} and the model category of $n$-fold complete Segal spaces \cite[1.5.4]{G}. (See also Bergner--Rezk \protect{\cite{bergrezk}}).
\end{example}

A category with a specified subcategory of weak equivalences is a {\em relative category}, and hence gives rise to a homotopy theory. Thus any theory of $(\infty,n)$-categories arising this way may, in principle, be compared using our axioms. We therefore end with the following.

\begin{conjecture}
 The $\infty$-category underlying Verity's $n$-trivial weak complicial sets \cite{MR2450607, MR2342841} is a homotopy theory of $(\infty, n)$-categories. The relative category consisting of Batanin's $\omega$-categories \cite{MR1623672} such that every $k$-cell is an equivalence for $k>n$, together with the class of morphisms which are essentially $k$-surjective for all $k$ is a theory of $(\infty,n)$-categories. 
\end{conjecture}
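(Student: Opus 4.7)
Since both parts of the conjecture assert that certain relative categories give rise to homotopy theories of $(\infty,n)$-categories, the cleanest approach is to avoid verifying axioms (C.1--C.4) from scratch and instead compare each model with one of the two presheaf models already shown (in Theorems~\ref{thm:Rezksmodel} and~\ref{thm:CSSnisathy}) to satisfy the axioms. Concretely, the plan is to construct, for each model, a functor $i$ from a suitable small category $\R$ (namely $\Theta_n$ or $\Delta^{\times n}$) into the underlying relative category, extend via Kan extension to a Quillen-type adjunction on simplicial presheaves, and then apply the recognition principle of Theorem~\ref{Thm:RecognitionForPresheaves}. By Corollary~\ref{cor:RezkequalsCSSn}, it is immaterial which of the two presheaf models one uses as the source of the comparison.

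\textbf{Verity's $n$-trivial weak complicial sets.} The key input here is Verity's \emph{stratified nerve} functor $N\colon\cat_\omega\to\mathrm{Strat}$ from (strict) $\omega$-categories to stratified simplicial sets, which restricts to a fully faithful functor on $\Theta_n$ landing in $n$-trivial complicial sets. First I would form the composite $i\colon\Theta_n\hookrightarrow\cat_\omega\xrightarrow{N}\mathrm{Strat}_n^{nt}$, and verify that the image of each cell $C_k$ is a fibrant object representing the $k$-cell in Verity's model. Conditions (R.3) and (R.4) are then immediate from fully-faithfulness and the identification of cells. Condition (R.2) reduces to showing that $i_!$ sends the Segal and completeness maps of $\mathcal{T}_{n,\infty}$ to weak equivalences of $n$-trivial complicial sets, which is the content of Verity's own ``Street--Roberts'' style calculation that the nerve preserves these pastings. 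The hard step is (R.1): one must show that every generating trivial cofibration of Verity's model structure (horn-fillings indexed by the thin simplices), when pulled back along $i$, lies in the saturation $T_{\Theta_n}$. I would attempt this by restricting to the cellular generators, applying Lemma~\ref{lma:NewR1}, and using the explicit stratified-horn combinatorics to exhibit each pulled-back map as an iterated pushout of Segal and completeness maps pulled back over a cell.

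\textbf{Batanin's $\omega$-categories with $k$-equivalences for $k>n$.} Here the strategy is to pass through the strict nerve. Let $\cat_\omega^{(n)}$ denote the full subcategory of $\omega$-categories in which every $k$-cell is an equivalence for $k>n$, equipped with the class $W$ of essentially $k$-surjective morphisms (for all $k$). The inclusion $\Theta_n\hookrightarrow\cat_\omega^{(n)}$ lands in cofibrant generators, and I would show that the induced functor $\NH(\cat_\omega^{(n)},W)\to\CSS(\Theta_n)$ obtained by restricting presheaves along $\Theta_n\hookrightarrow\cat_\omega^{(n)}$ is an equivalence of quasicategories. The candidate inverse is the left Kan extension. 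To apply Theorem~\ref{Thm:RecognitionForPresheaves}, condition (R.4) holds by construction, (R.3) holds because $\Theta_n\hookrightarrow\cat_\omega^{(n)}$ is fully faithful, and (R.2) reduces to the assertion that the nerve of each pasting diagram arising in the Segal maps and completeness maps is a weak equivalence in the above sense --- these are, after unwinding, exactly the statements that pasting composition is unique up to higher equivalence and that $E$ is contractible, both standard for Batanin's theory.

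\textbf{Principal obstacle.} In both cases the genuinely nontrivial step is (R.1): showing that the nerve functor is \emph{conservative on the localized quasicategory} --- equivalently, that no extra weak equivalences are created by the geometry of stratified simplices (in Verity's case) or by the free $\omega$-operad relations (in Batanin's case) beyond those already present in $T_{\Theta_n}$. For Verity this is an elaborate combinatorial exercise in stratified horn-filling, of the same flavour as Lemma~\ref{lma:NonSuspensionSegalIsInTB+TC} but with thin-markings; for Batanin it requires a theorem of the form ``a strict functor of cofibrant $\omega$-categories is an essentially $k$-surjective equivalence iff its $\Theta_n$-nerve is a complete Segal equivalence,'' which, while expected, does not appear in the literature in a form ready to be cited. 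Establishing this comparison on weak equivalences is the step on which the whole plan turns.
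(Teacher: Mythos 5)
The statement you are addressing is stated in the paper as a \emph{conjecture}; the paper offers no proof of it, and the reason is precisely the step you flag at the end of your proposal. Your outline is a sensible strategy, but it is a strategy, not a proof: in both cases the entire mathematical content is concentrated in the comparison of weak equivalences (your ``principal obstacle''), and you acknowledge that the required statements --- that Verity's thin-horn anodyne maps pull back into $T_{\Theta_n}$, and that a strict functor of Batanin $\omega$-categories is essentially $k$-surjective for all $k$ if and only if its nerve is a $T_{\Theta_n}$-equivalence --- are not available in the literature and are not supplied here. Until those are established, nothing has been proved.

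There is also a structural mismatch in the way you invoke Theorem~\ref{Thm:RecognitionForPresheaves}. That theorem takes as input a small ordinary category $\R$, a functor $i\colon\R\to\Upsilon_n$, and a strongly saturated class $T$ of morphisms of $\pre(\R)$, and concludes that the localization $T^{-1}\pre(\R)$ is a homotopy theory of $(\infty,n)$-categories. To apply it to Verity's or Batanin's model you must first exhibit the underlying quasicategory of that relative category as such a localization of a presheaf quasicategory on a small category admitting a functor to $\Upsilon_n$. For $n$-trivial weak complicial sets this means working with presheaves on the stratified simplex category and producing the functor to $\Upsilon_n$ and the saturated class there --- not, as you propose, mapping $\Theta_n$ \emph{into} the complicial model, which runs the adjunction in the wrong direction relative to the hypotheses (R.1)--(R.4). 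For Batanin's $\omega$-categories the situation is worse: these are algebras over a globular operad rather than presheaves, so the recognition theorem does not apply without first constructing a presheaf presentation of the relative category (or else verifying axioms (C.1)--(C.4) directly, e.g.\ by checking strong generation and the internal homs for correspondences by hand). Either route requires substantial new input beyond what you have written down.
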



\bibliographystyle{amsplain}
\bibliography{SpaceOfTheories}

\end{document}